\newtheorem{rem}{Remark}[section]
\newtheorem{prop}{Proposition}[section]
\newtheorem{propri}{Property}[section]
\newtheorem{defi}{Definition}[section]
\newtheorem{lem}{Lemma}[section]
\newtheorem{thm}{Theorem}[section]
\def\ds{\displaystyle}
\newcommand{\vphi}{\varphi}
\newcommand{\eps}{\varepsilon}
\newcommand{\R}{\mathbb{R}}
\newcommand\be{\begin{equation}}
\newcommand\ee{\end{equation}}
\numberwithin{equation}{section}
\title{Stability of the chemostat system with a mutation factor}
\date{\today}
\author{T. Bayen\thanks{Avignon Universit\'e, Laboratoire de Math\'ematiques d'Avignon (EA 2151), F-84018 Avignon, France. {\tt terence.bayen@univ-avignon.fr}} , H. Cazenave-Lacroutz\thanks{Avignon Universit\'e, Laboratoire de Math\'ematiques d'Avignon (EA 2151), F-84018 Avignon, France. {\tt henri.cazenave-lacroutz@univ-avignon.fr}} , J. Coville\thanks{UR 546 Biostatistique et Processus Spatiaux, INRAE, Domaine St Paul Site Agroparc, F-84000 Avignon, France. 
{\tt jerome.coville@inrae.fr}}}
\begin{document}
\maketitle
\begin{abstract} In this paper, we consider a resource-consumer model taking into account a mutation effect between species (with constant mutation rate). 
The corresponding mutation operator is a discretization of the Laplacian in such a way 
that the resulting dynamical system can be viewed as a regular perturbation of the classical chemostat system. 
We prove the existence of a unique locally stable steady-state for every value of the mutation rate and every value of the dilution rate not exceeding a critical value. 
In addition, we give an expansion of the steady-state in terms of the mutation rate and we prove a uniform persistence property of the dynamics related to each species. 
Finally, we show that this equilibrium is globally asymptotically stable for every value of the mutation rate 
provided that the dilution rate is with small enough values. 
\end{abstract}
{\bf{ Keywords}} : chemostat system, population dynamics, dynamical system, regular perturbation, global stability. 
\section{Introduction}
The chemostat system was introduced in the fifties to model the behavior of bacteria competing for a same substrate (see \cite{Monod1,Monod2,szilard}). 
It has now become a reference model for the modeling of ecosystems (lakes, rivers, microalgae,...), see, {\it{e.g.}}, \cite{GG66}, 
and it is  widely used in biotechnology, for instance, for the control of the production of microalgae of interest or in waste water treatment (see, {\it{e.g.}}, \cite{Bastin90,dochain1,BHM,BMMS} and references herein). The chemostat system with $n$ species competing for one same resource writes 
\begin{equation}{\label{sys0}}
\left|
\begin{array}{cl}
\dot{x}_i&=(\mu_i(s)-D)x_i, \quad  \quad 1 \leq i \leq n,\vspace{0.1cm} \\
\dot{s}&=\ds -\sum_{j=1}^n \frac{\mu_j(s)}{Y_i}x_j+D(s_{in}-s), 
\end{array}
\right.
\end{equation}
where $x_i$ is the concentration of species $i$ (the consumers) and $s$ denotes the substrate  concentration (the resource).  The  numbers $Y_i$ are the yield coefficients, the parameter $s_{in}$ is the input substrate coefficient, the functions $\mu_i$ are the kinetics, and $D$ is the dilution rate.  
Properties of \eqref{sys0} has been studied a lot \cite{dochain1,SmithWalt,floc,GMR2009,alain-livre,Hsu1,Hsu2,lobry,Sari1,Wolko1}, 
and one essential feature is the famous {\it{competitive exclusion principle}} (CEP) which 
 asserts that, asymptotically, only one species survives \cite{Hsu1,Wolko1,SmithWalt,alain-livre}. 
Many extensions of the CEP have been studied in presence of delay, external inhibitors, or variable yields 
(see, {\it{e.g.}}, \cite{GMR2009,Hsu2,Sari1} among others). 
It is also worth mentioning that the CEP predicts  exclusion of the less competitive species and not coexistence in contrast with observations in several ecosystems. 
That is why, extensions of the chemostat system were also developed (such as in \cite{floc}) to cope with this reality.
In this paper, we consider another extension of the chemostat system related to  the possibility 
for a species to produce mutants or to appear through mutation 
(see, {\it{e.g.}}, \cite{szilard,deleen,deleen2}). It turns out that mutation will modify the behavior of the system leading to coexistence. 
There exist various approaches to model this phenomenon: each species may convert into other species with a mutation rate depending on various parameters such as the kinetics (see, {\it{e.g.}}, \cite{lobry} or \cite{Coralie}). Throughout this paper, we shall assume that the dispersion is such that each species $i$ converts into neighbor species $i+1$ and $i-1$ with a constant mutation rate. This amounts to add a linear term $Tx$ in the sub-system satisfied by the concentration vector $x$ in \eqref{sys0}, where $T$ is the mutation matrix. 
Our objective in this paper is to provide a thorough study of asymptotic stability properties of the resulting system. 
Surprisingly, to our best knowledge, few papers addressed this question apart \cite{Arkin,deleen} and \cite{BM,BM2} which study a minimal time  control problem to select optimally species of interest (see also \cite{masci08}). 

Let us  give a quick overview of \cite{deleen} that introduced  
the chemostat system with a mutation. The main result is a global stability property of the coexistence steady-state 
provided that the kinetics are sufficiently close to a nominal one $\mu_0$ as 
well as yield coefficients which also should be close to a nominal value $Y_0$. This means that the quantities   
$\|\mu_i-\mu_0\|_{L^\infty}$ and $|Y_i-Y_0|$ should be small enough for every $1 \leq i \leq n$ to ensure the global stability property. 
This result (in the spirit of \cite{Arkin}) is interesting in itself but it does not predict the behavior of the system whenever 
 kinetics  are not necessarily close to a common one. 
 In this paper, we consider the more general situation where kinetics are of Monod type, but not necessary close to a nominal one. 
Our aim is to address stability properties of the corresponding system.  
Based on experimental studies (see, {\it{e.g.}}, \cite{szilard}, we shall assume that the yield coefficients are equal to one.
As in \cite{deleen}, we shall see that mutation implies coexistence in contrast with the CEP for the chemostat model. 

The paper is structured as follows. In Section \ref{sec-1}, we introduce the chemostat system with mutation and we recall the CEP.  
In Section \ref{sec-2}, we show in Proposition \ref{disjonction} that there is exactly one locally asymptotically stable (LAS) steady-state provided that the dilution rate does not exceed a certain value 
(for which extinction of species would occur). This result extends the analysis of \cite{deleen} and relies on eigenvalue properties of a rank one perturbation of a symmetric non-positive matrix (see \cite{Bierkens}). 
In Section \ref{sec-dev-lim}, we compute an expansion of the steady-state in terms of the mutation factor. We obtain that way an interesting result asserting that, at steady-state, few 
species dominate, namely the one that wins the selection in absence of mutation, and its neighbors (see, Proposition \ref{prop-expan}). 
We also study the converse case, {\it{i.e.}}, when the mutation factor becomes large (w.r.t.~the kinetics of the system). 
In Section \ref{sec-persist}, we show a {\it{uniform persistence property}} (see \cite{ST11}). 
This property asserts that, asymptotically, each species is present in the system (and not only the total biomass \cite{deleen}). 
This uniform persistence property highlights the difference of the chemostat system with mutation w.r.t.~the classical chemostat system (leading to exclusion of every less competitive species). 
In Section \ref{sec-4}, we give our main result (Theorem \ref{GAS-thm}) about global stability of the steady-state for small enough dilution rates. 
To show that the equilibrium is GAS, we proceed  in three steps. First, we study stability properties of the system without dilution rate (with mutation). 
Next, we show that the GAS property is valid on an invariant attractive manifold associated with the system for small enough values of the dilution rate. 
This requires to prove a robust  persistence property (in line with \cite{deleen}) and to use perturbation results of \cite{SW99} (see also \cite{Salceanu,SW99,Thieme93}). We conclude by using 
 the theory of asymptotically autonomous systems (see, {\it{e.g.}}, \cite{Thieme92}). 
%
\section{Recap on the chemostat model and preliminary properties}{\label{sec-1}}
Throughout this paper, we consider a chemostat system with $n\geq 1$ species including a mutation effect between species. We suppose that each species $i$ is able to convert into 
species $i+1$ and $i-1$ with a constant mutation rate. This yields the following dynamical system
\begin{equation}{\label{sys1}}
\left|
\begin{array}{cl}
\dot{x}_i&=(\mu_i(s)-u)x_i + \eps (Tx)_i, \quad  \quad 1 \leq i \leq n,\vspace{0.1cm} \\
\dot{s}&=\ds -\sum_{j=1}^n \mu_j(s)x_j+u(1-s),
\end{array}
\right.
\end{equation}
where: 
\begin{itemize}
\item[$\bullet$] For every $1 \leq i \leq n$, $x_i$ denotes the concentration of species $i$ and $s$  the substrate concentration. 
\item[$\bullet$] For every $1 \leq i \leq n$, the kinetics $\mu_i$ of species $i$ is supposed to be of Monod type, {\it{i.e.}}, $\mu_i(s)=\frac{m_is}{a_i+s}$ ($a_i$, $m_i$ are positive numbers such that $i\not=j \; \Rightarrow (m_i,a_i)\not=(m_j,a_j)$). 
\item[$\bullet$] The input substrate concentration has been renormalized to $1$ and the dilution rate is $u\in \R_+$. 
\item[$\bullet$] The mutation parameter is $\eps \geq 0$, $x:=(x_1,...,x_n)^\top$ denotes the column vector of the species concentrations (the symbol $^\top$ is the transpose operator), and the mutation matrix\footnote{As usual, matrices are named using capital letters and coefficients are represented by lower case letters.} $T\in \R^{n \times n}$ is:
{\small{
\begin{equation}{\label{mut-matrix}}
T:=\left[
\begin{array}{ccccc}
-1 & 1 & 0 & \cdots & 0\\
1 & -2 & 1 & \cdots & 0\\
\vdots & \ddots & \ddots & \ddots & \vdots\\
0 & \cdots & 1 &  -2 & 1\\
0& \cdots & 0& 1& -1
\end{array}
\right].
\end{equation}
}}
\end{itemize}
Note that the symmetric matrix $T$ corresponds to the discretization of the one-dimensional Laplace equation (Poisson problem) with Neumann boundary conditions. 
We recall that it is {\it{quasi-positive}} ({\it{i.e.}}, for $i\not=j$, $t_{i,j}\geq 0$) and {\it{irreducible}} (since $(T+rI_n)^k$ is with positive entries for $r=3$ and  $k$ large enough).  
From Perron-Frobenius's Theorem (see, {\it{e.g.}}, \cite{Berman}), the largest eigenvalue of $T$ (called the {\it{Perron root}}) is simple and the {\it{Perron vector}} ({\it{i.e.}}, the corresponding unitary eigenvector) is positive. Finally, 
the sum of the coefficients of $T$ on a row is always zero\footnote{This property is also essential for proving the invariance of the set $\Delta$ (Lemma \ref{invaLem}).}, 
so, $0$ is necessarily the Perron root of $T$ and $a/\sqrt{n}$ is the Perron vector where 
$a:=(1,...,1)$. Note also that $T$ is non-positive. 
\begin{rem} 
More complex mutation terms between species can be also considered in the chemostat model as for instance in {\rm{\cite{Arkin,lobry}}} where the mutation factor involves the kinetics of the species. 
Mutation could also involve a pool of species close to some index $i$ (not only the two closest indexes of $i$), 
but, in this paper, we restrict our attention to a mutation term $\eps Tx$ where $T$ is given by \eqref{mut-matrix} (see {\rm{\cite{deleen}}}) and $\eps\geq 0$ is eventually a small parameter. System \eqref{sys1} can be viewed as an approximation of a population dynamics model involving a phenotypic trait, see, {\it{e.g.}}, {\rm{\cite{diekmann04,diekmann05,mirrahimi12,Perthame}}} (among others). 
\end{rem}
When $\eps=0$, we retrieve the classical chemostat model with $n \geq 1$ species described by the system
\begin{equation}{\label{chem1}}
\left|
\begin{array}{cl}
\dot{x}_i&=(\mu_i(s)-u)x_i,  \quad \quad 1 \leq i \leq n,\vspace{0.1cm} \\
\dot{s}&=\ds -\sum_{j=1}^n \mu_j(s)x_j+u(1-s),
\end{array}
\right.
\end{equation}
in such a way that \eqref{sys1} can be viewed as a regular perturbation\footnote{For the concept of regular perturbation of a dynamical system, we refer to 
\cite{Atha82,SW99,Magal} (see also references herein).} of \eqref{chem1}  for small values of $\eps$. 
When dealing with the chemostat system, it is usual to introduce the so-called {\it{break-even concentrations}} $\lambda_i(u)\in [0,+\infty)$ 
that play a key role in the chemostat system: 
\begin{equation}{\label{bec}}
    \lambda_i(u):=
\left\{
\begin{array}{lll}
\mu_i^{-1}(u) & \mathrm{if} \; \; \mu_i(1)>u, & \vspace{0.1cm}\\
+\infty & \mathrm{otherwise},  & 
\end{array}
\right. \quad 1 \leq i \leq n. 
\end{equation}
In order to study asymptotic stability properties of \eqref{sys1}, it will be helpful to recall the global stability properties of \eqref{chem1}. 
Doing so, set, 
$$
E_i=(0,...,0,1-\lambda_i(u),0,...,0,\lambda_i(u))\in \R^{n+1},
$$
for $1 \leq i \leq n$ and observe that $E_i$ is a steady-state of \eqref{chem1} provided that $\lambda_i(u)<+\infty$. In addition, the point 
$$
E_{wo}:=(0,...,0,1) \in \R^{n+1},
$$ 
is also an equilibrium of \eqref{chem1} (called {\it{washout steady-state}}). Thus, \eqref{chem1} has at most $n+1$ steady-states. 
The well-known {\it{competitive exclusion principle}} (CEP) can be now stated as follows. 
\begin{thm} {\label{CEP-thm}}
$\mathrm{(i)}$. Let $u>0$. If there is a unique $1 \leq i_0 \leq n$ such that $\lambda_{i_0}(u)=\min_{1 \leq i \leq n} \lambda_i(u)<+\infty$, then, for every initial condition $(x^0,s^0)\in [0,+\infty)^n \times [0,1]$ such that $x^0_{i_0}>0$, the unique solution of \eqref{chem1} starting at $(x^0,s^0)$ at time $0$ converges to  $E_{i_0}$. 

\noindent $\mathrm{(ii)}$. Let $u>0$. If $\min_{1 \leq i \leq n} \lambda_i(u)=+\infty$, then,  for every initial condition $(x^0,s^0)\in [0,+\infty)^n \times [0,1]$, the unique solution of \eqref{chem1} starting at $(x^0,s^0)$ at time $0$ converges to $E_{wo}$.  
\end{thm}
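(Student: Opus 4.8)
The plan is to reduce \eqref{chem1} to a limiting system on the attracting hyperplane furnished by total mass conservation, and then to analyse that reduced system with a Lyapunov function combined with the theory of asymptotically autonomous systems. First I would sum all the equations of \eqref{chem1}: setting $z:=s+\sum_{i=1}^n x_i$, the kinetic terms cancel and one obtains $\dot z = u(1-z)$, so that $z(t)\to 1$ exponentially for every $u>0$. This yields that $[0,+\infty)^n\times[0,1]$ contains a compact positively invariant set (the nonnegative cone being invariant since $x_i=0\Rightarrow \dot x_i=0$ and $s=0\Rightarrow \dot s=u>0$), that every trajectory is bounded, and that the simplex $\Sigma:=\{(x,s)\,:\,s+\sum_i x_i=1\}$ is globally attracting. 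Substituting $s=1-\sum_j x_j$ turns the flow on $\Sigma$ into the limiting system $\dot x_i=x_i\bigl(\mu_i(1-\sum_j x_j)-u\bigr)$, and the statements for \eqref{chem1} will follow from those for the limiting system by the theory of asymptotically autonomous systems (Thieme), once the relevant $\omega$-limit sets are shown to reduce to equilibria.

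For case (i) the target equilibrium is $E_{i_0}$, namely $x_{i_0}^*=1-\lambda_{i_0}(u)$, $x_i^*=0$ for $i\neq i_0$ and $s^*=\lambda_{i_0}(u)$. I would try the candidate
\[
W(x)=\int_{x_{i_0}^*}^{x_{i_0}}\frac{\xi-x_{i_0}^*}{\xi}\,\dd\xi+\sum_{i\neq i_0}x_i,
\]
whose derivative along the limiting flow, after inserting $x_{i_0}-x_{i_0}^*=(\lambda_{i_0}(u)-s)-\sum_{i\neq i_0}x_i$, simplifies to
\[
\dot W=(\lambda_{i_0}(u)-s)\,(\mu_{i_0}(s)-u)+\sum_{i\neq i_0}x_i\,\bigl(\mu_i(s)-\mu_{i_0}(s)\bigr).
\]
Monotonicity of $\mu_{i_0}$ makes the first term nonpositive and zero only when $s=\lambda_{i_0}(u)$.

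The step I expect to be the main obstacle is controlling the sign of the second term. The ordering $\lambda_{i_0}(u)<\lambda_i(u)$ only guarantees $\mu_i(\lambda_{i_0}(u))<u=\mu_{i_0}(\lambda_{i_0}(u))$ at the equilibrium level, whereas two distinct Monod curves may cross and $\mu_i(s)>\mu_{i_0}(s)$ is perfectly possible for large $s$. To overcome this I would use that two distinct Monod functions cross at most once on $(0,+\infty)$, so that $\mu_i-\mu_{i_0}$ has a single sign change lying above $\lambda_{i_0}(u)$; a trapping argument confining $s$ below that crossing value after a finite time then makes $\dot W\le 0$ asymptotically, and LaSalle's invariance principle drives every trajectory with $x_{i_0}^0>0$ into the largest invariant subset of $\{\dot W=0\}$, which is $\{E_{i_0}\}$. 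An alternative, which bypasses the crossing difficulty directly, is the refined Lyapunov function of Wolkowicz--Lu tailored to general monotone kinetics.

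Case (ii) is easier and I would treat it directly. Here $\lambda_i(u)=+\infty$ for all $i$ means $\mu_i(1)\le u$, and since each $\mu_i$ is increasing and $s\le 1$ on the attracting set, $\mu_i(s)\le\mu_i(1)\le u$, strictly whenever $s<1$. Taking $V:=\sum_i x_i$ gives $\dot V=\sum_i x_i(\mu_i(s)-u)\le 0$; as $s<1$ as long as $V>0$, LaSalle forces $V\to 0$, hence $x\to 0$ and $s\to 1$, and lifting back through the asymptotically autonomous reduction yields convergence to $E_{wo}$.
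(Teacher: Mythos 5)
Your reduction to the simplex, the appeal to Thieme's asymptotically autonomous theory, and your treatment of case (ii) are all sound, and your computation of $\dot W$ in case (i) is correct. The genuine gap is exactly where you suspected it: the crossing term $\sum_{i\neq i_0}x_i\bigl(\mu_i(s)-\mu_{i_0}(s)\bigr)$, and the trapping argument you propose does not go through. (A small point first: the unique crossing of $\mu_i$ and $\mu_{i_0}$ need not lie above $\lambda_{i_0}(u)$; when it lies below, $\mu_i<\mu_{i_0}$ on the relevant region and there is nothing to fix.) The real problem is the trapping itself. The level you need, $\bar s:=\min_i \bar s_i$ over those $i$ whose crossing $\bar s_i$ with $\mu_{i_0}$ lies above $\lambda_{i_0}(u)$, bears no relation to the break-even concentrations of the \emph{other} species: one can perfectly have a third species $k$ with $\bar s<\lambda_k(u)<1$, or with $\lambda_k(u)=+\infty$. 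On the simplex one has $\dot s=-\sum_j x_j\bigl(\mu_j(s)-u\bigr)$, so when the biomass is dominated by such a species $k$, the term $x_k\bigl(u-\mu_k(s)\bigr)>0$ makes $\dot s>0$ at $s=\bar s$: the region $\{s\le\bar s\}$ is not positively invariant, and $s$ can re-cross $\bar s$ as long as species $k$ carries appreciable mass. Proving that species $k$ is eventually eliminated, so that trapping finally holds, is essentially the competitive exclusion principle itself; as sketched, the argument is circular, and making it rigorous would require an induction over the ordering of the $\lambda_j(u)$'s and of the crossing points that your proposal does not contain.

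The paper avoids this issue entirely (see Remark \ref{lyapu-rem}): it works on the full system \eqref{chem1} and invokes Hsu's \emph{weighted} Lyapunov function \eqref{lyap1}, with weight $c_j=\frac{a_j+s^*}{a_j}$ on the $j$-th species term and a quadratic term handling the off-simplex direction. For Monod kinetics these weights are precisely what makes the crossing term sign-definite: writing $\mu_j(s)-u=\frac{(m_j-u)(s-\lambda_j(u))}{a_j+s}$, the contribution of each losing species $j$ to $\dot V$ is $\frac{x_j}{a_j+s}$ times an affine function of $s$, which is nonpositive for every $s\ge 0$ if and only if $c_j$ belongs to an explicit interval, and this interval is nonempty exactly when $\lambda_{i_0}(u)\le\lambda_j(u)$; one checks that $\frac{a_j+s^*}{a_j}$ is such a choice. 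Your $W$ is the unweighted case $c_j\equiv 1$, which violates these constraints whenever the kinetics cross above $\lambda_{i_0}(u)$; inserting the weights removes any need for trapping, yields $\dot V\le 0$ globally, and LaSalle concludes. Your fallback of quoting Hsu or Wolkowicz--Lu \cite{Hsu1,Wolko1} is legitimate and is in fact closest to what the paper itself does (it cites the literature rather than reproving the CEP), but as a self-contained argument your main route fails at this step.
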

\begin{rem} In case (i) of the previous theorem, if the minimum is non-unique, then, coexistence may occur {\rm{\cite{SmithWalt,alain-livre}}}, but, 
we do not develop this point here because it is non-generic. Throughout the paper, we shall assume (if necessary) that $u$ is such that the minimum is unique.  
\end{rem}
The competitive exclusion principle asserts a global stability property of one species for \eqref{sys1} initially present in the vessel, {\it{i.e.}}, only one species survives generically (namely the one with the least break-even concentration). 
\begin{rem}{\label{lyapu-rem}}
There are various proofs of this result (see, {\it{e.g.}}, {\rm{\cite{alain-livre,SmithWalt,RV}}} among others). When kinetics are of Monod type, a direct way is to use a Lyapunov function. Doing so, write 
$E_{i_0}=(x_1^*,...,x_n^*,s^*)$ and \eqref{chem1} as
\begin{equation}{\label{chem2}}
\left|
\begin{array}{cl}
\dot{\tilde x}_i&=(\mu_i(s^*)-u)\tilde x_i +  (\mu_i(s)-\mu_i(s^*))x_i, \quad \quad 1 \leq i \leq n, \vspace{0.1cm}\\
\dot{\tilde s}&= -u \tilde s - \sum_{j=1}^n \mu_j(s)\tilde x_j - \sum_{j=1}^n (\mu_j(s)-\mu_j(s^*)) x_j^*, 
\end{array}
\right.
\end{equation}
where $\tilde x_i:=x_i-x_i^*$ and $\tilde s:=s-s^*$. Next, it can be verified that the function
\begin{equation}{\label{lyap1}}
V(\tilde x,\tilde s):=\phi_{s^*}(\tilde s)+\sum_{j=1}^n \frac{a_j+s^*}{a_j}\phi_{x_j^*}(\tilde x_j)+\frac{1}{2}\Big{[}\tilde s + \sum_{j=1}^n \tilde x_j\Big{]}^2,
\end{equation}
is a strict Lyapunov function for \eqref{chem2} where $\phi_a(\sigma):=\sigma-a\ln(1+\sigma/a)$, $a>0$, see, {\it{e.g.}}, {\rm{\cite{GMR2009,Hsu1}}} and references herein. 
However, even if \eqref{sys1} is a regular perturbation of \eqref{chem1}, it is an open question how to construct a Lyapunov function for \eqref{sys1} based on \eqref{lyap1}  
(see {\rm{\cite{GMR2009,Hsu1}}}) or on relative entropy identities (see {\rm{\cite{coville1,coville2}}}). 
Besides, global stability property may fail to hold under small perturbations of a dynamical 
system\footnote{As an example, consider the system $\dot{x}=-x/(1+x^2)+\eps x$ for which $0$ is GAS for $\eps=0$ and LAS for every $\eps\in [0,1)$. But $0$ is never GAS for every $\eps>0$. We thank F. Mazenc for indicating to us such an example.}. 
\end{rem}
Going back to \eqref{sys1}, observe that solutions to \eqref{sys1} are defined globally over $\R_+$ and that the dynamics of $x$ can be rewritten 
\begin{equation}{\label{dyn-x}}
\dot{x}=B(s,u,\eps)x, 
\end{equation}
where 
\begin{equation}{\label{Bs}}
B(s,u,\eps):=M(s)-u I_n+\eps T\in \R^{n \times n},
\end{equation}
$I_n\in \R^{n\times n}$ denotes the identity matrix, and $M(s)$ stands for the diagonal matrix 
$M(s):=\mathrm{diag}(\mu_1(s),...,\mu_n(s))$.   
Note that the matrix $B(s,u,\eps)$ 
is quasi-positive for every $s\in \R$, so 
$\R_+^{n}$ is forward invariant by \eqref{dyn-x} (see, {\it{e.g.}}, \cite{deleen}). 
In contrast with \eqref{chem1}, it is enough to 
suppose that only one species is present at time $0$ to ensure that for every time $t>0$, one has $x_i(t)>0$ for every $1 \leq i \leq n$, as we now show.  
\begin{propri}{\label{propri-pos}}
Let $(\eps,u)\in \R_+^* \times \R_+$ and let $x(\cdot)$ be a solution to \eqref{sys1}. If there is $1 \leq i \leq n$ such that $x_i(0)>0$, then, for every time $t>0$, one has $x_j(t)>0$ for every $1 \leq j \leq n$. 
\end{propri}
\begin{proof} Recall that 
$\R_+^n$ is forward invariant by \eqref{dyn-x}. 
We  claim that for every time $t\geq 0$, one has $x_i(t)>0$. Indeed, let $t_0:=\inf\{t > 0 \; ; \; x_i(t)=0\}$ and suppose that $t_0<+\infty$. Since $x_i(t)>0$ for $t\in [0,t_0)$, one has
$$
x_i(t_0)=0 \quad \mathrm{and} \quad \dot{x_i}(t_0)=\eps(x_{i+1}(t_0)+x_{i-1}(t_0))\leq 0, 
$$
which implies $x_{i+1}(t_0)=x_{i-1}(t_0)=0$ 
Observe now that $\dot{x}_{i+1}(t_0)=\eps x_{i+2}(t_0)$ and since $x_{i+1}(\cdot)$ vanishes at $t=t_0$, we deduce that 
$$
\dot{x}_{i+1}(t_0)=\eps x_{i+2}(t_0)\leq 0. 
$$
Thus, one must have $x_{i+2}(t_0)=0$. In the same way, we  get that $x_{i-2}(t_0)=0$. By induction over $j$, we deduce that for every $1 \leq j \leq n$, one has $x_j(t_0)=0$. 
By Cauchy-Lipschitz's Theorem, one must have $x\equiv 0$ over $\R_+$ which is a contradiction since $x_i(0)>0$. This proves our claim. 

Let us now show that $x_{i+1}$ never vanishes over $(0,+\infty)$. If there is $t_1>0$ such that $x_{i+1}(t_1)=0$, then, we would have $\dot{x}_{i+1}(t_1) \leq 0$, thus
$$
\dot{x}_{i+1}(t_1)=\eps(x_i(t_1)+x_{i+2}(t_1))>0,
$$
since $x_i$ is positive over $\R_+$. This is a contradiction, therefore, one must have $x_{i+1}(t)>0$ for every time $t>0$. We can  repeat this argument step by step 
for every species, which proves the desired property. 
\end{proof}
Note also that if $s(0)\in [0,1]$, then one has $s(t)\in [0,1]$ for every $t\geq 0$. Hence, we shall consider initial conditions in the set
$$
\mathcal{D}:=(\R_+^n\backslash \{0\})  \times [0,1], 
$$
when dealing with \eqref{sys1}. The next property is related to the quantity $$
b:=s+\sum_{j=1}^n  x_j,
$$
and it is crucial in the rest of the paper. 
\begin{lem}{\label{invaLem}} For every $(\eps,u)\in \R_+ \times \R_+^*$, the set 
\begin{equation}
\Delta:=\Big{\{}(x,s)\in \mathcal{D} \; ; \; \sum_{j=1}^n x_j+s=1\Big{\}},
\end{equation}
is an invariant and attractive manifold for \eqref{sys1}. 
\end{lem}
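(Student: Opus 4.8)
The plan is to track the scalar quantity $b=s+\sum_{j=1}^n x_j$ and to show it satisfies an elementary linear ODE. First I would differentiate $b$ along the trajectories of \eqref{sys1}. Summing the equations for $\dot x_i$ over $i$ and adding $\dot s$, the kinetic contributions $\sum_i \mu_i(s)x_i$ produced by the biomass equations cancel exactly the term $-\sum_j \mu_j(s)x_j$ coming from $\dot s$. The decisive point is the treatment of the mutation term: I would invoke the fact that the rows of $T$ sum to zero, equivalently $aT=0$ with $a=(1,\dots,1)$, so that
$$
\sum_{i=1}^n (Tx)_i = (aT)x = 0,
$$
and the whole mutation contribution disappears from the balance of $b$. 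What survives is
$$
\dot b = -u\sum_{j=1}^n x_j + u(1-s) = u(1-b).
$$

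Next, integrating this linear equation yields $b(t)-1 = (b(0)-1)\,e^{-ut}$ for every trajectory. Invariance of $\Delta$ then follows at once: if $(x(0),s(0))\in\Delta$, i.e. $b(0)=1$, then $b(t)=1$ for all $t\geq 0$, so the trajectory stays on the hyperplane $\{b=1\}$. Attractiveness follows from $u>0$: since $e^{-ut}\to 0$, one has $b(t)\to 1$ as $t\to +\infty$ for every trajectory issued from $\mathcal D$, so every such trajectory converges to $\Delta$ at exponential rate $u$.

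Finally I would verify that a trajectory starting in $\mathcal D$ remains in $\mathcal D$, so that the analysis above genuinely takes place inside the state space and $\Delta$ is a true invariant manifold. This relies only on properties already recorded before the statement: $s(t)\in[0,1]$ is preserved, $\R_+^n$ is forward invariant under \eqref{dyn-x}, and a component positive at time $0$ remains positive for $t>0$ (by Property \ref{propri-pos} when $\eps>0$, and directly from $x_i(t)=x_i(0)\exp\!\big(\int_0^t(\mu_i(s)-u)\,\dd\tau\big)$ when $\eps=0$), so that $x(t)\neq 0$.

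There is no substantial obstacle here; the entire argument hinges on the single algebraic identity $aT=0$ (the zero row-sum of the mutation matrix, exactly as flagged in the footnote), which is what renders the mutation term invisible to the total-mass balance and collapses $\dot b$ to a one-dimensional linear equation. The only point requiring care is the bookkeeping guaranteeing that the solution does not leave $\mathcal D$, which is routine given the preliminary properties.
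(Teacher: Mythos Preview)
Your proof is correct and follows essentially the same approach as the paper: compute $\dot b$ along trajectories, use the zero row-sum property of $T$ to eliminate the mutation contribution, obtain $\dot b=u(1-b)$, and integrate to get $b(t)=1+(b(0)-1)e^{-ut}$. The paper's proof is simply a terse one-line version of what you wrote; your additional bookkeeping about trajectories remaining in $\mathcal D$ is sound but not included in the paper's argument.
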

\begin{proof}
From \eqref{sys1}, $b$ satisfies 
$\dot{b}=u(1-b)$, hence $b(t)=1+(b(0)-1)e^{-tu}$ for $t\geq 0$, 
whence the result. 
\end{proof}
This lemma makes possible (if necessary) to reduce 
the stability properties of \eqref{sys1} to the system 
$$
\dot{x}=B\Big{(}1-\sum_{j=1}^n x_j,u,\eps\Big{)}x,
$$
obtained from \eqref{sys1} by  considering conditions in 
$\Delta$. 
Note that if $(x^0,s^0)\in \Delta$, then 
$\sum_{j=1}^n x_j^0 \leq 1$, that is why, it is also useful to introduce the set
\begin{equation}{\label{setF}}
\mathcal{D}':=\Big{\{}x\in [0,+\infty)^n \; ; \; \sum_{j=1}^n x_j\leq 1\Big{\}},
\end{equation}
when dealing with initial conditions in $\Delta$. 
The next property is well-known for \eqref{chem1} (see, {\it{e.g.}}, \cite{alain-livre,SmithWalt}) and it remains unchanged for \eqref{sys1}. 
\begin{propri}
For every $u>0$, there is $c_u>0$ such that for every $\eps \geq 0$ and for every initial condition in $\mathcal{D}$, the unique corresponding solution to \eqref{sys1} satisfies:
\begin{equation}{\label{repulsif}}
\liminf_{t\rightarrow +\infty} s(t)\geq c_u. 
\end{equation}
\end{propri}
\begin{proof} Let $\delta_u:=\sup \{s\in [0,1] \; ; \; \max_{1 \leq j \leq n} \mu_j(s)\leq \frac{u}{8}\}$, Since  $\mu_i(0)=0$ for every $1 \leq i \leq n$, $\delta_u$ is well-defined. 
and we can set $c_u:=\min(\delta_u,1/2)$. From Lemma \ref{invaLem}, there is $t_0 \geq 0$ such that $\sum_{j=1}^n x_j(t)\leq 2$ for every $t\geq t_0$.
If now there is $t \geq t_0$ such that $s(t) \leq c_u$, one has
 $$
 \dot{s}(t) =-\sum_{j=1}^n \mu_j(s(t))x_j(t)+u(1-s(t))\geq -\frac{u}{8}\sum_{j=1}^n x_j(t)+\frac{u}{2}\geq -\frac{u}{4}+\frac{u}{2}=\frac{u}{4}.
 $$
From the preceding inequality, $[0,c_u]$ is a repelling set for the dynamics of $s$ which then implies \eqref{repulsif}.  
\end{proof}
\section{Local asymptotic stability}{\label{sec-2}}
\subsection{Existence of a locally stable equilibrium}
Throughout the paper, given a symmetric matrix $A\in \R^{n \times n}$, we denote by $\lambda(A)$ its largest eigenvalue. 
\begin{lem}{\label{lem-croissance}}
Let $(\eps,u)\in \R_+ \times \R_+^*$. Then, 
one has $\lambda(B(0,u,\eps))<0$ and the mapping $s\mapsto \lambda(B(s,u,\eps))$ is increasing over $[0,1]$. 
\end{lem}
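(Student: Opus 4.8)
The plan is to treat the two assertions separately, exploiting that the $s$-dependence of $B(s,u,\eps)=M(s)-uI_n+\eps T$ enters only through the diagonal matrix $M(s)$, while $-uI_n+\eps T$ is constant in $s$.

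For the first claim, I would simply evaluate at $s=0$. Since $\mu_i(0)=0$ for every $i$, we have $M(0)=0$, so that $B(0,u,\eps)=-uI_n+\eps T$. The eigenvalues of this matrix are exactly $-u+\eps\theta$ where $\theta$ ranges over the spectrum of $T$. As recalled in Section \ref{sec-1}, $T$ is non-positive with Perron root equal to $0$, hence $\lambda(\eps T)=\eps\cdot 0=0$ for every $\eps\geq 0$. Adding the scalar matrix $-uI_n$ shifts every eigenvalue by $-u$, so $\lambda(B(0,u,\eps))=-u<0$ since $u>0$.

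For the monotonicity, I would rely on the Rayleigh (Courant--Fischer) characterization of the largest eigenvalue of a symmetric matrix,
\[
\lambda(B(s,u,\eps))=\max_{\|v\|=1} v^\top B(s,u,\eps)v.
\]
Fix $0\le s<s'\le 1$. Because only $M$ depends on $s$, the difference is
\[
B(s',u,\eps)-B(s,u,\eps)=M(s')-M(s)=\mathrm{diag}\big(\mu_1(s')-\mu_1(s),\dots,\mu_n(s')-\mu_n(s)\big).
\]
Each Monod kinetics is strictly increasing on $[0,1]$ (indeed $\mu_i'(s)=m_i a_i/(a_i+s)^2>0$), so this diagonal matrix has strictly positive entries and is therefore positive definite. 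Letting $v^*$ be a unit eigenvector attaining $\lambda(B(s,u,\eps))$, I would then estimate
\[
\lambda(B(s',u,\eps))\ge (v^*)^\top B(s',u,\eps)v^* = \lambda(B(s,u,\eps)) + (v^*)^\top\big(M(s')-M(s)\big)v^*>\lambda(B(s,u,\eps)),
\]
which yields the strict increase.

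The computations are elementary; the only point requiring care is the \emph{strictness} of the monotonicity. This is where positive definiteness (rather than mere semidefiniteness) of $M(s')-M(s)$ is essential, and it is precisely guaranteed by the strict monotonicity of the Monod functions. Had some $\mu_i$ been constant on a subinterval, one could only conclude that $\lambda$ is non-decreasing; here the strict positivity of every increment $\mu_i(s')-\mu_i(s)$ closes the argument regardless of which coordinate the eigenvector $v^*$ charges.
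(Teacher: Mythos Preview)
Your proof is correct. The evaluation at $s=0$ is identical to the paper's argument. For the monotonicity, however, you take a different route: the paper invokes a Perron--Frobenius type comparison principle (for two quasi-positive irreducible symmetric matrices $C\le D$ with at least one strict inequality, one has $\lambda(C)<\lambda(D)$), whereas you use the Rayleigh quotient directly, plugging the maximizing eigenvector of $B(s,u,\eps)$ into the quadratic form of $B(s',u,\eps)$ and exploiting that $M(s')-M(s)$ is diagonal positive definite. Your argument is more elementary---it needs only symmetry, not irreducibility---and in fact handles the borderline case $\eps=0$ (where $B(s,u,0)$ is diagonal and not irreducible, so the comparison principle cited in the paper does not apply verbatim) without any separate discussion. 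The paper's approach, on the other hand, embeds the lemma in the Perron--Frobenius framework that is used repeatedly elsewhere in the paper, which keeps the toolbox uniform.
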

\begin{proof}
Observe that $B(0,u,\eps)=-u I_n+\eps T$ which implies  $\lambda(B(0,u,\eps))=-u<0$. Now, thanks to the Perron-Frobenius Theorem \cite{Berman}, for every $s\in [0,1]$, $\lambda(B(s,u,\eps))$ exists and is of multiplicity one. It is also the unique eigenvalue associated with a positive eigenvector. Recall now that given two quasi-positive irreducible and symmetric matrices $C,D\in \R^{n \times n}$ such that $c_{i,j}\leq d_{i,j}$ for every $1 \leq i,j \leq n$ (with a strict inequality for at least one coefficient), one has $\lambda(C)<\lambda(D)$, see \cite{Berman}.  
Since for every $1 \leq i \leq n$, 
$\mu_i$ is increasing, so is $s\mapsto \lambda(B(s,u,\eps))$. 
\end{proof}
Next, we  study the existence of a  locally stable equilibrium point for \eqref{sys1}. 
Doing so, we shall use a result of \cite{Bierkens} about rank one perturbations of a singular $M$-matrix $A=\rho(H) I_n-H \in \R^{n \times n}$  
where $\rho(H)$ denotes the spectral radius of a given matrix $H\in \R^{n\times n}$. Let us  recall the concept of $M$-matrix. 
\begin{defi}
Given $A=(a_{i,j})\in \R^{n\times n}$, we say that $A$ is an $M$-matrix if there exists a matrix $H\in \R^{n\times n}$ and $s>\rho(H)$ such that 
$$
A=sI_n-H \quad \mathrm{and} \quad s>\rho(H). 
$$
\end{defi}
Notice that if $A$ is an $M$-matrix, then its eigenvalues are with nonnegative real parts and $a_{i,j}\leq 0$ for $i\not=j$. 
Theorem 2.7 of \cite{Bierkens} provides sufficient conditions for a matrix $A+v w^\top$ (where $v,w\in \R^n$) to be positive stable if $0$ is a geometrically simple eigenvalue of $A$. We refer to \cite{Bierkens} for the precise statement of those conditions.  
The next Proposition (point (ii) only) extends the analysis of {\rm{\cite{deleen}}} showing that, depending on the values of $(\eps,u)$, 
\eqref{sys1} has a unique locally stable equilibrium. 
The notation $\|\cdot\|$ stands for the euclidean norm in $\R^n$. 
\begin{prop}{\label{disjonction}}
{\rm{(i)}} If $(\eps,u)\in \R_+^* \times \R_+^*$ is such that $\lambda(B(1,u,\eps))\leq 0$, then, the washout steady-state $E_{wo}$ is the only equilibrium of \eqref{sys1} and it is stable. If $\lambda(B(1,u,\eps))<0$, it is globally asymptotically stable. 
\vspace{0.1cm}
\\
{\rm{(ii)}} If $(\eps,u)\in \R_+^* \times \R_+^*$ is such that $\lambda(B(1,u,\eps))>0$, then,  \eqref{sys1} admits a unique locally stable equilibrium 
$E_{\eps,u}:=(x^{\eps,u},s^{\eps,u})\in (0,+\infty)^n\times (0,1)$ called coexistence steady-state and $E_{wo}$ is unstable. 
 \end{prop}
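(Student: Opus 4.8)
```latex
\textbf{Proof strategy.} The plan is to study equilibria of \eqref{sys1} through the
sign of $\lambda(B(s,u,\eps))$, exploiting the monotonicity established in
Lemma \ref{lem-croissance} together with the invariant manifold $\Delta$ from
Lemma \ref{invaLem}. Since $\Delta$ is invariant and attractive, I would reduce the
problem to the system $\dot{x}=B(1-\sum_j x_j,u,\eps)x$ on $\mathcal{D}'$, so that an
interior equilibrium of \eqref{sys1} corresponds to a vector $x^*\in(0,+\infty)^n$ with
$s^*=1-\sum_j x_j^*\in(0,1)$ and $B(s^*,u,\eps)x^*=0$. Because $B(s,u,\eps)$ is
quasi-positive, irreducible and symmetric, Perron--Frobenius guarantees that $0$ is an
eigenvalue \emph{with a positive eigenvector} precisely when $\lambda(B(s,u,\eps))=0$.
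Thus the search for an interior equilibrium reduces to finding $s^*\in(0,1)$ solving
$\lambda(B(s^*,u,\eps))=0$.

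\textbf{Part (i).} If $\lambda(B(1,u,\eps))\leq 0$, then by Lemma \ref{lem-croissance}
the increasing map $s\mapsto\lambda(B(s,u,\eps))$ satisfies $\lambda(B(s,u,\eps))<0$ for
all $s\in[0,1)$ and $\leq 0$ at $s=1$. Hence there is no $s^*\in(0,1)$ with
$\lambda=0$, so no interior equilibrium exists; the only equilibrium on $\Delta$ is
$E_{wo}=(0,\dots,0,1)$. To analyse its stability I would linearise: the Jacobian of the
$x$-subsystem at $E_{wo}$ is exactly $B(1,u,\eps)$, whose largest eigenvalue is
$\lambda(B(1,u,\eps))\leq 0$, while the $s$-direction contributes $-u<0$. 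When
$\lambda(B(1,u,\eps))<0$ all eigenvalues are strictly negative, giving local stability;
for the global claim I would use the attractivity of $\Delta$ and a comparison /
monotonicity argument showing $\|x(t)\|\to 0$ since $\dot{x}=B(s,u,\eps)x$ with
$\lambda(B(s,u,\eps))\leq\lambda(B(1,u,\eps))<0$ along trajectories, forcing
$s\to 1$.

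\textbf{Part (ii).} If $\lambda(B(1,u,\eps))>0$, then since $\lambda(B(0,u,\eps))=-u<0$
(Lemma \ref{lem-croissance}) and $s\mapsto\lambda(B(s,u,\eps))$ is continuous and
\emph{strictly increasing}, there is a \emph{unique} $s^{\eps,u}\in(0,1)$ with
$\lambda(B(s^{\eps,u},u,\eps))=0$. At this value, $0$ is a simple eigenvalue of
$B(s^{\eps,u},u,\eps)$ with a positive Perron eigenvector $v>0$; scaling $v$ so that
$\sum_j v_j=1-s^{\eps,u}$ (possible since $s^{\eps,u}\in(0,1)$) yields the interior
equilibrium $x^{\eps,u}>0$. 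Uniqueness of the interior equilibrium follows from
uniqueness of $s^{\eps,u}$ together with one-dimensionality of $\ker B(s^{\eps,u},u,\eps)$
and the normalisation on $\Delta$. For the instability of $E_{wo}$, the Jacobian is again
$B(1,u,\eps)$ whose largest eigenvalue $\lambda(B(1,u,\eps))>0$ gives a positive
eigendirection, hence $E_{wo}$ is unstable.

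\textbf{Main obstacle.} The delicate point is establishing \emph{local stability} of the
coexistence equilibrium $E_{\eps,u}$, because the linearisation of \eqref{sys1} at
$E_{\eps,u}$ is no longer governed by a symmetric matrix: differentiating the $x$- and
$s$-equations produces off-diagonal coupling terms involving $\mu_i'(s^{\eps,u})x_i^{\eps,u}$,
so Perron--Frobenius does not directly apply. This is precisely where I would invoke the
rank-one-perturbation result of \cite{Bierkens}. The idea is to write the full Jacobian
as $-A+vw^\top$ (or a similar structure) where $A$ is a singular $M$-matrix built from
$-B(s^{\eps,u},u,\eps)$ (for which $0$ is a geometrically simple eigenvalue with
eigenvector $x^{\eps,u}$), and the rank-one term $vw^\top$ encodes the substrate feedback.
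Verifying the sign conditions of Theorem 2.7 of \cite{Bierkens} — essentially checking
that the perturbation pushes the zero eigenvalue into the stable half-plane while all
other eigenvalues of $A$ already have positive real part — would yield that the perturbed
Jacobian is positive stable, i.e.\ all eigenvalues of the actual Jacobian have negative
real part. I expect the bulk of the technical work to lie in correctly identifying the
vectors $v,w$ and confirming the hypotheses of the cited theorem.
```
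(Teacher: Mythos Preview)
Your proposal is essentially correct and follows the same route as the paper: monotonicity of $s\mapsto\lambda(B(s,u,\eps))$ to locate equilibria, Perron--Frobenius for existence and uniqueness of the interior steady state, a comparison argument $\dot{x}\leq B(1,u,\eps)x$ for the global washout stability in (i), and the rank-one perturbation theorem of \cite{Bierkens} for the local stability of $E_{\eps,u}$ in (ii).

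The one technical point you are missing concerns \emph{how} the Bierkens result is applied. You propose to write ``the full Jacobian'' as a singular $M$-matrix plus rank one, but in the $(x,s)$ coordinates the $(n+1)\times(n+1)$ Jacobian is not of this form: the bottom row carries the terms $-\mu_j(s^{\eps,u})$ and the $(n+1,n+1)$ entry is $-\sum_j\mu_j'(s^{\eps,u})x_j^{\eps,u}-u$, which spoils the structure. The paper's trick is to pass to the variables $(x,b)$ with $b=s+\sum_j x_j$, for which $\dot{b}=u(1-b)$ decouples. The Jacobian at $(x^{\eps,u},1)$ then becomes block upper triangular,
\[
J_{\eps,u}=\begin{bmatrix} A_{\eps,u} & d_{\eps,u}\\ 0 & -u\end{bmatrix},\qquad
A_{\eps,u}=B(s^{\eps,u},u,\eps)-d_{\eps,u}\,a^\top,
\]
with $d_{\eps,u}=M'(s^{\eps,u})x^{\eps,u}>0$ and $a=(1,\dots,1)^\top$. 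Now $-A_{\eps,u}$ is exactly a singular $M$-matrix (namely $-B(s^{\eps,u},u,\eps)$, whose Perron root is $0$) plus the nonnegative rank-one term $d_{\eps,u}a^\top$, and Theorem~2.7(v) of \cite{Bierkens} applies directly to conclude that $-A_{\eps,u}$ is positive stable. The identification of $v,w$ you worried about is thus immediate: $v=d_{\eps,u}$, $w=a$, both componentwise positive. Once you incorporate this change of variables (which is natural given Lemma~\ref{invaLem}), the remaining verification is short.
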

\begin{proof} For sake of completeness, we give the proof of (i) which can also be found in \cite{deleen}. If $(x,s)$ is a steady-state of \eqref{sys1}, then 
\begin{equation}{\label{eq-steady-state}}
\left\{
\begin{array}{ll}
B(s,u,\eps)x&=0,\vspace{0.1cm}\\
\sum_{j=1}^n \mu_j(s)x_j&=u(1-s). 
\end{array}
\right.
\end{equation}
The equation $B(s,u,\eps)x=0$ with $x\not=0$ implies that 
0 is an eigenvalue, hence $\lambda(B(s,u,\eps))\geq 0$. It is possible only if $s=1$. Indeed, otherwise, since $s<1 \; \Rightarrow \; \lambda(B(s,u,\eps))<\lambda(B(1,u,\eps))$, we would have $\lambda(B(s,u,\eps))<0$ and a contradiction. It follows from \eqref{eq-steady-state} 
that any equilibrium verifies $x=0$, so, the only equilibrium point is the washout. 
The Jacobian of \eqref{sys1} at $E_{wo}$ is the block matrix
$$
\left[
\begin{array}{cc}
B(1,u,\eps) & 0\\
-\mu_1(1)\cdots -\mu_n(1) & -u
\end{array}\right]\in \R^{(n+1)\times (n+1)}.
$$
If $\lambda(B(1,u,\eps))\leq 0$, then, 
$\dot{x}=B(s,u,\eps)x\leq B(1,u,\eps)x$, thus there is $C_1>0$ such that $\|x(t)\| \leq C_1 \|x(0)\|$ for every $t\geq 0$. Using 
that $\Delta$ is attractive for \eqref{sys1} (recall Lemma \ref{invaLem}), we deduce that $E_{wo}$ is stable. If, in addition, $\lambda(B(1,u,\eps))<0$,  $B(1,u,\eps)$ is a Hurwitz matrix, and, thanks to the inequality $\dot{x}\leq B(1,u,\eps)x$, we deduce that $x(t)\rightarrow 0$ as $t\rightarrow +\infty$ which proves the desired property using Lemma \ref{invaLem}. 

In case (ii), we find two equilibria depending if $s=1$ or $s<1$. If $s=1$, then $x=0$ and the corresponding steady-state is the washout that is unstable since $\lambda(B(1,u,\eps))>0$.  
The other possible steady-states satisfy \eqref{eq-steady-state} with $s<1$, so, $x\in \mathrm{Ker}(B(s,u,\eps))\backslash \{0\}$. 
But, the largest eigenvalue of $B(s,u,\eps)$ is the only one with a positive eigenvector (thanks to the Perron-Frobenius Theorem). So, we necessarily have $\lambda(B(s,u,\eps))=0$ which has a unique solution $s^{\eps,u}$ 
(because of the monotonicity of $\lambda(B(\cdot,u,\eps))$ and the fact that $\lambda(B(0,u,\eps))\lambda(B(1,u,\eps))<0$). Hence, zero is the Perron root of $B(s^{\eps,u},u,\eps)$ (it is a simple eigenvalue) and we denote by $a^{\eps,u}\in \R^n$ its Perron vector. We deduce that $x$ necessarily satisfies
$$
x=\nu a^{\eps,u} \quad \mathrm{and} \quad \sum_{j=1}^n x_j+s^{\eps,u}=1, 
$$
where $\nu\in \R_+^*$. These two equalities define a unique point $x^{\eps,u}\in \R^n$ such that $x^{\eps,u}_i>0$ for every $1 \leq i \leq n$. 
Let us now turn to the local asymptotic stability property. Observe that \eqref{sys1} is equivalent to
$$
\left|\begin{array}{cl}
\dot{x}&= B(b - \sum_{j=1}^n x_j,u,\eps)x,\vspace{0.1cm} \\
\dot{b}&=u(1-b),
\end{array}\right.
$$
(recall that $b=s+\sum_{j=1}^n x_j$). The Jacobian matrix of the preceding system at $(x^{\eps,u},1)$ is
$$
J_{\eps,u}:=\left[
\begin{array}{cc}
A_{\eps,u} & d_{\eps,u}\\
0 & -u
\end{array}\right]\in \R^{(n+1)\times (n+1)},
$$
where $d_{\eps,u}:=M'(s^{\eps,u})x^{\eps,u}\in \R^n$ and $A_{\eps,u}:=B(s^{\eps,u},u,\eps)- d_{\eps,u} a^\top\in \R^{n\times n}$ is a rank-one perturbation of $B(s^{\eps,u},u,\eps)$. 
For proving our claim, it is then 
enough to show that $A_{\eps,u}$ is a Hurwitz matrix. Observe that one has
$-B(s^{\eps,u},u,\eps)=\rho(H) I_n-H$ where $H:=\rho(B(s^{\eps,u},u,\eps)) I_n+B(s^{\eps,u},u,\eps)$. Hence, $-B(s^{\eps,u},u,\eps)$ can be written as a singular $M$-matrix which is thus non-negative.  
 We can now apply Theorem 2.7 (v) of \cite{Bierkens}  with the matrix $H$ (for this, note that $d_{\eps,u}$ and  $a$ have positive coefficients) and deduce that 
 $-A_{\eps,u}$ is strictly positive stable (which means that all eigenvalues of $-A_{\eps,u}$ are with positive real parts). We can thus conclude that 
  $J_{\eps,u}$ is a Hurwitz matrix which ends the proof. 
\end{proof}
In the rest of the paper, we keep the notation $a^{\eps,u}$ for the Perron vector associated with the $0$ eigenvalue of the matrix $B(s^{\eps,u},u,\eps)$. 
\subsection{Occurrence of the washout and coexistence steady-states}{\label{occu-sec}}
In this part, we make more explicit the condition about $\lambda(B(1,u,\eps))$ which separates  washout and  coexistence equilibria in Proposition \ref{disjonction}. It is convenient to introduce the functions 
$$
\bar \mu(s):=\frac{1}{n}\sum_{j=1}^n \mu_j(s) \; ; \; \hat \mu(s):=\max(\mu_1(s),...,\mu_n(s)),  
$$
which are increasing over $\R_+$. Also, we set $m:=\hat \mu(1)$. 
\begin{prop}{\label{prop-ineg}} For every $(\eps,u)\in \R_+^* \times \R_+^*$ such that $\lambda(B(1,u,\eps))>0$, one has:
\begin{equation}{\label{Seq}}
\bar \mu(s^{\eps,u})\leq u \leq \hat \mu(s^{\eps,u}) \leq u+2\eps.
\end{equation}
In addition, for every $(\eps,u)\in \R_+^* \times \R_+^*$, the quantity $\lambda(B(1,u,\eps))$ satisfies the following inequalities:
\begin{equation}{\label{Sin}}
\max\Big{(}\hat \mu(1)-u-2\eps,\bar \mu(1)-u\Big{)} \leq \lambda(B(1,u,\eps))\leq 
\hat \mu(1)-u.
\end{equation}
\end{prop}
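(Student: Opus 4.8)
The plan is to prove the two-sided bound \eqref{Sin} on $\lambda(B(s,u,\eps))$ \emph{for every} $s\in[0,1]$ (not merely at $s=1$), since the cost is identical, and then to read off \eqref{Seq} by specializing to $s=s^{\eps,u}$, where Proposition \ref{disjonction} guarantees $\lambda(B(s^{\eps,u},u,\eps))=0$. Throughout I would exploit that $B(s,u,\eps)=M(s)-uI_n+\eps T$ is symmetric, so that its largest eigenvalue admits the Rayleigh characterization $\lambda(B(s,u,\eps))=\max_{\|v\|=1}v^\top B(s,u,\eps)v$; testing well-chosen unit vectors then yields lower bounds, while the (standard) subadditivity of the largest eigenvalue, $\lambda(C+D)\leq\lambda(C)+\lambda(D)$ for symmetric $C,D$, yields the upper bound. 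Since $-uI_n$ merely shifts the spectrum, one has $\lambda(B(s,u,\eps))=\lambda(M(s)+\eps T)-u$, an identity I would use repeatedly.

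For the upper bound I would write $\lambda(M(s)+\eps T)\leq\lambda(M(s))+\eps\,\lambda(T)$ by subadditivity (here $\eps\geq 0$). As $M(s)$ is diagonal, $\lambda(M(s))=\hat\mu(s)$, and, since $T$ is non-positive with Perron root $0$ (recall Section \ref{sec-1}), one has $\lambda(T)=0$; hence $\lambda(B(s,u,\eps))\leq\hat\mu(s)-u$. For the two lower bounds I would evaluate the Rayleigh quotient on two vectors. Taking the normalized Perron vector $v=a/\sqrt{n}$ of $T$ and using $Ta=0$ (zero row sums) together with $\|v\|=1$ gives $v^\top B(s,u,\eps)v=\frac1n\sum_{j=1}^n\mu_j(s)-u=\bar\mu(s)-u$, whence $\lambda(B(s,u,\eps))\geq\bar\mu(s)-u$. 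Taking instead $v=e_{i_0}$, the $i_0$-th canonical basis vector with $\mu_{i_0}(s)=\hat\mu(s)$, gives $v^\top B(s,u,\eps)v=\hat\mu(s)-u+\eps\,t_{i_0,i_0}\geq\hat\mu(s)-u-2\eps$, because every diagonal entry of $T$ satisfies $t_{i,i}\geq-2$. Combining these three estimates at $s=1$ produces \eqref{Sin}.

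Finally, to obtain \eqref{Seq} I would apply the same three inequalities at $s=s^{\eps,u}$ and insert $\lambda(B(s^{\eps,u},u,\eps))=0$: the average lower bound gives $\bar\mu(s^{\eps,u})-u\leq 0$, the upper bound gives $0\leq\hat\mu(s^{\eps,u})-u$, and the max lower bound gives $\hat\mu(s^{\eps,u})-u-2\eps\leq 0$, which rearrange into $\bar\mu(s^{\eps,u})\leq u\leq\hat\mu(s^{\eps,u})\leq u+2\eps$. None of the steps is genuinely difficult; the only point requiring care is the choice of test vectors — specifically, recognizing that the canonical vector at the index realizing $\hat\mu$ is what produces the $-2\eps$ term through the worst-case diagonal entry of $T$, while the all-ones direction, being annihilated by $T$, is precisely what isolates the average $\bar\mu$. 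The upper-bound step likewise hinges on $\lambda(T)=0$, the spectral expression of the zero-row-sum property of the discrete Neumann Laplacian.
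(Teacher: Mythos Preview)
Your proof is correct and takes a genuinely different route from the paper. You exploit the symmetry of $B(s,u,\eps)$ through the Rayleigh quotient: testing $v=a/\sqrt{n}$ (annihilated by $T$) yields $\lambda\geq\bar\mu(s)-u$ in one line, testing $v=e_{i_0}$ yields $\lambda\geq\hat\mu(s)-u-2\eps$ via the diagonal entry of $T$, and Weyl's subadditivity with $\lambda(T)=0$ gives the upper bound. The paper instead works from the Perron eigenvector equation $B(s,u,\eps)x=\zeta x$ componentwise: the upper bound comes from the Perron--Frobenius comparison principle applied to $B(s,u,\eps)\leq(\hat\mu(s)-u)I_n+\eps T$; the bound $\lambda\geq\hat\mu(s)-u-2\eps$ comes from writing $\zeta=\mu_i(s)-u-a_i\eps+\eps(x_{i+1}+x_{i-1})/x_i$ and dropping the last (positive) term; and the bound $\lambda\geq\bar\mu(s)-u$ is obtained by \emph{averaging} these identities over $i$ and applying the arithmetic--geometric mean inequality to the telescoping sums $\sum_j x_{j+1}/x_j$ and $\sum_j x_{j-1}/x_j$. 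Your approach is shorter and more transparent, and it makes clear that the $\bar\mu$ bound is exactly the Rayleigh quotient on the Perron direction of $T$; the paper's approach, by contrast, does not rely on symmetry and would survive for a non-symmetric quasi-positive irreducible mutation matrix, at the price of the AM--GM detour.
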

\begin{proof} 
First, observe that $B(s,u,\eps)\leq \hat B(s)$ (componentwise) where $\hat B(s):=\max_{1 \leq j \leq n} (\mu_j(s)-u)I_n+\eps T\in \R^{n\times n}$, and that both matrices are quasi-positive and irreducible. We can thus deduce that $\lambda(B(s,u,\eps))\leq \lambda(\hat B(s))=\max_{1 \leq j \leq n}\mu_j(s)-u$. This gives the second inequality in \eqref{Sin}. 

Now, for $s\in [0,1]$, we set $\zeta:=\lambda(B(s,u,\eps))$ and let $x$ be an eigenvector of $B(s,u,\eps)$ with eigenvalue $\zeta$. Recall from Perron-Frobenius's Theorem that $x>0$. 
The equality $B(s,u,\eps)x=\zeta x$ rewrites
\begin{equation}{\label{ineg-tmp}}
\left\{
\begin{array}{ll}
(\mu_1(s)-u-\eps)x_1+\eps x_2&=\zeta x_1,\\
(\mu_i(s)-u- 2 \eps)x_i+\eps (x_{i+1}+x_{i-1})&=\zeta x_i, \quad 2 \leq i \leq n-1,\\
(\mu_n(s)-u-\eps)x_n+\eps x_{n-1}&=\zeta x_n,
\end{array}
\right.
\end{equation}
Summing those $n$ equalities with $s=s^{\eps,u}$ and $x=x^{\eps,u}$  gives 
$$
\sum_{j=1}^n \mu_j(s^{\eps,u})x_j^{\eps,u}=u \sum_{j=1}^n x_j^{\eps,u},
$$
thus, one obtains $u=\sum_{j=1}^n p_j^{\eps,u} \mu_j(s^{\eps,u})$ (where for $1 \leq i \leq n$, $p_i^{\eps,u}:=\frac{x_i^{\eps,u}}{\sum_{j=1}^n x_j^{\eps,u}}$). Because $p_i^{\eps,u} \leq 1$, we deduce that 
$u \leq \hat \mu(s^{\eps,u})$, which gives the second inequality in \eqref{Seq}.
From \eqref{ineg-tmp} with $s\in [0,1]$, we also get
\begin{equation}{\label{ineg-tmp2}}
\zeta=\mu_i(s)-u-a_i \eps+\eps \frac{x_{i+1}+x_{i-1}}{x_i},
\end{equation}
with $a_i=2$ for $2 \leq i \leq n-1$, $a_1=a_n=1$, and the convention that $x_0=x_{n+1}=0$. 
Since $x_i>0$ for $1 \leq i \leq n$, 
this equality entails $$\zeta \geq \max_{1 \leq j \leq n} \mu_j(s)-u-2\eps.$$ 
From the preceding inequality, we can deduce the last inequality in \eqref{Seq} (taking $s=s^{\eps,u})$ and also the inequality 
$\lambda(B(1,u,\eps))\geq  \hat \mu(1) -u-2\eps$ in \eqref{Sin} (taking $s=1$).  

To conclude, we need to prove the two  inequalities in \eqref{Seq}-\eqref{Sin}  involving the mean values of the kinetics. Doing so, we sum  equalities \eqref{ineg-tmp2} which gives
$$
\zeta=\frac{1}{n}\sum_{j=1}^n \mu_j(s)-u-\frac{2(n-1)}{n}\eps +\frac{\eps}{n}\left[\sum_{j=1}^{n-1}\frac{x_{j+1}}{x_j}+\sum_{j=2}^{n}\frac{x_{j-1}}{x_j}\right].
$$
Applying the arithmetic-geometric mean inequality yields
$$
\frac{1}{n-1}\sum_{j=1}^{n-1}\frac{x_{j+1}}{x_j} \geq \left(\prod_{j=1}^{n-1}\frac{x_{j+1}}{x_j}\right)^{\frac{1}{n-1}}= \left(\frac{x_n}{x_1}\right)^{\frac{1}{n-1}} \; ; \; 
\frac{1}{n-1}\sum_{j=1}^{n-1}\frac{x_{j-1}}{x_j} \geq \left(\frac{x_1}{x_n}\right)^{\frac{1}{n-1}},
$$

which implies
$$
\zeta \geq \frac{1}{n}\sum_{j=1}^n \mu_j(s)-u-\frac{2(n-1)}{n}\eps + \frac{(n-1)\eps}{n}\left[\left(\frac{x_n}{x_1}\right)^{\frac{1}{n-1}}+\left(\frac{x_1}{x_n}\right)^{\frac{1}{n-1}}\right]. 
$$
Using that $y^\frac{1}{n-1}+y^{-\frac{1}{n-1}} \geq 2$ for every $y>0$, we  obtain the inequality $\zeta \geq \bar \mu(s)-u$. 
Specializing this  inequality with $s=s^{\eps,u}$ and $s=1$ gives us the left  inequalities in \eqref{Seq} and \eqref{Sin} which concludes the proof. 
\end{proof}
%
Thanks to this proposition, we can make the following observations:
\begin{itemize}
\item[$\bullet$] From \eqref{Seq}-\eqref{Sin}, the steady-state $E_{\eps,u}$ occurs whenever $\bar \mu(1)> u$ and this condition does not depend on the parameter $\eps$. 
\item[$\bullet$] Observe also that, thanks to those inequalities, we recover the fact that if $u \geq m$, then, only washout occurs. 
\end{itemize}
For every $\eps\geq 0$, we can also uniquely define a critical value for the dilution rate  
$$
u_c(\eps):=\lambda(M(1)+\eps T), 
$$
which is such that only washout occurs if the dilution rate is such that $u\geq u_c(\eps)$ (according to Proposition \ref{disjonction} (i)). 
From \eqref{Sin} and the previous remarks, the value $u_c(\eps)$ satisfies:
\begin{equation}{\label{borne-uc}}
\forall \eps \geq 0, \; u_c(\eps)\in [\hat u(\eps),m],
\end{equation}
where $\hat u(\eps):=\max(m-2\eps,\bar \mu(1))$. Interestingly, the presence of mutation in the system implies occurrence of the washout for values of the dilution rate in the interval $[m-2\eps,m)$ 
for which the species with the least break-even concentration would survive (without mutation). 
In addition, we can observe that  the larger the mutation rate is, the lower the dilution need to be to avoid washout. 

We now recall a result related to the differentiability of $\lambda(\cdot)$ that will be applied several times in this paper. 
Given a symmetric quasi-positive matrix  $A$, the largest eigenvalue of $A$ is simple and thus $\lambda(\cdot)$ is analytic as a function of its $n^2$ coefficients in some neighborhood of $A$ in the symmetric matrices space (see, {\it{e.g.}}, \cite{YY2016,DN84}). 
In addition, the first derivative of $\lambda(\cdot)$ ({\it{i.e.}}, the matrix whose $(i,j)$ entry is $\frac{\partial \lambda}{\partial a_{i,j}}(A))$ is given by
\begin{equation}{\label{deriv-lambda}}
D_1\lambda(A)=w w^\top,
\end{equation}
where $w$ denotes the Perron vector associated with $\lambda(A)$ (see \cite{DN84,Harker87}).

\begin{prop}{\label{uc decreasing}}
The function $\eps \mapsto u_c(\eps)$ is non-increasing over $\R_+$. In addition, one has $u_c(0)=m$, and $u_c(\eps)\rightarrow \bar \mu(1)$ as $\eps\rightarrow +\infty$. 
\end{prop}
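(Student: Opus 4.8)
The plan is to handle the three assertions separately, using throughout the variational (Rayleigh) characterization of the largest eigenvalue of a symmetric matrix,
$$
u_c(\eps)=\lambda(M(1)+\eps T)=\max_{\|w\|=1}\big[w^\top M(1)w+\eps\, w^\top Tw\big],
$$
together with two structural facts recalled before the statement: $T$ is non-positive, so $w^\top Tw\leq 0$ for every $w\in\R^n$; and, being symmetric, irreducible with Perron root $0$ and Perron vector $a/\sqrt n$, its kernel is the one-dimensional space spanned by the unit vector $w_0\in\R^n$ having all entries equal to $1/\sqrt n$. The value $u_c(0)=m$ is then immediate, since $M(1)=\mathrm{diag}(\mu_1(1),\dots,\mu_n(1))$ is diagonal and its largest eigenvalue is $\max_j\mu_j(1)=\hat\mu(1)=m$. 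For monotonicity I would fix $\eps_1\leq\eps_2$ and observe that for each fixed unit vector $w$ the map $\eps\mapsto w^\top M(1)w+\eps\, w^\top Tw$ is affine with slope $w^\top Tw\leq 0$, hence non-increasing; taking the maximum over $\|w\|=1$ of a family of non-increasing functions yields a non-increasing function, so $u_c(\eps_2)\leq u_c(\eps_1)$. (Alternatively one can differentiate: by \eqref{deriv-lambda} one gets $u_c'(\eps)=w_\eps^\top Tw_\eps\leq 0$ with $w_\eps$ the unit Perron vector of $M(1)+\eps T$, but the variational argument avoids invoking analyticity.)

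The limit as $\eps\to+\infty$ is the substantive part. First, testing the Rayleigh quotient against $w_0$ gives the uniform lower bound
$$
u_c(\eps)\geq w_0^\top M(1)w_0+\eps\, w_0^\top Tw_0=\bar\mu(1),
$$
because $Tw_0=0$ and $w_0^\top M(1)w_0=\tfrac1n\sum_j\mu_j(1)=\bar\mu(1)$; combined with monotonicity, this shows the limit $L:=\lim_{\eps\to+\infty}u_c(\eps)$ exists with $L\geq\bar\mu(1)$. For the reverse inequality I would let $w_\eps$ denote the unit Perron vector of $M(1)+\eps T$, so that $u_c(\eps)=w_\eps^\top M(1)w_\eps+\eps\, w_\eps^\top Tw_\eps$. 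Using $w_\eps^\top M(1)w_\eps\leq m$ together with the lower bound yields $\bar\mu(1)-m\leq\eps\, w_\eps^\top Tw_\eps\leq 0$, which forces $w_\eps^\top Tw_\eps\to 0$. Then along any sequence $\eps_k\to+\infty$, compactness of the unit sphere extracts a subsequential limit $w_*$ with $\|w_*\|=1$, nonnegative entries, and (by continuity) $w_*^\top Tw_*=0$; since $T$ is negative semidefinite with kernel $\mathrm{span}(w_0)$, this forces $w_*=w_0$. Passing to the limit in $u_c(\eps_k)\leq w_{\eps_k}^\top M(1)w_{\eps_k}$ gives $u_c(\eps_k)\to w_0^\top M(1)w_0=\bar\mu(1)$, whence $L\leq\bar\mu(1)$ and finally $L=\bar\mu(1)$.

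I expect the last step to be the main obstacle: one must control the penalty term $\eps\, w_\eps^\top Tw_\eps$ and show that the Perron vectors collapse onto the constant vector. The decisive observation is that the uniform lower bound $u_c(\eps)\geq\bar\mu(1)$ prevents this penalty from diverging, and since $T$ has a spectral gap at its top eigenvalue $0$, the vanishing of $w_\eps^\top Tw_\eps$ pins $w_\eps$ to $\ker T=\mathrm{span}(w_0)$ in the limit; everything else is continuity and the elementary Rayleigh-quotient monotonicity argument.
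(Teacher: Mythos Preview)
Your proof is correct, and it takes a genuinely different route from the paper on two of the three points. For monotonicity, the paper differentiates directly via the eigenvalue derivative formula \eqref{deriv-lambda}, obtaining $u_c'(\eps)=(v^\eps)^\top T v^\eps\leq 0$ (an argument you mention as an alternative); your Rayleigh-quotient argument---$u_c$ as a pointwise maximum of affine, non-increasing functions of $\eps$---is more elementary and sidesteps any appeal to analyticity of simple eigenvalues. For the limit $\eps\to+\infty$, the paper rescales and applies a first-order eigenvalue expansion around $T$, writing $u_c(\eps)=\eps\,\lambda\!\big(T+\tfrac{1}{\eps}M(1)\big)=\eps\big[\lambda(T)+\tfrac{1}{\eps}\,\tfrac{a^\top M(1)a}{a^\top a}+o(1/\eps)\big]=\bar\mu(1)+o(1)$, which is a one-line calculation once \eqref{deriv-lambda} is in hand. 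Your argument instead bounds the penalty term $\eps\,w_\eps^\top T w_\eps\in[\bar\mu(1)-m,0]$ to force $w_\eps^\top T w_\eps\to 0$, and then uses compactness plus the spectral gap of $T$ at its top eigenvalue to pin the Perron vectors onto $w_0=a/\sqrt{n}$. The paper's approach is shorter and reuses machinery already set up in the section; yours is self-contained, avoids perturbation theory, and makes the mechanism (collapse of the Perron vector onto the constant vector) more explicit. One small wording quibble: the line ``$u_c(\eps_k)\to w_0^\top M(1)w_0$'' is, strictly speaking, obtained only after combining the upper bound from the subsequence with the already-established lower bound $L\geq\bar\mu(1)$; you do state this immediately afterwards, so the logic is fine.
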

\begin{proof} 
Applying the previous property with the symmetric quasi-positive matrix 
$B(1, 0,\eps) = M(1)+\eps T$ gives
$$
u_c'(\eps) = \sum_{1 \leq i,j \leq n} \frac{\partial \lambda(B(1,0,\eps))}{\partial b_{i,j}}\frac{\partial b_{i,j}(1,0,\eps)}{\partial \eps} = (v^\eps)^\top T v^\eps \leq 0,
$$
where $v^\eps$ is the Perron vector associated with $B(1, 0,\eps)$ and $b_{i,j}(1,0,\eps)$ denote the $n^2$ entries of $B(1,0,\eps)$. 
This shows that $u_c(\cdot)$ is non-increasing over $\R_+$. 
Now, from the CEP, we have immediately $u_c(0)=m$. Finally, 
using \eqref{deriv-lambda}, we have the expansion
\begin{equation}{\label{uc limite}}
    u_c(\eps) = \eps \lambda \left( T + \frac{1}{\eps} M(1) \right) = \eps \left[ \lambda(T) + \frac{1}{\eps} \frac{a^\top M(1) a}{a^\top a} + o\left(\frac{1}{\eps}\right) \right] = \bar \mu(1) + o(1),
\end{equation}
as $\eps \rightarrow +\infty$, which concludes the proof. 
\end{proof}

\subsection{Global stability in the two species case}
In this part, we prove that $E_{\eps,u}$ is GAS for \eqref{sys1} when $n=2$ and we also give explicit expressions for the steady-state and the critical value of the dilution rate. 
We start by addressing the global stability property. 
\begin{prop} 
For $n=2$ and $(\eps,u) \in \R_+^* \times (0,u_c(\eps))$, $E_{\eps,u}$ is globally asymptotically stable in $\mathcal{D}$. 
\end{prop}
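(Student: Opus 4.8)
The plan is to reduce the problem to a planar one and then apply the Poincaré--Bendixson theory. First, since by Lemma \ref{invaLem} the manifold $\Delta$ is invariant and attractive, I restrict the analysis to the two-dimensional system obtained by substituting $s=1-x_1-x_2$, namely
\[
\left|
\begin{array}{cl}
\dot{x}_1 &= (\mu_1(1-x_1-x_2)-u)x_1 + \eps(x_2-x_1),\vspace{0.1cm}\\
\dot{x}_2 &= (\mu_2(1-x_1-x_2)-u)x_2 + \eps(x_1-x_2),
\end{array}
\right.
\]
which leaves the triangle $\mathcal{D}'$ invariant. Denote by $g=(g_1,g_2)$ the associated vector field. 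Since $u<u_c(\eps)$ is equivalent to $\lambda(B(1,u,\eps))=u_c(\eps)-u>0$, Proposition \ref{disjonction}(ii) ensures that this planar system has exactly one equilibrium in the interior of $\mathcal{D}'$, namely (the projection of) $E_{\eps,u}$, that it is LAS, and that the washout, here the vertex $(0,0)$, is unstable.

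The crux is to exclude periodic orbits by the Bendixson--Dulac criterion. I would take the Dulac multiplier $\beta(x_1,x_2):=\frac{1}{x_1x_2}$, which is smooth and positive on the interior of $\mathcal{D}'$. Using $\partial s/\partial x_i=-1$ together with $\mu_i'>0$, a direct computation yields
\[
\frac{\partial (\beta g_1)}{\partial x_1}+\frac{\partial (\beta g_2)}{\partial x_2}
= -\frac{\mu_1'(s)}{x_2}-\frac{\mu_2'(s)}{x_1}-\eps\Big(\frac{1}{x_1^2}+\frac{1}{x_2^2}\Big)<0
\]
on the interior of $\mathcal{D}'$. As this divergence keeps a constant (strictly negative) sign on the simply connected interior, the planar system has no periodic orbit there.

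It remains to locate the $\omega$-limit sets. By the uniform persistence property established in Section \ref{sec-persist} (see also Property \ref{propri-pos}), every trajectory issued from $\mathcal{D}$ satisfies $\liminf_{t\to+\infty}x_i(t)>0$, so its $\omega$-limit set is a compact invariant set contained in the open interior of $\mathcal{D}'$. Since the only interior equilibrium is $E_{\eps,u}$, which is LAS and hence not a saddle, no homoclinic loop can occur; combined with the absence of periodic orbits, the Poincaré--Bendixson theorem forces every such $\omega$-limit set to reduce to $E_{\eps,u}$. Thus $E_{\eps,u}$ is globally asymptotically stable for the reduced planar system.

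Finally I would transfer this conclusion to the full three-dimensional system on $\mathcal{D}$. Writing $b:=s+x_1+x_2$, Lemma \ref{invaLem} gives $b(t)\to 1$, so the $(x_1,x_2)$-dynamics is asymptotically autonomous with limit equation the planar system above; since the latter has a globally asymptotically stable equilibrium, the theory of asymptotically autonomous systems (\cite{Thieme92}) transfers global attractivity to \eqref{sys1}, while local stability is already provided by Proposition \ref{disjonction}(ii). The main obstacle is the boundary bookkeeping required to apply Poincaré--Bendixson rigorously; once the multiplier $1/(x_1x_2)$ is identified, the exclusion of cycles is immediate.
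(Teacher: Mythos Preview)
Your proof is correct and follows essentially the same route as the paper: reduce to the planar system on $\Delta$, apply the Bendixson--Dulac criterion with the same multiplier $1/(x_1x_2)$, invoke Poincar\'e--Bendixson, and then use the theory of asymptotically autonomous systems \cite{Thieme92} to pass from the invariant manifold to the full three-dimensional dynamics. The only cosmetic difference is that you appeal to the persistence results of Section~\ref{sec-persist} to keep $\omega$-limit sets away from the boundary, whereas the paper argues directly from the instability of the washout; both are valid and neither introduces circularity.
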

\begin{proof} 
For those initial conditions in the set $\Delta$, \eqref{sys1} is equivalent to
\begin{equation}{\label{sys-auto2}}
\dot{x}=h(x), 
\end{equation}
where $h:\mathcal{D}'\rightarrow \R^2$ is given 
by (recall \eqref{setF})
$$
h(x):=\left(\begin{array}{c}(\mu_1(1-x_1-x_2)-u)x_1+\eps(x_2-x_1)\\(\mu_2(1-x_1-x_2)-u)x_2+\eps(x_1-x_2)\end{array}\right).
$$
If we set $\vphi(x_1,x_2):=\frac{1}{x_1x_2}$ for $x_1,x_2>0$, a direct computation shows that the quantity
$$
\frac{\partial (\vphi h_1)}{\partial x_1}+\frac{\partial (\vphi h_2)}{\partial x_1}=-\eps\frac{x_1^2+x_2^2}{x_1^2x_2^2}-\frac{\mu'_1(1-x_1-x_2)}{x_2}-\frac{\mu'_2(1-x_1-x_2)}{x_2},
$$
is negative in the interior of $\mathcal{D}'$. It follows from the Bendixson-Dulac Theorem that no periodic orbits occurs 
in $\mathcal{D}'$ for \eqref{sys-auto2}. 
Now, Proposition \ref{disjonction} implies that only two equilibria occur, namely the washout $(0,0)$ which is unstable and the point $x^{\eps,u}$ in the interior of $\mathcal{D}'$ which is locally asymptotically stable. Since there are no periodic orbits, we deduce that $x^{\eps,u}$ is globally asymptotically stable for \eqref{sys1} restricted to $\Delta$. 
Now, coming back to \eqref{sys1} for $n=2$, the sub-system satisfied by $x$ reads 
\begin{equation}{\label{sys-nonauto2}}
\dot{x}=\tilde h(t,x),
\end{equation} 
where $\tilde h:\R \times \R^2 \rightarrow \R^2$ is defined as
$$
\tilde h(t,x):=\left(\begin{array}{c}(\mu_1(b(t)-x_1-x_2)-u)x_1+\eps(x_2-x_1)\\(\mu_2(b(t)-x_1-x_2)-u)x_2+\eps(x_1-x_2)\end{array}\right).
$$
Clearly, $\tilde h(t,x)\rightarrow h(t,x)$ uniformly locally w.r.t.~$x$, thus \eqref{sys-nonauto2} is a non-autonomous perturbation of \eqref{sys-auto2}. 
Since every solution to \eqref{sys-auto2} converges to $x^{\eps,u}$, 
we deduce from \cite{Thieme92} that every solution to \eqref{sys-nonauto2} also converges to this point. To conclude, let us given a solution $(x(\cdot),s(\cdot))$ of \eqref{sys1}. We have proved that 
$x(t)\rightarrow x^{\eps,u}$ when $t$ goes to infinity. Since $s(t)+x_1(t)+x_2(t)\rightarrow 1$ as $t\rightarrow +\infty$, we deduce that $s(t)\rightarrow s^{\eps,u}$ as $t$ goes to infinity which ends the proof.  
\end{proof}
We now turn to explicit expressions involving the steady-state. The sub-system $(x_1,x_2)$ of \eqref{sys1} rewrites
$$
\dot{x}=B(s,u,\eps)x \quad \mathrm{with} \quad 
B(s,u,\eps)=\left[
\begin{array}{cc}
\mu_1(s)-(u+\eps) & \eps\\
\eps & \mu_2(s)-(u+\eps)
\end{array}
\right].
$$
The largest eigenvalue of $B(s,u,\eps)$ can thus be explicitly computed:
\begin{equation}{\label{eq-s}}
\lambda(B(s,u,\eps))=-\eps-u+\frac{\mu_1(s)+\mu_2(s)}{2}+\frac{1}{2}\sqrt{(\mu_1(s)-\mu_2(s))^2+4\eps^2}.
\end{equation}
Hence, the coexistence steady-state $(x^{\eps,u},s^{\eps,u})$ exists provided that $\lambda(B(1,u,\eps))>0$ (see Proposition \ref{disjonction}) which amounts to saying that the dilution rate fulfills the inequality
$$
u<u_c(1)=-\eps+\frac{\mu_1(1)+\mu_2(1)}{2}+\frac{1}{2}\sqrt{(\mu_1(1)-\mu_2(1))^2+4\eps^2}. 
$$
For a given $u$ satisfying the previous inequality, we can compute $s^{\eps,u}$ numerically solving $\lambda(B(s,u,\eps))=0$ w.r.t.~$s$, thanks to \eqref{eq-s} (see Fig.~\ref{fig-2esp}). 
It follows that the dilution rate $u$ is related to $s^{\eps,u}$ via the equality
$$
u=-\eps+\frac{\mu_1(s^{\eps,u})+\mu_2(s^{\eps,u})}{2}+\frac{1}{2}\sqrt{(\mu_1(s^{\eps,u})-\mu_2(s^{\eps,u}))^2+4\eps^2}.
$$

Using that $x^{\eps,u}_1+x^{\eps,u}_2+s^{\eps,u}=1$, one also obtains
$$
x^{\eps,u}_1=\frac{(1-s^{\eps,u})(u-\mu_2(s^{\eps,u}))}{\mu_1(s^{\eps,u})-\mu_2(s^{\eps,u})} \; ; \; x^{\eps,u}_2=\frac{(1-s^{\eps,u})(u-\mu_1(s^{\eps,u}))}{\mu_2(s^{\eps,u})-\mu_1(s^{\eps,u})}. 
$$
\begin{rem}
The previous expressions of $x^{\eps,u}_i$ are valid if the kinetics do not intersect. If there is a (unique) $\bar s\in (0,1)$ 
such that $\mu_1(\bar s)=\mu_2(\bar s)$, then, these expressions are valid only if $u\not= \mu_1(\bar s)$. If $u=\bar u:=\mu_1(\bar s)$, then, one has $s^{\eps,\bar u}=\bar s$ and 
$x^{\eps,\bar u}=\big{(}\frac{1-\bar s}{2},\frac{1-\bar s}{2}\big{)}$. 
\end{rem}
Fig.~\ref{fig-2esp} depicts $\eps \mapsto x^{\eps,u}$, $\eps \mapsto s^{\eps,u}$, and $\eps \mapsto u_c(\eps)$ for a fixed $u>0$ such that species $2$ survives when $\eps=0$ 
(see the plof of $\mu_1$ and $\mu_2$ below). We verify numerically that $E_{\eps,u}\rightarrow E_2$  as $\eps \downarrow 0$ (see Section \ref{sec-dev-lim}) and that 
$u_c(0)=\max(\mu_1(1),\mu_2(1))$ and $u_c(+\infty)=\frac{\mu_1(1)+\mu_2(1)}{2}$ (see \eqref{uc limite}). 
\begin{figure}[!ht]
\centering
\includegraphics[width=3.2cm]{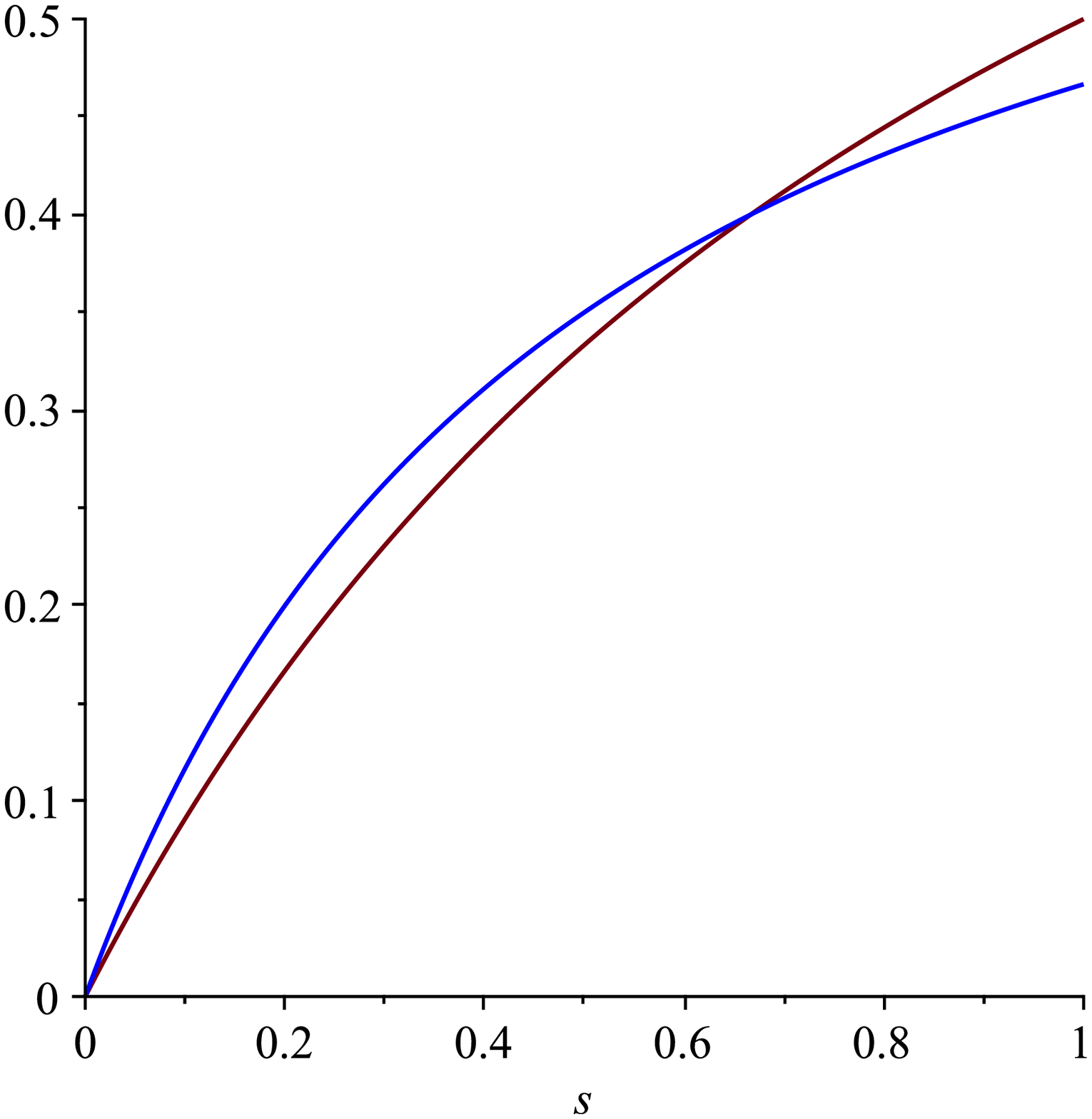}
\includegraphics[width=3.2cm]{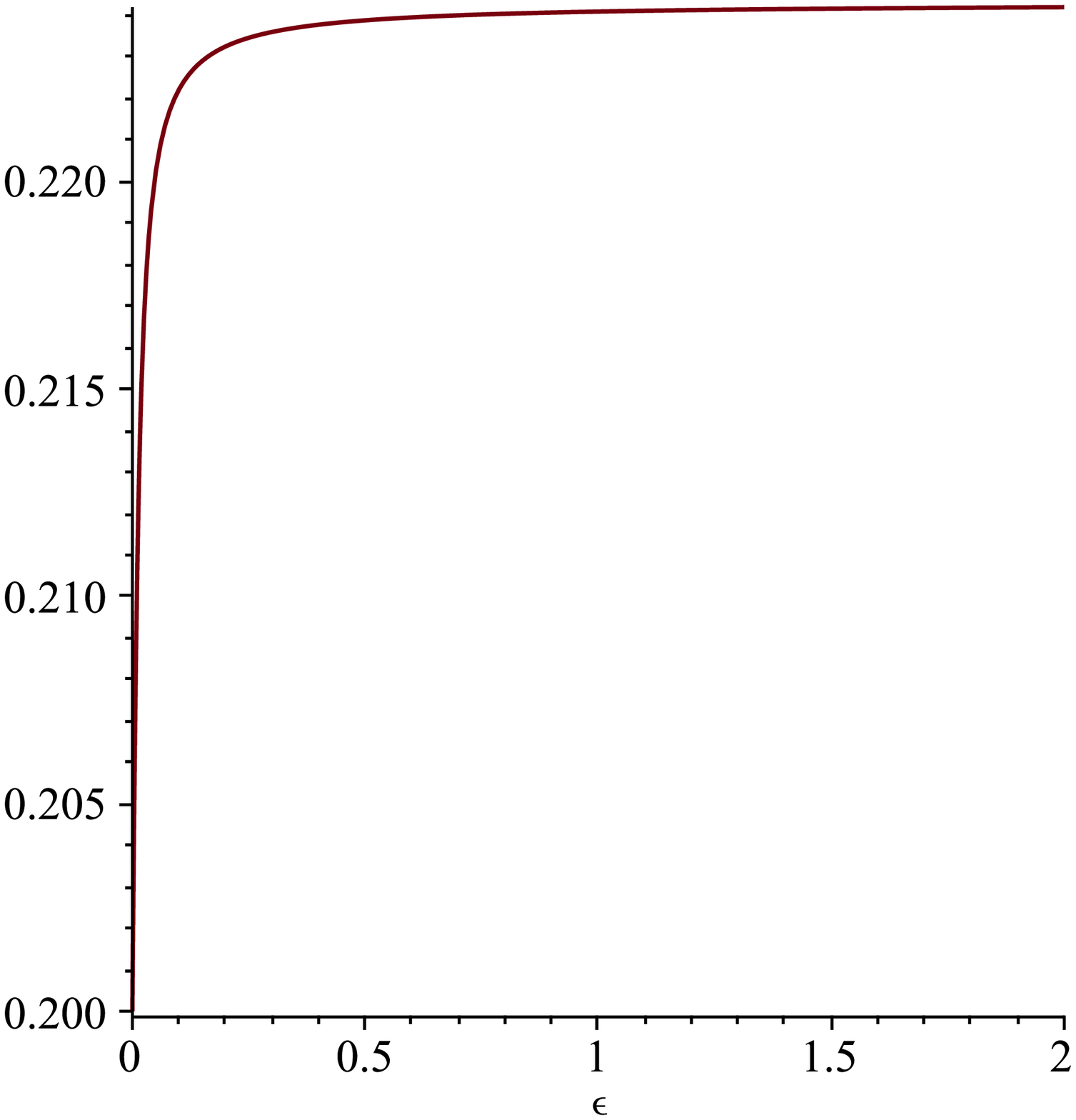}
\includegraphics[width=3.2cm]{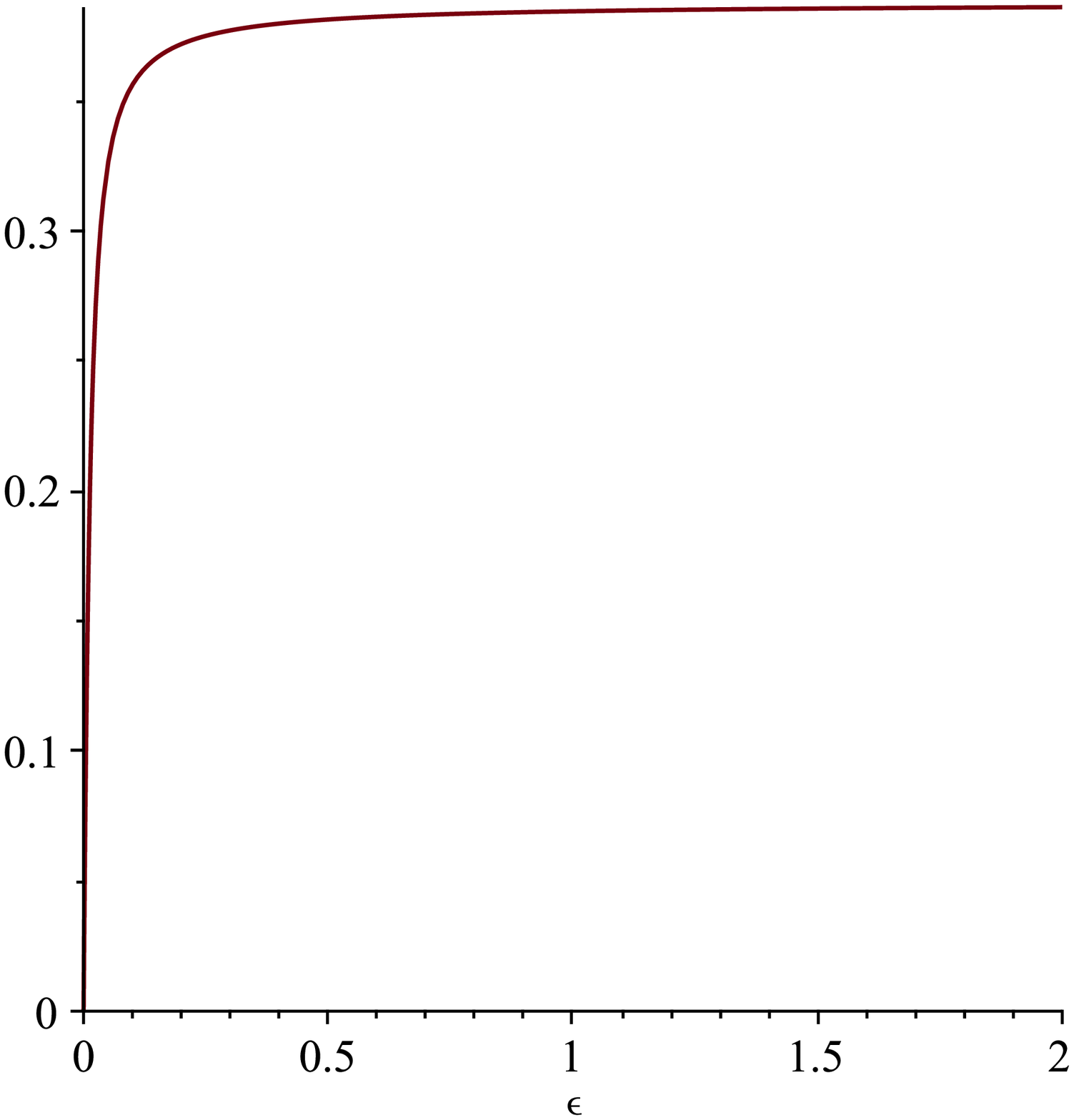}
\includegraphics[width=3.2cm]{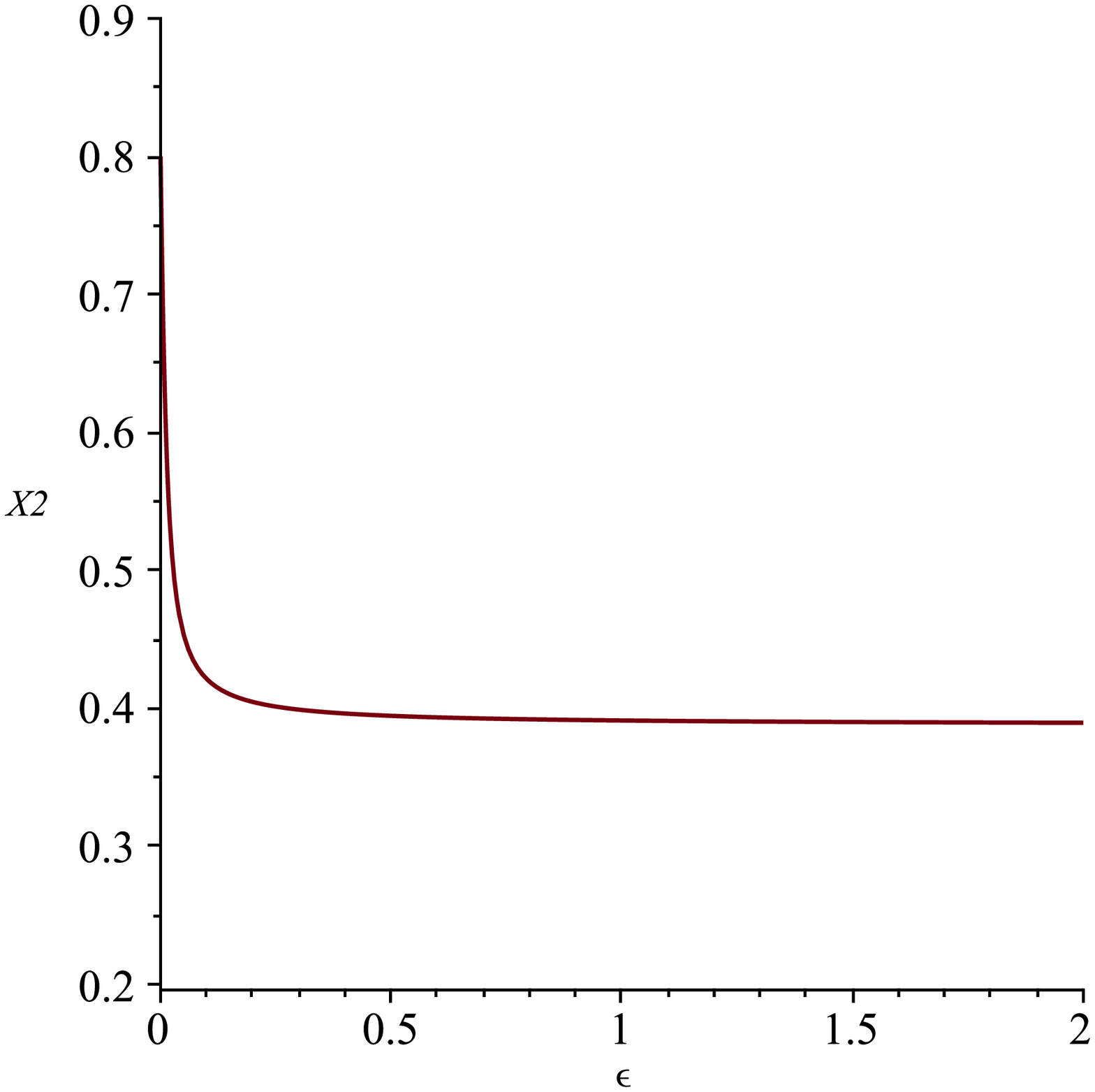}
\includegraphics[width=3.2cm]{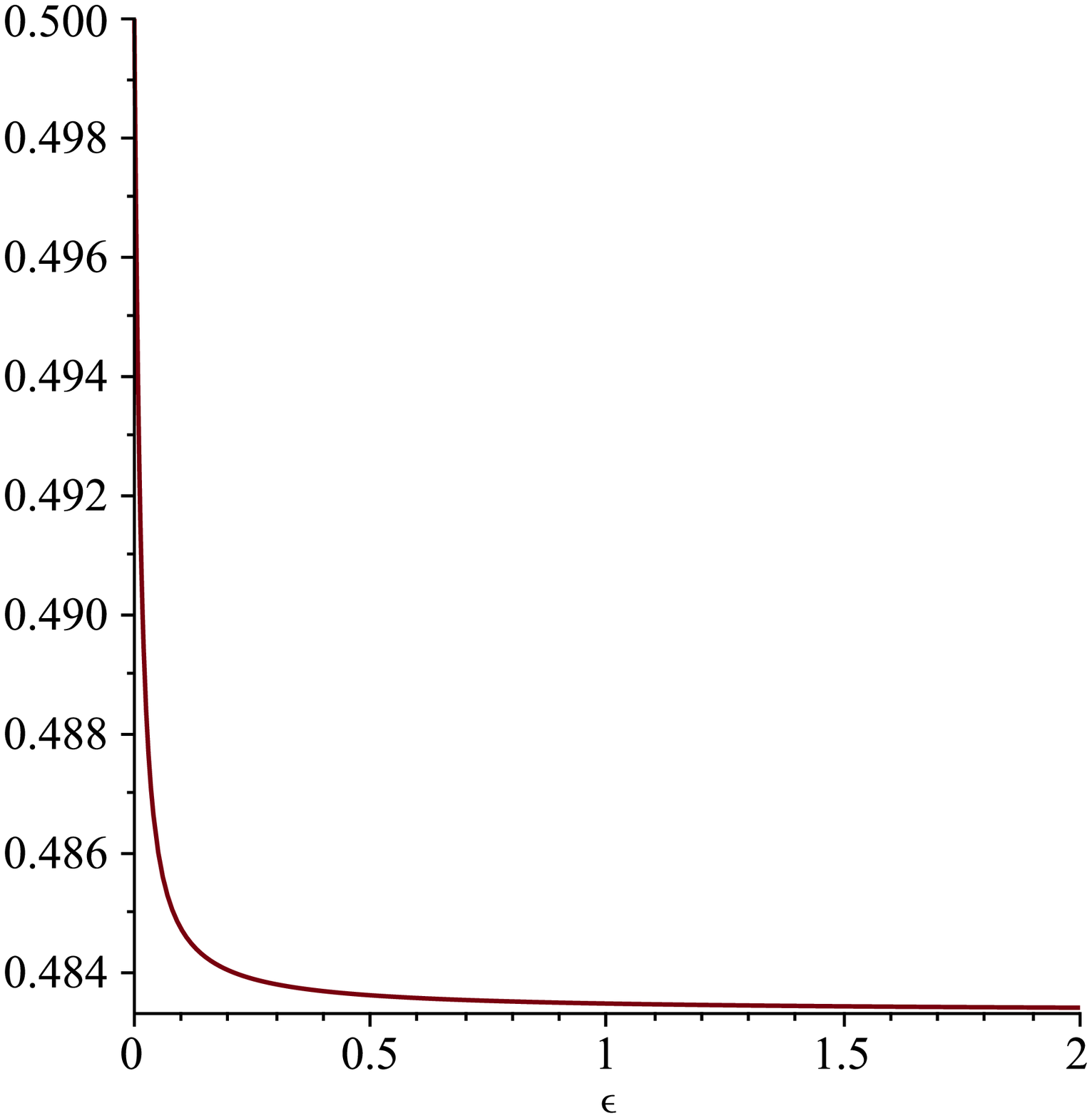}
\caption{Plot of $\mu_1(s):=\frac{s}{s+1}$ (in red) and $\mu_2(s):=\frac{0.7s}{0.5+s}$ (in blue) on Fig.~left. For $u=0.2$ and $\eps=0$, species $2$ survives and $\lambda_2(0.2)=0.2$ (recall \eqref{bec}).  Next, from left to right, plot of $s^{\eps,u}$, $x^{\eps,u}_1$, and $x^{\eps,u}_2$ as a function of $\eps\in [0,2]$. Fig.~right depics $u_c(\eps)$ illustrating \eqref{uc limite}.} 
\label{fig-2esp}
\end{figure}
\subsection{Illustration of the global stability property for $n\geq 3$}
In view of the local stability property of $E_{\eps,u}$ and the global stability of this equilibrium for $n=2$, one can wonder if this property 
remains valid for $n \geq 3$, $\eps>0$, and $u\in (0,u_c(\eps))$.  
Although we know the behavior of \eqref{sys1} for $\eps=0$, it turns out that this question is delicate even if $\eps$ is arbitrarily small (see also Remark \ref{lyapu-rem}).  
In Section \ref{sec-4}, we address this question when $\eps>0$ is fixed and $u$ 
is with small enough values.  

We present below numerical simulations of solutions to \eqref{sys1} for $n=20$, $\eps=1$, and $u=5$, see Fig.~\ref{fig-simu-global}. The kinetics associated with the species are arbitrary functions of Monod type.  
Our observations are as follows:
\begin{itemize}
\item[$\bullet$] First, we  
observe convergence of the system to the 
coexistence equilibrium for a large set of initial conditions. 
\item[$\bullet$] Interestingly, we also see that even though the system converges to the coexistence equilibrium, 
very few species have a significant concentration asymptotically. We shall give an explanation of this phenomenon in  Section \ref{sec-dev-lim} 
for small values of the parameter $\eps>0$.  
\end{itemize}
\begin{figure}[h!]
\centering
\includegraphics[width=5cm]{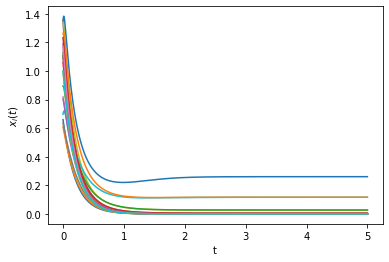}
\includegraphics[width=5cm]{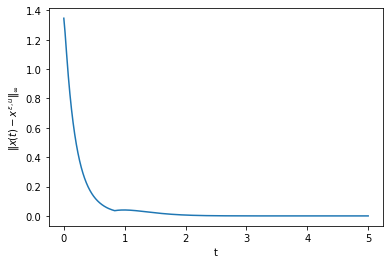}
\includegraphics[width=5cm]{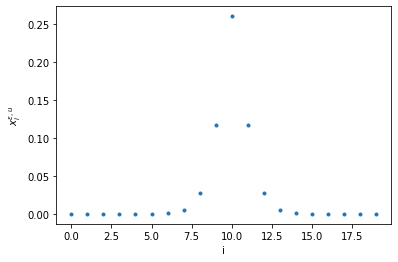}
\caption{{\it{Fig.~left}} : plot of $x_i(\cdot)$. {\it{Fig.~middle}}: plot of the $L^\infty$-norm error $\|x(\cdot)-x^{\eps,u}\|_{L^\infty}$ between the solution and the steady-state. {\it{Fig. right}}: 
plot of the value of species concentrations when $t$ goes to infinity. Data are such that $n = 20$, $\eps=1$, $u = 5$ 
and $\mu_i(s)=\frac{20 s}{a_i+s}$ with $a_i = 1+ \frac{1}{2}(10-i)^2$, $1 \leq i \leq 20$.} 
\label{fig-simu-global}
\end{figure}

\section{Behavior of the coexistence steady state}{\label{sec-dev-lim}}
In this part, we study the behavior of the coexistence equilibrium w.r.t.~the parameter $\eps$.  Based on the implicit function theorem, we give an expansion of $E_{\eps,u}$ up to the first order as $\eps \downarrow 0$ (for a fixed dilution rate $u$) and we also study its limit as  $\eps\rightarrow +\infty$. Recall that $m=\hat \mu(1)$. 
\begin{prop}{\label{prop-expan}}
Suppose that $u \in (0,m)$ and that there is a unique $1 \leq i_0 \leq n$ such that $\lambda_{i_0}(u)=\min_{1 \leq i \leq n} \lambda_i(u)<+\infty$. 
Then, there exist $\xi\in \R^{n}$ and $\sigma>0$ such that when $\eps \downarrow 0$, the following expansion is fulfilled:
\begin{equation}{\label{DL-s}}
E_{\eps,u}=E_{i_0}+\eps (\xi,\sigma)+o(\eps). 
\end{equation}
In addition, the vector $\xi$ and $\sigma$ are given by (with the convention that $\xi_0=\xi_{n+1}=0$):
\begin{equation}{\label{expr-xi}}
\left\{\begin{array}{cll}
\xi_j&=0 & \mathrm{if} \; j\in \{1,...,n\} \backslash \{i_0-1,i_0,i_0+1\}, \vspace{0.1cm}\\
\xi_{j}&=\frac{1-\lambda_{i_0}(u)}{u-\mu_j(\lambda_{i_0}(u))} & \mathrm{if} \; j\in \{i_0-1,i_0+1\} \backslash \{0,n+1\},\vspace{0.1cm}\\
\xi_{i_0}&=-\xi_{i_0-1}-\xi_{i_0+1}-\sigma, \vspace{0.1cm}& \\
\sigma &=-\frac{t_{i_0,i_0}}{\mu'_{i_0}(\lambda_{i_0}(u))}.
\end{array}\right.
\end{equation}
\end{prop}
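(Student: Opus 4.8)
The plan is to characterize $E_{\eps,u}=(x^{\eps,u},s^{\eps,u})$ as the solution of a nonlinear system $G(x,s,\eps)=0$ to which the implicit function theorem applies, and then to extract the first-order term by differentiating that system at $\eps=0$. First I would set up the right system: by Lemma \ref{invaLem} the coexistence steady-state lies on $\Delta$, and summing the $x$-equations in \eqref{sys1} together with the fact that the rows of $T$ sum to zero shows that the $s$-equation in \eqref{eq-steady-state} is automatically satisfied once $B(s,u,\eps)x=0$ and $\sum_j x_j+s=1$ hold. Thus I would define $G:\R^n\times\R\times\R\to\R^{n+1}$ by $G(x,s,\eps):=\big(B(s,u,\eps)x,\ \sum_{j=1}^n x_j+s-1\big)$ and look for a branch $\eps\mapsto(x(\eps),s(\eps))$ of solutions of $G=0$ emanating from $E_{i_0}$ at $\eps=0$. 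At $\eps=0$ the matrix $B(s,u,0)=M(s)-uI_n$ is diagonal, and the hypotheses (uniqueness of the argmin $i_0$ together with $\lambda_{i_0}(u)<+\infty$) guarantee that $E_{i_0}$ solves $G(\cdot,\cdot,0)=0$ with $s_0:=\lambda_{i_0}(u)\in(0,1)$ and $\mu_i(s_0)<u$ for every $i\neq i_0$.

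The key step is to check that the partial Jacobian $D_{(x,s)}G$ at $(E_{i_0},0)$ is invertible. Because $B(\cdot,u,0)$ is diagonal and $x_i^{(0)}=0$ for $i\neq i_0$, the rows $i\neq i_0$ reduce to $(\mu_i(s_0)-u)\,\delta x_i=0$, forcing $\delta x_i=0$ since $\mu_i(s_0)\neq u$; the row $i_0$ reduces to $\mu_{i_0}'(s_0)(1-s_0)\,\delta s=0$, forcing $\delta s=0$ since $\mu_{i_0}'(s_0)(1-s_0)>0$; and the last row then forces $\delta x_{i_0}=0$. Hence the kernel is trivial and the implicit function theorem yields an analytic branch $\eps\mapsto(x(\eps),s(\eps))$ through $E_{i_0}$. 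Since $u<m=u_c(0)$ and $u_c$ is continuous (Proposition \ref{uc decreasing}), one has $u<u_c(\eps)$ for $\eps$ small, so $E_{\eps,u}$ exists by Proposition \ref{disjonction}; by uniqueness of the coexistence steady-state and its convergence to $E_{i_0}$ as $\eps\downarrow 0$, this branch must coincide with $E_{\eps,u}$.

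It remains to differentiate $G(x(\eps),s(\eps),\eps)=0$ at $\eps=0$, writing $x(\eps)=x^{(0)}+\eps\xi+o(\eps)$ and $s(\eps)=s_0+\eps\sigma+o(\eps)$, where $x^{(0)}$ denotes the $x$-component of $E_{i_0}$ (so $x^{(0)}_{i_0}=1-s_0$ and $x^{(0)}_i=0$ for $i\neq i_0$). Using $(Tx^{(0)})_i=t_{i,i_0}(1-s_0)$, the $i$-th equation becomes
$$
\mu_i'(s_0)\,\sigma\, x_i^{(0)}+(\mu_i(s_0)-u)\xi_i+t_{i,i_0}(1-s_0)=0.
$$
For $i\notin\{i_0-1,i_0,i_0+1\}$ this gives $\xi_i=0$; for $i=i_0\pm1$ it gives $\xi_i=(1-s_0)/(u-\mu_i(s_0))$; and for $i=i_0$, where $\mu_{i_0}(s_0)=u$, the $\xi_{i_0}$-term drops out and one solves for $\sigma=-t_{i_0,i_0}/\mu_{i_0}'(s_0)>0$. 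Finally, differentiating the constraint $\sum_j x_j+s=1$ gives $\sum_j\xi_j+\sigma=0$, which determines $\xi_{i_0}=-\xi_{i_0-1}-\xi_{i_0+1}-\sigma$; recalling $s_0=\lambda_{i_0}(u)$ recovers all the formulas in \eqref{expr-xi}.

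I expect the only delicate point to be the identification of the implicit-function branch with the genuine coexistence equilibrium $E_{\eps,u}$, rather than with a spurious solution of $G=0$ bifurcating from some other $E_i$ or from $E_{wo}$. This rests on the uniqueness statement of Proposition \ref{disjonction}(ii) together with the convergence $E_{\eps,u}\to E_{i_0}$ as $\eps\downarrow 0$, the latter following from the facts that $s^{\eps,u}$ solves $\lambda(B(s,u,\eps))=0$ and that, as $\eps\to 0$, this equation degenerates to $\hat\mu(s)=u$, whose unique root in $(0,1)$ is exactly $s_0=\lambda_{i_0}(u)$ under the uniqueness hypothesis on $i_0$. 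Everything else is a routine differentiation of the defining relations.
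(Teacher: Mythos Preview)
Your proof is correct and follows the same implicit-function-theorem-plus-differentiation strategy as the paper, but with a different packaging. You apply the IFT once to the joint system $G(x,s,\eps)=(B(s,u,\eps)x,\ \sum_j x_j+s-1)$, checking invertibility of $D_{(x,s)}G$ at $(E_{i_0},0)$ directly from the diagonal structure of $B(s_0,u,0)$. The paper instead proceeds in two sequential steps: first it applies the IFT to the scalar equation $\varphi(\eps,s):=\lambda(B(s,u,\eps))=0$, invoking the derivative formula \eqref{deriv-lambda} for the Perron root to compute $\partial_s\varphi(0,s^*)=\mu_{i_0}'(s^*)$ and thereby obtain both the $C^1$-dependence of $s^{\eps,u}$ and the value of $\sigma$; then it applies a second IFT to $\psi(\eps,x)=(B(s^{\eps,u},u,\eps)x,\ \sum_j x_j-1)$ to treat $x^{\eps,u}$. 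Your single-shot approach is more elementary in that it avoids any appeal to eigenvalue perturbation theory; the price is the separate argument identifying the IFT branch with the actual coexistence equilibrium $E_{\eps,u}$, which the paper gets essentially for free since $s^{\eps,u}$ is \emph{defined} as the unique root of $\lambda(B(\cdot,u,\eps))=0$ and strict monotonicity in $s$ pins it down inside the IFT neighborhood. In the end both routes arrive at the same first-order linear system and hence the same formulas in \eqref{expr-xi}.
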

\begin{proof} Since $0<u<m$, one has $m-2\eps>u$ for $\eps$ small enough, thus, \eqref{borne-uc} implies that $u<\hat u(\eps)\leq u_c(\eps)$ so that the steady-state $E_{\eps,u}$ exists for every $\eps>0$ small enough.

Now, for convenience, we write $E_{i_0}$ as $E_{i_0}=(x^*,s^*)$. 
We start by proving that the mapping $\eps\mapsto s^{\eps,u}$ is of class $C^1$ in some right neighborhood of $\eps=0$. Doing so, let $\theta>0$ and let us define the open set 
$D_\theta:=(-\theta,1)\times (0,1)$. Consider  the $C^1$ mapping $\tilde B:D_\theta\rightarrow \R^{n\times n}$ given by
$\tilde B(\eps,s):=B(s,u,\eps)$ for $(\eps,s)\in D_\theta$ (here $u=\mu_{i_0}(s^*)>0$ is fixed). 
Note that for every $(\eps,s)\in D_\theta$, the matrix $\tilde B(\eps,s)$ is symmetric quasi-positive and that for $\eps=0$, zero is the largest and simple eigenvalue of $\tilde B(0,s^*)=M(s^*)-u I_n$  
(observe that $\tilde B(0,s^*)$ is diagonal with $n$ distinct eigenvalues). It follows that $\lambda(\cdot)$ is analytic as a function of its $n^2$ coefficients in some neighborhood  
of $\tilde B(0,s^*)$ in the space of symmetric matrices.  
Since $\tilde B(\cdot,\cdot)$ is of class $C^1$ w.r.t.~$(\eps,s)$, 
there are $\theta'>0$ and $\nu>0$ small enough such that the composition  
$$
(\eps,s)\mapsto \vphi(\eps,s):=\lambda(\tilde B(\eps,s))
$$
is of class $C^1$ over $(-\theta',\theta')\times (s^*-\nu,s^*+\nu)$. 
For $\eps=0$, the unitary eigenvector of $\tilde B(0,s^*)$ for the zero eigenvalue is the $i_0$-th vector $w=e_{i_0}$ of the canonical basis of $\R^n$.  
It follows from \eqref{deriv-lambda} that 
$\frac{\partial \vphi}{\partial s}(0,s^*)=\mu'_{i_0}(s^*)>0$. 
So, we can apply the implicit function theorem locally around $(0,s^*)$. Hence, the mapping $\eps \mapsto s^{\eps,u}$ is of class $C^1$ over $(-\eps_0,\eps_0)$, and in particular 
in some right neighborhood 
of $\eps=0$. In addition, one has:
$$
\forall \eps \in (-\eps_0,\eps_0), \quad \vphi(\eps,s^{\eps,u})=0.
$$
By differentiating the preceding equality w.r.t.~$\eps$ and letting $\eps\downarrow 0$, we find
\begin{equation}{\label{deriv-lambda-tmp}}
\sum_{1 \leq i,j \leq n} \frac{\partial \lambda(\tilde B(0,s^*))}{\partial \tilde b_{i,j}}\frac{\partial \tilde b_{i,j}(0,s^*)}{\partial \eps}+\mu'_{i_0}(s^*)\frac{d s^{\eps,u}}{d \eps}_{|_{\eps=0}}=0,
\end{equation}
where $\tilde b_{i,j}(0,s^*)$ denote the $n^2$ entries of $\tilde B(0,s^*)$. 
Combining \eqref{deriv-lambda} and \eqref{deriv-lambda-tmp}, we obtain  $$t_{i_0,i_0}+\mu'_{i_0}(s^*)\frac{d s^{\eps,u}}{d \eps}_{|_{\eps=0}}=0,$$ which implies 
$\frac{d s^{\eps,u}}{d \eps}_{|_{\eps=0}}=\sigma$ and the desired expansion of $s^{\eps,u}$ up to the first order as in \eqref{DL-s}-\eqref{expr-xi}. 

Let us now turn to the expansion of $x^{\eps,u}$ w.r.t.~$\eps$. Doing so, let us consider the $C^1$ mapping 
$\psi:(-\theta,\theta)\times \R^n \rightarrow \R^{n+1}$ defined as
$$
\psi(\eps,x):=\Big{(}\tilde B(\eps,s^{\eps,u})x,\sum_{j=1}^n x_j -1\Big{)}, 
$$
whose differential w.r.t.~$x$ at $(0,x^*)$ satisfies
$$
D_x\psi(0,x^*)h=\Big{(}\tilde B(0,s^*)h,\sum_{j=1}^n h_j\Big{)}. 
$$
We can check that the kernel of $D_x\psi(0,x^*)$ is reduced to $\{0\}$, so, $D_x\psi(0,x^*)$ is invertible. Hence, by the implicit function theorem, we can conclude that $\eps\mapsto x^{\eps,u}$ is of class $C^1$ in some neighborhood of $\eps=0$. To obtain the desired expansion of $x^{\eps,u}$, let us write $x^{\eps,u}=x^*+\eps d+o(\eps)$. Since 
$\sum_{j=1}^n x^{\eps,u}_j+s^{\eps,u}=1$, one has 
\begin{equation}{\label{expr-xeps-tmp}}
\sigma+\sum_{j=1}^n d_j=0. 
\end{equation}
Expanding $B(s^{\eps,u},u,\eps)$ w.r.t.~$\eps$ up to the first order, we get: 
\begin{align*}
0=B(s^{\eps,u},u,\eps)x^{\eps,u}&=B(s^*+\sigma\eps+o(\eps),u,\eps)x^{\eps,u}\\
&=(M(s^*+\sigma\eps+o(\eps))-u I_n+\eps T)x^{\eps,u}\\
&=(M(s^*)-u I_n+\eps T + \sigma M'(s^*) \eps + o(\eps))(x^*+\eps d + o(\eps))\\
&=\big{(}T x^*+\sigma M'(s^*)x^*+(M(s^*)-u I_n)d\big{)}\eps+o(\eps),
\end{align*}
using the relation $(M(s^*)-u I_n)x^*=0$ in the last equality. 
%
Hence, we deduce that 
$$
T x^*+\sigma M'(s^*)x^*+(M(s^*)-u I_n)d=0,
$$
which gives
$$
(Tx^*)_j+\sigma \mu'_j(s^*)x^*+(\mu_j(s^*)-u)d_j, \quad 1 \leq j \leq n. 
$$
In the case where $1<i_0<n$, we obtain \eqref{expr-xi} for $j\in \{1,...,n\}\backslash \{i_0\}$ from the preceding equation. For $j=i_0$, \eqref{expr-xeps-tmp} gives \eqref{expr-xi}. 
A similar computation gives \eqref{expr-xi} whenever $i_0=1$ or $i_0=n$, which concludes the proof. %
\end{proof}
%
From Proposition \ref{prop-expan}, species of index $i_0$ is the only one with a positive value at the zero order. Observe that it satisfies the inequality $\xi_{i_0}<\lambda_{i_0}(u)$. In addition,  
only neighbors of $i_0$ ({\it{i.e.}}, species with index $i_0-1$ or $i_0+1$) are significant up to the first order. 
Species with index $j\notin \{i_0-1,i_0,i_0+1\}$ are (asymptotically) not significant w.r.t.~species with index $i_0-1$ and $i_0+1$. 
We now turn to the case where $\eps$ tends to $+\infty$.
\begin{prop}
For every $u\in (0,\bar \mu(1))$, the point $E_{\eps,u}$ has a limit when $\eps\rightarrow +\infty$ and 
\begin{equation}{\label{limit-infinity}}
\lim_{\eps \rightarrow +\infty} E_{\eps,u} = \Big{(}\frac{1-\bar \mu^{-1}(u)}{n}a,\bar \mu^{-1}(u)\Big{)}.
\end{equation}
\end{prop}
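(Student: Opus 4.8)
The plan is to treat the substrate component $s^{\eps,u}$ and the biomass component $x^{\eps,u}$ of the equilibrium separately, exploiting that $E_{\eps,u}$ is entirely determined by the two conditions $\lambda(B(s^{\eps,u},u,\eps))=0$ and $x^{\eps,u}=\nu\,a^{\eps,u}$ with $\sum_j x_j^{\eps,u}=1-s^{\eps,u}$ (recall the proof of Proposition \ref{disjonction}). First I would note that $E_{\eps,u}$ is indeed defined for \emph{every} $\eps>0$: since $u\in(0,\bar\mu(1))$, the bound \eqref{borne-uc} gives $u_c(\eps)\geq\hat u(\eps)\geq\bar\mu(1)>u$, so $\lambda(B(1,u,\eps))=u_c(\eps)-u>0$ and Proposition \ref{disjonction}(ii) applies.

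For the substrate, I would divide the defining equation by $\eps$ and write
$$
0=\lambda(B(s^{\eps,u},u,\eps))=\eps\,\lambda\Big(T+\tfrac1\eps\big(M(s^{\eps,u})-uI_n\big)\Big).
$$
Since the Perron root of $T$ is the \emph{simple} eigenvalue $0$ with unit Perron vector $a/\sqrt n$, the analyticity of $\lambda(\cdot)$ together with the first-order formula \eqref{deriv-lambda} yields, exactly as in \eqref{uc limite},
$$
\lambda(B(s,u,\eps))=\frac{a^\top\big(M(s)-uI_n\big)a}{a^\top a}+O\!\big(\tfrac1\eps\big)=\bar\mu(s)-u+O\!\big(\tfrac1\eps\big),
$$
the remainder being \emph{uniform} for $s\in[0,1]$ (because the spectral gap of $T$ is a fixed positive constant and $\|M(s)-uI_n\|$ is bounded on $[0,1]$). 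Evaluating at $s=s^{\eps,u}$ forces $\bar\mu(s^{\eps,u})=u+O(1/\eps)$, and since $\bar\mu$ is continuous and strictly increasing with $\bar\mu^{-1}(u)\in(0,1)$, I conclude $s^{\eps,u}\to\bar\mu^{-1}(u)$.

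For the biomass, I would use that $x^{\eps,u}$ points in the direction of the positive Perron vector $a^{\eps,u}$ of $B(s^{\eps,u},u,\eps)$, equivalently of $\tfrac1\eps B(s^{\eps,u},u,\eps)=T+\tfrac1\eps(M(s^{\eps,u})-uI_n)$. As $\eps\to+\infty$ this matrix converges to $T$ (the factor $s^{\eps,u}$ stays bounded and $M$ is continuous), and because $0$ is a simple eigenvalue of $T$, the associated unit positive eigenvector depends continuously on the matrix; hence $a^{\eps,u}/\|a^{\eps,u}\|\to a/\sqrt n$. Normalizing by the constraint $\sum_j x_j^{\eps,u}=1-s^{\eps,u}$ gives $x^{\eps,u}=(1-s^{\eps,u})\,a^{\eps,u}/\sum_j a^{\eps,u}_j\to(1-\bar\mu^{-1}(u))\,a/n$, which combined with the previous paragraph produces the claimed limit \eqref{limit-infinity}.

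The delicate points, and the ones I would write out carefully, are the two uniformity/continuity claims that replace pointwise perturbation statements: that the $O(1/\eps)$ remainder in the eigenvalue expansion is uniform in $s\in[0,1]$, and that the Perron eigenvector converges. Both rest on the \emph{same} structural fact, namely that $0$ is a simple eigenvalue of $T$ separated from the rest of its spectrum by a fixed gap; standard perturbation theory for a simple eigenvalue of a symmetric matrix then controls both the eigenvalue error and the eigenvector, uniformly over the compact range of $s$ and over bounded perturbations $M(s)-uI_n$. I expect this to be the main obstacle, the rest being a direct consequence of the characterization of $E_{\eps,u}$ and the monotonicity of $\bar\mu$.
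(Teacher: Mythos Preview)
Your argument is correct and follows the same overall scheme as the paper (treat $s^{\eps,u}$ and $x^{\eps,u}$ separately via the characterization in Proposition~\ref{disjonction}), but the execution for the substrate component is genuinely different. The paper first shows that $\eps\mapsto s^{\eps,u}$ is monotone (by differentiating the implicit relation $\lambda(B(s^{\eps,u},u,\eps))=0$ and using $v^\top Tv\le 0$), so that a limit $s^u$ exists; it then identifies $s^u$ by splitting $\eps\lambda\big(T+\tfrac1\eps M(s^{\eps,u})\big)$ into a Lipschitz remainder and the term $\eps\lambda\big(T+\tfrac1\eps M(s^u)\big)$, expanded as in~\eqref{uc limite}. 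You bypass the monotonicity step entirely by arguing that the first-order expansion $\lambda(B(s,u,\eps))=\bar\mu(s)-u+O(1/\eps)$ holds \emph{uniformly} in $s\in[0,1]$, which immediately forces $\bar\mu(s^{\eps,u})\to u$ and hence $s^{\eps,u}\to\bar\mu^{-1}(u)$. Your route is slightly more economical for the limit itself, while the paper's route yields the extra monotonicity information along the way. For $x^{\eps,u}$ the two arguments are essentially the same: the paper extracts a convergent subsequence of $(a^{\eps,u})_\eps$, passes to the limit in $T\tilde a^u=0$, and invokes uniqueness of subsequential limits; you invoke continuity of the simple-eigenvalue eigenvector directly. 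Both rely on the simplicity of the Perron root of $T$, which is exactly the structural point you single out.
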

\begin{proof}
Using the implicit function theorem locally around each $\eps>0$, 
we deduce that the derivative of $\eps \mapsto s^{\eps,u}$ w.r.t.~$\eps$ exists and is non-negative (see the proof of Proposition \ref{uc decreasing}). Hence, $s^{\eps,u}$ is non-increasing, and thus it admits a limit $s^u$ as $\eps$ goes to infinity because $s^{\eps,u}\in [0,1]$, for every $\eps>0$. 
By definition of $s^{\eps,u}$ we have:
\begin{align*}
    0 &= \lambda \left( B(s^{\eps,u},u,\eps) \right) 
      = \eps \lambda \left(T + \frac{M(s^{\eps,u})}{\eps}  \right) - u \\
      &= \eps \left[ \lambda \left(T + \frac{M(s^{\eps,u})}{\eps}  \right) - \lambda \left(T + \frac{M(s^u)}{\eps}  \right) \right] + \eps \lambda \left(T + \frac{M(s^u)}{\eps}  \right) - u\\
      & = \eps \left[ \lambda \left(T + \frac{M(s^{\eps,u})}{\eps}  \right) - \lambda \left(T + \frac{M(s^u)}{\eps}  \right) \right] + \frac{1}{n} \sum\limits_{j=1}^n \mu_j(s^u) - u + o(1),
\end{align*}
using the same expansion as in \eqref{uc limite}. 
As $\lambda(\cdot)$ is of class $C^1$ in some neighborhood of $T$, it is in particular 
locally Lipschitz, so, the first term goes to 0 as $\eps$ goes to infinity (using that $M(\cdot)$ is also of class $C^1$ and that $s^{\eps,u} \rightarrow s^u$ as $\eps \rightarrow +\infty$).  
Hence, one must have  $\bar \mu(s^u)=u$, that is $s^u=\bar \mu^{-1}(u)$.  
Let us now turn to the limit of $x^{\eps,u}$ as $\eps\rightarrow +\infty$. 
From the proof of Proposition \ref{disjonction}, the vector $x^{\eps,u}$ satisfies the system
\begin{equation}{\label{tmp-limit-infinity}}
M(s^{\eps,u})x^{\eps,u}-ux^{\eps,u}+\eps T x^{\eps,u} =0,
\end{equation}
and it is proportional to $a^{\eps,u}$:
\begin{equation}{\label{limit-tmp1}}
x^{\eps,u} = \frac{1-s^{\eps,u}}{\sum_{j=1}^n a^{\eps,u}_j}a^{\eps,u}.
\end{equation}
Since for every $\eps>0$, $\|a^{\eps,u}\|=1$, there is $\tilde a^u\in \R^n$ 
with $\|\tilde a^u\|=1$  such that, up to a sub-sequence, one has $a^{\eps,u} \rightarrow \tilde a^u$.  By passing to the limit in \eqref{tmp-limit-infinity}, we find that $T \tilde a^u=0$, thus $\tilde a^u=\frac{a}{\sqrt{n}}$. Now, $\tilde a^u$ is also the limit of every converging sub-sequence of $(a^{\eps,u})_\eps$, hence $\tilde a^u$ is the limit of $(a^{\eps,u})_\eps$.  
Letting $\eps\rightarrow +\infty$ in \eqref{limit-tmp1} then gives \eqref{limit-infinity}, which ends the proof. 
\end{proof}
Even if the case $\eps\rightarrow +\infty$ may have no meaning from an application point of view, this result shows that 
species are asymptotically uniformly distributed.
\section{Persistence of all the species}{\label{sec-persist}}
In this section, we give an extension of \cite[Theorem 3]{deleen}  showing that each species (individually) is persistent. We refer to \cite{ST11,Thieme93,Thieme99} for the mathematical theory of persistence. 
 The persistence result in \cite{deleen} is related to the total biomass ({\it{i.e.}}, the sum of the concentrations of the species). In our setting, it can be stated as follows. 
\begin{thm}[\cite{deleen}]{\label{deleen-prop}} 
There is $c>0$ such that for every $(\eps,u)\in \R_+^* \times (0,u_c(\eps))$ and 
every initial condition in the set $\mathcal{D}$, the unique solution of \eqref{sys1} associated with this initial condition satisfies  
\begin{equation}{\label{borneinfdeleen}}
\liminf_{t\rightarrow +\infty} \; \sum_{j=1}^n x_j(t)\geq \beta_{\eps,u}:=c  \frac{\min_{1 \leq j \leq n} v_j^{\eps,u}}{\max_{1 \leq j \leq n} v_j^{\eps,u}},  
\end{equation}
where $v^{\eps,u}$ is the Perron vector associated with the matrix $B(1,u,\eps)$.
\end{thm}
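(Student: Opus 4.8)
The plan is to exploit the positivity structure of the problem through the Perron vector of $B(1,u,\eps)$ and to show that the washout acts as a uniform repeller for the total biomass. Since $\Delta$ is invariant and attractive (Lemma \ref{invaLem}), I would first reduce the analysis to $\Delta$, so that $s=1-\sum_{j=1}^n x_j$ and the dynamics reads $\dot x=B(1-\sum_j x_j,u,\eps)x$; the general case then follows by attractiveness up to an asymptotically negligible error. Set $\zeta:=\lambda(B(1,u,\eps))>0$, which is positive precisely because $u<u_c(\eps)$ (Proposition \ref{disjonction}), and introduce the linear functional $V(x):=(v^{\eps,u})^\top x$, where $v^{\eps,u}>0$ is the unit Perron vector. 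The key algebraic fact is that $B(1,u,\eps)$ is symmetric, so $v^{\eps,u}$ is simultaneously a left and a right eigenvector.

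The central computation is a differential inequality for $V$ along trajectories. Writing $B(s,u,\eps)=B(1,u,\eps)+(M(s)-M(1))$ and using $(v^{\eps,u})^\top B(1,u,\eps)=\zeta\,(v^{\eps,u})^\top$, I obtain
\begin{equation*}
\dot V=\zeta V+\sum_{j=1}^n v_j^{\eps,u}\bigl(\mu_j(s)-\mu_j(1)\bigr)x_j\geq \Bigl(\zeta-L\sum_{j=1}^n x_j\Bigr)V,
\end{equation*}
where $L:=\max_{1\leq j\leq n}\sup_{[0,1]}\mu_j'$ is a finite Lipschitz constant of the Monod kinetics and I have used $0\leq \mu_j(1)-\mu_j(s)\leq L(1-s)=L\sum_k x_k$ on $\Delta$. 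Consequently, as soon as $\sum_j x_j\leq \eta:=\zeta/(2L)$, one has $\dot V\geq (\zeta/2)V>0$.

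From here I would run a quantitative weak-to-strong persistence argument. For the weak repulsion, if $\sum_j x_j<\eta$ held for all large $t$, then $V$ would grow at least like $e^{\zeta t/2}$, contradicting the bound $V\leq (\max_j v_j^{\eps,u})\sum_j x_j\leq \max_j v_j^{\eps,u}$ valid on $\mathcal D'$; hence $\limsup_t \sum_j x_j\geq \eta$ (positivity $V(t)>0$ for $t>0$ being guaranteed by Property \ref{propri-pos}). For strong persistence with an explicit bound, I use the sandwich $\min_j v_j^{\eps,u}\sum_j x_j\leq V\leq \max_j v_j^{\eps,u}\sum_j x_j$ and set $V_*:=\eta\,\min_j v_j^{\eps,u}$: then $V\leq V_*$ forces $\sum_j x_j\leq \eta$, hence $\dot V>0$, so $\{V\geq V_*\}$ is forward invariant; combining this with $\limsup_t V\geq V_*$ shows that every nontrivial trajectory eventually enters and remains in $\{V\geq V_*\}$. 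Therefore, for all large $t$,
\begin{equation*}
\sum_{j=1}^n x_j(t)\geq \frac{V(t)}{\max_j v_j^{\eps,u}}\geq \frac{V_*}{\max_j v_j^{\eps,u}}=\frac{\zeta}{2L}\,\frac{\min_j v_j^{\eps,u}}{\max_j v_j^{\eps,u}},
\end{equation*}
which is exactly the announced lower bound, with the ratio of the extremal Perron components appearing through the comparison between $V$ and the biomass.

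Alternatively, the weak-to-strong step can be packaged by the abstract uniform persistence theorems of \cite{ST11,Thieme93}: the flow is dissipative on the compact forward-invariant set $\mathcal D'$, the boundary invariant set reduces to the single, isolated and acyclic washout, which is a uniform weak repeller since $\zeta>0$; this yields uniform strong persistence, and the Perron-functional comparison then converts it into the explicit constant. I expect the main obstacle to be precisely this last quantitative step, namely pinning down the admissible constant $c$ in $\beta_{\eps,u}=c\,\min_j v_j^{\eps,u}/\max_j v_j^{\eps,u}$ and controlling its dependence on the data: the natural threshold $\eta=\zeta/(2L)$ degenerates as $u\uparrow u_c(\eps)$, in accordance with the coexistence biomass $1-s^{\eps,u}$ tending to $0$ in that limit, so the delicate accounting lies in isolating the structural ratio from the repulsion rate $\zeta$ in the final constant.
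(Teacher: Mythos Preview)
The paper does not give its own proof of this statement: it is quoted verbatim from \cite{deleen} and used as a black box (see the header ``\textbf{Theorem} [\cite{deleen}]''). So there is no in-paper argument to compare against. Your approach, however, is precisely the one used in \cite{deleen}: introduce the linear Perron functional $V(x)=(v^{\eps,u})^\top x$, exploit the symmetry of $B(1,u,\eps)$ to get $\dot V\geq(\zeta-L\sum_j x_j)V$, and turn this into uniform repulsion of the washout via the sandwich $\min_j v_j^{\eps,u}\sum_j x_j\leq V\leq \max_j v_j^{\eps,u}\sum_j x_j$. The forward-invariance of $\{V\geq V_*\}$ and the weak-to-strong step are both correct as you wrote them.

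The only point that deserves comment is the one you flag yourself: the constant you produce is $c=\zeta/(2L)$, which depends on $(\eps,u)$ through $\zeta=\lambda(B(1,u,\eps))$, whereas the statement is phrased with a universal $c$. Your suspicion is well founded, and in fact one can sharpen it: since $B(1,u,\eps)=M(1)+\eps T-uI_n$ and subtracting $uI_n$ does not change eigenvectors, the Perron vector $v^{\eps,u}$ is \emph{independent of $u$}. Hence the ratio $\min_j v_j^{\eps,u}/\max_j v_j^{\eps,u}$ is a fixed positive number $r(\eps)$, and a truly universal $c$ would give $\liminf_t\sum_j x_j\geq c\,r(\eps)>0$ uniformly in $u\in(0,u_c(\eps))$. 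But the coexistence steady state satisfies $\sum_j x_j^{\eps,u}=1-s^{\eps,u}\to 0$ as $u\uparrow u_c(\eps)$, so no such uniform bound can hold. The statement should therefore be read with $c$ allowed to depend on the parameters (as in your $\zeta/(2L)$), and this is consistent with the only use the paper makes of the result (Lemma~\ref{lem-robust}), where $u$ ranges over a compact interval $[0,u_1]$ with $u_1<u_c(\eps)$, on which $\zeta$ is bounded below. Your argument delivers exactly what is needed there.
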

\begin{rem} It follows that for every $(\eps,u)\in \R_+^* \times (0,u_c(\eps))$,  one has (recall \eqref{repulsif}):
\begin{equation}{\label{limsups}}
c_u \leq \liminf_{t \rightarrow +\infty} s(t) \leq \limsup_{t \rightarrow +\infty} s(t) \leq 1-\beta_{\eps,u}<1,
\end{equation}
for every solution to \eqref{sys1} starting in $\mathcal{D}$. Section \ref{convergence0} investigates the particular case where $\eps>0$ and $u=0$. 
\end{rem}
Before proving that each species is uniformly persistent, let us recall some definitions of \cite{Buttler,Freedman} about the notion of persistence. 
Hereafter, the interior, resp.~the boundary of a set $A\subset \R^n$ is denoted by $\mathrm{Int}(A)$, resp.~$\partial A$, $B(x,r)$ denotes 
the open ball of center $x\in \R^n$ and radius $r>0$. Finally, for every $r>0$, we define
$S(A,r):=\{x\in \R^n \; ; \; d(x,A)\leq r\}$ where $d$ is a distance over $\R^n$ and $d(x,A):=\inf_{a\in A} d(x,a)$. 
Consider now a differential equation $\dot{x}=f(x)$ where $f:\R^n \rightarrow \R^n$ is smooth and  such that every solution to this equation is global. Let us  denote by $\mathcal{F}$ the associated flow.
\begin{defi}{\label{defi-omega}}
Given two non-empty subsets $\mathcal{Y},\mathcal{Z}\subset \R^n$, the sets $W^\pm(\mathcal{Y})$ stand respectively for 
$$
W^+(\mathcal{Y}):=\{x \in \mathcal{Z} \; ; \; \omega(x)\subset \mathcal{Y}\} \; ; \; W^-(\mathcal{Y}):=\{x \in \mathcal{Z} \; ; \; \alpha(x)\subset \mathcal{Y}\}, 
$$
where $\omega(x)$ and $\alpha(x)$ denote respectively the $\omega$-limit and $\alpha$-limit sets of some point $x\in \R^n$ for the flow $\mathcal{F}$. 
\end{defi}
\begin{defi}{\label{defi-persis}} 
Let  $E$ be a non-empty closed subset of $\R^n$ 
that is forward invariant by $\mathcal{F}$. We say that $\mathcal{F}$ is uniformly persistent related to $E$ 
if there is $\kappa>0$ such that for every initial condition in $\mathrm{Int}(E)$, the corresponding solution $x(\cdot)$ satisfies
\begin{equation}{\label{persis-henri}}
\liminf_{t\rightarrow +\infty} d(x(t),\partial E)>\kappa, 
\end{equation}
where $d$ is a distance over $\R^n$
\end{defi}
In the next theorem, we show that each species is uniformly persistent. The proof is based on \eqref{borneinfdeleen}. 
\begin{thm}{\label{thm-main-persist}}
For every $(\eps,u)\in \R_+^* \times (0,u_c(\eps))$, there exists $\gamma_{\eps,u}>0$ such that for every initial condition in the set $\mathcal{D}$, the unique 
solution of \eqref{sys1} associated with this initial condition satisfies 
\begin{equation}{\label{ineg-final2}}
\liminf_{t\rightarrow +\infty}x_i(t)\geq \gamma_{\eps,u}, 
\end{equation}
for every $1 \leq i \leq n$. 
\end{thm}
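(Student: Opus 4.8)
The plan is to promote the lower bound on the total biomass, established in Theorem \ref{deleen-prop}, into an individual lower bound for each species by exploiting the coupling provided by the mutation term $\eps T$. The heuristic is that if the total biomass stays bounded away from zero, then at least one species must carry a non-negligible concentration at all large times, and the mutation term forces this mass to ``spread'' to the neighboring species, and hence by induction to every species.

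First I would fix $(\eps,u)\in \R_+^* \times (0,u_c(\eps))$ and an initial condition in $\mathcal{D}$. By Theorem \ref{deleen-prop} there is a time $t_0$ after which $\sum_{j=1}^n x_j(t)\geq \beta_{\eps,u}/2 =: \beta$, and by the invariance/attractivity of $\Delta$ (Lemma \ref{invaLem}) together with the substrate bound \eqref{limsups}, one also has $s(t)\in [c_u, 1-\beta_{\eps,u}]$ and $\sum_j x_j(t)\leq 2$ for $t\geq t_0$; in particular the reaction rates $\mu_j(s(t))$ lie in a fixed compact subinterval of $(0,m)$ independent of the trajectory. The upshot is that on $[t_0,+\infty)$ the drift coefficients $\mu_i(s(t))-u$ appearing in $\dot x_i = (\mu_i(s(t))-u)x_i + \eps(Tx)_i$ are uniformly bounded, say $|\mu_i(s(t))-u|\leq L$ for a constant $L=L(\eps,u)$.

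The core of the argument is a quantitative version of the propagation used in Property \ref{propri-pos}. Since $\sum_j x_j \geq \beta$ on $[t_0,\infty)$, there is always an index $i$ with $x_i\geq \beta/n$. The comparison $\dot x_{i+1}\geq (\mu_{i+1}(s)-u-2\eps)x_{i+1} + \eps x_i \geq -(L+2\eps)x_{i+1} + \eps x_i$ shows, via a Gronwall/variation-of-constants estimate over a short time window, that a neighbor of a large species cannot stay uniformly small: if $x_i \geq \delta$ on an interval of fixed length $\tau$, then $x_{i+1}$ (and similarly $x_{i-1}$) is bounded below by $\eta(\delta)>0$ at the end of that interval. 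The difficulty is that the ``large'' index $i$ may move in time, so one cannot simply iterate a single fixed chain. I would therefore argue by contradiction on the persistence statement: suppose some species $i$ has $\liminf_{t\to\infty} x_i(t)=0$; extract a sequence of times along which $x_i\to 0$, and use the uniform bounds to pass to a limiting (entire, bounded) trajectory of a limiting autonomous system. On this limit trajectory the total biomass is still $\geq\beta$ while the $i$-th coordinate touches $0$; applying the propagation argument of Property \ref{propri-pos} to the limit flow forces the whole limit trajectory to be identically zero, contradicting $\sum_j x_j\geq \beta$.

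The main obstacle I anticipate is making the ``moving large index'' issue rigorous and extracting a uniform constant $\gamma_{\eps,u}$ rather than a trajectory-dependent one. The cleanest route is the dynamical-systems/limit-set approach: work with the compact invariant attractor $\Delta$, show that the $\omega$-limit set of any trajectory starting in $\mathcal{D}$ lies in $\{\,\sum_j x_j\geq\beta\,\}$ and is invariant, and then prove that no point of an $\omega$-limit set can have a vanishing coordinate. Indeed, if an $\omega$-limit point had $x_i=0$, invariance would confine the entire orbit through it to the face $\{x_i=0\}$, but Property \ref{propri-pos} shows that the only invariant trajectory contained in any coordinate face is the origin, which is excluded by $\sum_j x_j\geq\beta$. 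Uniformity of $\gamma_{\eps,u}$ then follows from compactness of the global attractor together with the fact that the lower bound $\beta_{\eps,u}$ in Theorem \ref{deleen-prop} is uniform over all initial data in $\mathcal{D}$.
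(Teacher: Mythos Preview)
Your route differs from the paper's. The paper does not propagate mass along the chain at all; instead it invokes an abstract uniform-persistence theorem (Theorem~4.3 of \cite{Freedman}). It works on the compact sets $E_\eta=\{(x,s)\in\R_+^n\times[0,1]:\sum_j x_j+s\le 1+\eta\}$, identifies the maximal invariant set in $\partial E_\eta$ as $N=\{0_{\R^n}\}\times[0,1+\eta]$ via Property~\ref{propri-pos}, uses Theorem~\ref{deleen-prop} to show $W^+(N)=N$ (hence $N$ is acyclic and isolated), and then reads off the per-species bound from the conclusion \eqref{persis-henri}. Your proposal replaces this machinery by a direct $\omega$-limit set / global-attractor argument, which is more self-contained; the paper's version, on the other hand, packages the ``moving large index'' difficulty entirely inside the cited theorem.

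There is, however, a real gap at the crux of your argument. You claim that if an $\omega$-limit point $p$ has $p_i=0$, then ``invariance would confine the entire orbit through it to the face $\{x_i=0\}$''. This is false: the face $\{x_i=0\}$ is \emph{not} forward invariant for \eqref{sys1}; on the contrary, Property~\ref{propri-pos} says that if $p\neq 0$ the forward orbit from $p$ enters the open orthant immediately. Invariance of $\omega(x_0)$ only tells you the full orbit through $p$ stays in $\omega(x_0)$, not in the face. The argument can be repaired, but by going \emph{backward} in time: since $\omega(x_0)$ is compact, fully invariant, and contained in $\R_+^n\times[0,1]$, there is an entire orbit through $p$ lying in $\R_+^n$. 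At $p$ one has $\dot x_i=\eps(p_{i-1}+p_{i+1})\ge 0$; were this strictly positive, the backward orbit would push $x_i$ below $0$, which is impossible. Hence $p_{i-1}=p_{i+1}=0$, and iterating gives $p=0$, contradicting $\omega(x_0)\subset\{\sum_j x_j\ge\beta_{\eps,u}\}$ from Theorem~\ref{deleen-prop}. With this correction your compactness step (the global attractor on $\Delta$ intersected with $\{\sum_j x_j\ge\beta_{\eps,u}\}$ is compact and lies in $(0,\infty)^n$, yielding a uniform $\gamma_{\eps,u}$) goes through. So the strategy is sound, but the justification you wrote for the central step does not stand as stated.
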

\begin{proof} 
Fix $\eps>0$, $u\in (0,u_c(\eps))$, and consider the sets $E_\eta$ given by
$$
E_\eta:=\Big{\{}(x,s)\in \R_+^n \times [0,1] \; ; \;  s+\sum_{j=1}^n x_j \leq 1+\eta\Big{\}},
$$
where $\eta>0$. Obviously, $E_\eta$ is a closed subset of $\R_+^n \times [0,1]$ that is  positively invariant by \eqref{sys1}. In addition, it is easily seen that its boundary satisfies
$$
\partial E_\eta=\Big{\{}
(x,s)\in \R_+^n \times [0,1] \; ; \; \exists \, i\in\{1,...,n\}, \; x_i =0 \; \;  \mathrm{or} \; \; s=0 \; \;  \mathrm{or}\; \; \sum_{j=1}^n x_j+s=1+\eta \Big{\}}.
$$
Let $(x^0,s^0)\in \mathcal{D}$ be an initial condition and let us denote by $(x(\cdot),s(\cdot))$ the corresponding solution of \eqref{sys1}. From Property \ref{propri-pos}, one has  $x_i(t)>0$ for every $t>0$ and $1 \leq i \leq n$. In addition, $b(t)\rightarrow 1$ as $t\rightarrow +\infty$ and $s(\cdot)$ cannot approach $0$ because of \eqref{limsups}. We deduce in particular that  
$S(\partial E_\eta,1)\cap \mathrm{Int}(E_\eta)$ is point dissipative (see \cite{Buttler,Freedman}), which means the following:
$$
\forall (x^0,s^0)\in S(\partial E_\eta,1)\cap \mathrm{Int}(E_\eta), \; \forall t>0, \;  (x(t),s(t))\in 
\mathrm{Int}(E_\eta).
$$
We now show that the maximal invariant subset $N$ of $\partial E_\eta$ by \eqref{sys1} is 
acyclic\footnote{This property amounts to verify that $N^c\cap W^{-}(N)\cap W^+(N)=\varnothing$ where $N^c$ is the complement of $N$ in $E_\eta$, see \cite{Buttler,Freedman} or \cite{Hirsch00,Thieme99} for a more detailed definition.} and isolated. First, observe that 
$N=\{0_{\R^n}\} \times [0,1+\eta]$ using  Property \ref{propri-pos}. Considering now the distance $d$ over $\R_+\times [0,1]$ defined as
$$
d((x,s),(x',s')):=\sum_{j=1}^n |x_j-x'_j|+|s-s'|,
$$
for $(x,s),(x',s')\in \R_+\times [0,1]$, one has using \eqref{borneinfdeleen}
$$
\liminf_{t\rightarrow +\infty} d((x(t),s(t)),N)=
\liminf_{t\rightarrow +\infty} \sum_{j=1}^n x_j(t)\geq \beta_{\eps,u}>0, 
$$
for every solution of \eqref{sys1} starting in $\mathcal{D}$. 
If now $N$ and $\partial E_\eta$ stand respectively for $\mathcal{Y}$ and $\mathcal{Z}$ in Definition \ref{defi-omega}, the previous inequality implies that $W^+(N)=N$. Hence $N$ is necessarily acyclic.  

Finally, the set $N$ is isolated because for every initial condition $(x^0,s^0)\in \mathcal{D}\backslash N$, \eqref{deleen-prop} implies the  
existence of $t_0\geq 0$ such that 
$$
\forall t\geq t_0, \; d((x(t),s(t)),N)= \sum_{j=1}^n x_j(t)\geq \frac{\beta_{\eps,u}}{2}>0. 
$$
We are now in a position to use \cite[Theorem 4.3]{Freedman} which asserts that the flow defined by \eqref{sys1} is uniformly persistent related to the set $E_\eta$ provided that there is $\delta>0$ such that 
\begin{equation}{\label{tmp-uniform-persis}}
W^+(N)\cap S(\partial E_\eta,\delta)\cap  \mathrm{Int}(E_\eta) =\varnothing. 
\end{equation}
But, \eqref{tmp-uniform-persis} is clearly verified with $\delta:=1$ because $W^+(N)\cap S(\partial E_\eta,\delta) \subset N \subset \partial E_\eta$, so we 
have proved that for every $\eta>0$, the flow defined by \eqref{sys1} is uniformly persistent related to the set $E_\eta$. To conclude the proof, fix $\eta>0$ and apply \eqref{persis-henri} with $E_\eta$ in place of $E$. 
Note that Property \ref{propri-pos} and Lemma \ref{invaLem} imply that every solution is necessarily with values in $\mathrm{Int}(E_\eta)$ over $\R_+^*$. 
Hence, we deduce that there exists $\kappa>0$ such that 
$$
\liminf_{t\rightarrow +\infty} d((x(t),s(t)),\partial E_\eta)\geq \kappa,
$$
for every solution starting in $\mathcal{D}$.  
In view of the definition of $\partial E_\eta$, we can write
$\partial E_\eta=\bigcup_{i=1}^n F_i \cup \tilde F$ where $F_i:=\{(x,s)\in \R_+^n \times [0,1] \; ; \; x_i=0\}$ and $\tilde F$ is the complement of $\bigcup_{i=1}^n F_i$ in $E_\eta$. It follows that for every $1 \leq i \leq n$ and every initial condition in $\mathrm{Int}(E_\eta)$, one has
\begin{equation}{\label{ineg-final}}
\kappa \leq 
\liminf_{t\rightarrow +\infty} d((x(t),s(t)),\partial E_\eta)
\leq  \liminf_{t\rightarrow +\infty} d((x(t),s(t)),F_i)=\liminf_{t\rightarrow +\infty} x_i(t).
\end{equation}
Finally, for every initial condition $(x^0,s^0)\in \mathcal{D}$, there is a time $t'_0\geq 0$ such that for every time $t\geq t'_0$, the associated solution to \eqref{sys1} satisfies $(x(t),s(t))\in \mathrm{Int}(E_\eta)$. Combining this property with \eqref{ineg-final} then yields the desired property \eqref{ineg-final2} with $\gamma_{\eps,u}:=\kappa$. 
\end{proof}
%
\section{Global stability property of \eqref{sys1}}{\label{sec-4}}
\subsection{Asymptotic behavior of \eqref{sys1} with $u=0$}{\label{convergence0}}
We start by studying \eqref{sys1} in batch mode, {\it{i.e.}}, we take $u=0$. This will be useful to prove Theorem \ref{GAS-thm}. 
The dynamics of $x$ then becomes
\begin{equation}{\label{tmp1}}
\dot{x}=M(s(t))x+\eps Tx. 
\end{equation}
If $\eps=0$, the solution $(x(\cdot),s(\cdot))$ 
of \eqref{chem1} converges to some point $(x^\infty,0)\in \Delta$ such that $\sum_{j=1}^n x^\infty_j=b(0)$. So, we suppose in what follows that $\eps>0$. 
\begin{prop}{\label{dilutionratezero}}
If $u=0$ and $\eps>0$, every solution $(x(\cdot),s(\cdot))$ of \eqref{sys1} starting in $\mathcal{D}$ satisfies 
$$
\lim_{t\rightarrow +\infty}(x(t),s(t)) =\Big{(}\frac{b(0)}{n}a,0\Big{)}. 
$$
\end{prop}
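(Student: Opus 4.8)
The plan is to exploit two structural facts special to the batch regime $u=0$: the total mass is conserved, and the mutation operator $T$ forces spatial homogenization. With $u=0$ the relation $\dot b=u(1-b)$ degenerates to $\dot b=0$, so $b(t)\equiv b(0)$ and $\sum_{j=1}^n x_j(t)=b(0)-s(t)$. Since $\dot s=-\sum_{j=1}^n\mu_j(s)x_j\le 0$, the substrate $s(\cdot)$ is non-increasing; consequently $\sum_j x_j(t)\ge\sum_j x_j(0)>0$ for all $t$, and with $x(\cdot)\ge0$ the state $x(\cdot)$ stays bounded, say $\|x(t)\|\le\sqrt{n}\,b(0)=:C$. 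These uniform bounds are what make the subsequent estimates work.

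First I would show $s(t)\to0$. Being non-increasing and bounded below by $0$, $s(\cdot)$ converges to some $s^\infty\ge0$. If $s^\infty>0$, then $s(t)\ge s^\infty$ together with the monotonicity of each $\mu_j$ gives $\min_j\mu_j(s(t))\ge\min_j\mu_j(s^\infty)>0$, whence
\[
\dot s(t)\le-\Big(\min_{1\le j\le n}\mu_j(s^\infty)\Big)\sum_{j=1}^n x_j(0)<0\qquad\text{for all }t\ge0,
\]
and integrating contradicts $s(t)\ge0$. Hence $s^\infty=0$, so $\tfrac1n\sum_j x_j(t)=\tfrac1n(b(0)-s(t))\to b(0)/n$.

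The core step is to prove that $x(\cdot)$ becomes spatially uniform. Let $P:=I_n-\tfrac1n aa^\top$ be the orthogonal projection onto the complement of $\mathrm{span}(a)$ (recall $a^\top a=n$), and set $y:=Px=x-\bar x\,a$ with $\bar x:=\tfrac1n\sum_j x_j$. Using $Ta=0$ and the symmetry of $T$ one has $TP=PT=T$ and $Tx=Ty$, hence $\dot y=P M(s)x+\eps Ty$. Let $-\delta<0$ denote the largest eigenvalue of $T$ on $\mathrm{range}(P)$, which is negative because $0$ is a simple eigenvalue of the negative semidefinite matrix $T$ with eigenvector $a$ (Section \ref{sec-1}); thus $y^\top Ty\le-\delta\|y\|^2$. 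With $V:=\tfrac12\|y\|^2$, using $y^\top P=y^\top$, the bound $\|x(t)\|\le C$, and that the largest singular value of $M(s(t))$ equals $\hat\mu(s(t))=:\rho(t)$,
\[
\dot V=y^\top M(s)x+\eps y^\top Ty\le \rho(t)\,C\,\|y\|-\eps\delta\|y\|^2\le -\eps\delta V+g(t),\qquad g(t):=\tfrac{(\rho(t)C)^2}{2\eps\delta},
\]
the last inequality by Young's inequality. Since $s(t)\to0$ and $\hat\mu(0)=0$, we have $\rho(t)\to0$, so $g(t)\to0$; a standard comparison argument for $\dot V\le-\eps\delta V+g$ with vanishing forcing then yields $V(t)\to0$, i.e. $y(t)\to0$. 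Combined with $\bar x(t)\to b(0)/n$ this gives $x(t)\to\tfrac{b(0)}{n}a$, and together with $s(t)\to0$ the announced limit follows.

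The delicate point is the final step: $T$ is only negative \emph{semi}definite, so $\tfrac12\|x\|^2$ cannot serve as a Lyapunov function, and the coupling term $x^\top M(s)x$ is in fact nonnegative. The argument closes only after projecting out the conserved direction $a$ (on which $T$ supplies strict decay) and because $M(s(t))\to0$ turns the residual coupling into an asymptotically vanishing forcing handled by the comparison lemma. An alternative would be to view \eqref{tmp1} with $s(t)\to0$ as an asymptotically autonomous system whose limit equation is $\dot x=\eps Tx$ (with solutions converging to $\tfrac1n(a^\top x(0))\,a$) and to invoke \cite{Thieme92}, but the direct estimate above is more self-contained and quantitative.
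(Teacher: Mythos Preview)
Your proof is correct and takes a genuinely different route from the paper's. Both arguments first establish $s(t)\to 0$ by monotonicity of $s$ and a contradiction if the limit were positive (you avoid Barbalat's lemma by integrating the uniform negative bound on $\dot s$ directly, which is slightly cleaner). The substantive difference is in the homogenization step. The paper diagonalizes $T$ via a change of basis $P^{-1}TP=\mathrm{diag}(0,\alpha_2,\dots,\alpha_n)$ with $\alpha_i<0$, isolates the $(n-1)$-dimensional stable block $z$, and then invokes the theory of asymptotically autonomous systems \cite{Thieme92} to conclude $z(t)\to 0$, after which conservation of $b$ pins down the remaining coordinate. You instead work with the orthogonal projector $P=I_n-\tfrac1n aa^\top$, take $V=\tfrac12\|Px\|^2$ as a Lyapunov function, and use Young's inequality together with a Gronwall-type comparison to absorb the vanishing forcing term $PM(s(t))x$. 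Your approach is more self-contained (no black-box appeal to \cite{Thieme92}) and yields a quantitative rate of order $e^{-\eps\delta t}$ modulo the decay of $\hat\mu(s(t))$; the paper's approach is softer but fits the asymptotically-autonomous framework it reuses later in Section~\ref{sec-4}. Amusingly, the alternative you sketch in your final paragraph---viewing \eqref{tmp1} as an asymptotically autonomous perturbation of $\dot x=\eps Tx$ and invoking \cite{Thieme92}---is exactly what the paper does.
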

\begin{proof} 
Observe that the mapping $t\mapsto s(t)$ decreases over $\R_+$, and that $s\geq 0$. Thus, $s(\cdot)$ necessarily converges to some value $\bar s$. By Barbalat's Lemma, 
$\lim_{t\rightarrow +\infty} \dot{s}(t)$ exists and is zero. Suppose now by contradiction that  $\bar s>0$. 
It follows that $\sum_{j=1}^n x_j(t) \rightarrow b(0)-\bar s$ and that $b(0)-\bar s>0$ because $x(0)\in \mathcal{D}$. Hence, there is $t_0\geq 0$ 
such that for every $t\geq t_0$, one has
$$
\dot{s}(t)=-\sum_{j=1}^n \mu_j(s(t))x_j(t)\leq - \upsilon \sum_{j=1}^n x_j(t)\leq -\upsilon \frac{ b(0)-\bar s}{2},
$$
where $\upsilon:=\frac{1}{2}\min_{1 \leq j \leq n}(\mu_j(\bar s))$. We have thus obtained a contradiction with the fact that 
$\dot{s}(t)\rightarrow 0$ as $t\rightarrow +\infty$.  
Let us now come back to  \eqref{tmp1}
which is a non-autonomous perturbation of the linear system 
\begin{equation}{\label{tmp2}}
\dot{x}=\eps Tx. 
\end{equation}
In order to apply the theory of asymptotically autonomous system \cite{Thieme92}, we need to rewrite \eqref{tmp2} in the orthogonal of $\R a$ in such 
a way that the corresponding autonomous dynamics possesses a unique globally asymptotically stable equilibrium (this is not the case with \eqref{tmp2} since zero is an eigenvalue of $T$).  
Doing so, we know that there exists an invertible matrix $P\in \R^{n\times n}$ such that 
$P^{-1}TP=D$ where $D:=\mathrm{diag}(0,\alpha_2,...,\alpha_n)$ with $\alpha_i<0$ for $2 \leq i \leq n$.  In addition, without any loss of generality, we may assume that the first column of $P$ is exactly equal to the vector $a$ (that is collinear to the Perron vector of $T$), and we also set $\tilde D:=\mathrm{diag}(\alpha_2,...,\alpha_n)\in \R^{(n-1)\times (n-1)}$. Multiplying \eqref{tmp1} on the left by $P^{-1}$ then gives\footnote{Given $v\in \R^n$, the notation 
$v_{-1}$ indicates that $v=(v_1,v_{-1})$ with $v_{-1}\in \R^{n-1}$ ($v_{-1}$ is the vector obtained from $v$ by removing the first component).}
\begin{equation}{\label{tmp3}}
\left|\begin{array}{cl}
\dot{y}&=(P^{-1} M(s(t))x)_1,\\
\dot{z}&=(P^{-1} M(s(t))x)_{-1} + \eps \tilde D z,
\end{array}\right.
\end{equation}
where $x=P\left(\begin{array}{c}y\\z\end{array}\right)$.
Next, the ODE satisfied by $z$ can be rewritten
$$
\dot{z}=F(t,z)+\eps \tilde D z,
$$
where $F:\R_+ \times \R^{n-1}\rightarrow \R^{n-1}$ is defined by
$$F(t,z):=\left( P^{-1} M(s(t))P\left(\begin{array}{c}y(t)\\ z\end{array}\right)\right)_{-1}.$$ 
Since $s(t)\rightarrow 0$ when $t\rightarrow +\infty$ and $y(\cdot)$ is bounded, the preceding system is a non-autonomous perturbation of 
 the linear system
\begin{equation}{\label{tmp4}}
\dot{z}=\eps \tilde D z. 
\end{equation}
Now, one has 
$F(t,z)\rightarrow 0$ when $t\rightarrow +\infty$ uniformly locally w.r.t.~$z$ and observe that every solution to \eqref{tmp4} converges to zero. We deduce from \cite{Thieme92} that every solution $(y(\cdot),z(\cdot))$ 
to \eqref{tmp3} is such that $z(t)\rightarrow 0$ when $t\rightarrow +\infty$. Coming back to the original variable $x$, the solution $x(\cdot)$ can be written
$$
x(t)=y(t)a+o(1).
$$
To conclude, observe that $t\mapsto b(t)$ is constant. Thus, for every $t\geq 0$, 
$$
b(0)=b(t)=\sum_{j=1}^n x_j(t)+s(t)=n y(t)  + o(1).
$$
Hence, $y(t)\rightarrow b(0)/n$ as $t\rightarrow +\infty$, which ends the proof. 
\end{proof}
%
\begin{rem}
This proposition shows that if $\eps>0$ and $u=0$, then, every species concentration converges to the same value $\frac{b(0)}{n}$ as $t\rightarrow +\infty$. 
In that case, any solution to \eqref{sys1} converges to the point $\Big{(}\frac{b(0)}{n}a,0 \Big{)}$ that depends on the initial condition. 
\end{rem}
\subsection{Global stability for $\eps>0$ and $u$ small enough}
Let us first recall Corollary 2.3 of \cite{SW99} which is a fundamental result about global stability of a perturbed steady-state. 
Let $k \geq 1$, and $G$, $U$ two closed subsets of $\R^n$ and $\R^k$ respectively.  
Consider a continuous function $g:G\times U \rightarrow \R^n$, $(x,u)\mapsto g(x,u)$ where $u\in U$ is a parameter. Suppose that $D_x g(x,u)$ exists and is continuous over $G \times U$ and that solutions $x(\cdot,x_0,u)$ to the Cauchy Problem
\begin{equation}{\label{sys-gen}}
\left|
\begin{array}{cl}
\dot{x}&=g(x,u), \vspace{0.1cm}\\
x(0)&=x^0,
\end{array}
\right.
\end{equation}
are unique and remain in $G$ for every time $t\geq 0$ and every $(x^0,u)\in G \times U$. 
\begin{thm}[\cite{SW99}]{\label{thm-SW}}
Let  $(x^\star,u^\star)\in G \times U$ be such that $x^\star\in \mathrm{Int}(G)$ and $g(x^\star,u^\star)=0$. Suppose that the matrix $D_x g(x^\star,u^\star)$ is Hurwitz and that $x^\star$ is globally attracting for solutions to \eqref{sys-gen} with $u=u^\star$. If there is a non-empty compact set $K\subset G$ such that for each $(x^0,u)\in G \times U$, 
$x(t,x^0,u)\in K$ for $t$ large enough, then, there are $r>0$ and a unique point $x^\star(u)\in G$ for every $u\in B(u^\star,r)$ such that 
$g(x^\star(u),u)=0$ and: 
\begin{equation}{\label{GAS-perturb}}
\forall u \in B(u^\star,r), \; \forall x^0\in G, \; x(t,x^0,u)\xrightarrow[t\rightarrow +\infty]{} x^\star(u). 
\end{equation}
\end{thm}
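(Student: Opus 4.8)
The plan is to decouple the statement into a \emph{local} part, handled by the implicit function theorem, and a \emph{global} part, handled by a perturbation-of-attractor argument that crucially exploits the dissipativity hypothesis (the common compact set $K$).

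First I would use that $D_x g(x^\star,u^\star)$ is Hurwitz, hence invertible, to apply the implicit function theorem to $g$ at $(x^\star,u^\star)$. This produces $r_1>0$ and a $C^1$ branch $u\mapsto x^\star(u)$ on $B(u^\star,r_1)$ with $g(x^\star(u),u)=0$ and $x^\star(u^\star)=x^\star$; since $x^\star\in \mathrm{Int}(G)$, the branch stays in $\mathrm{Int}(G)$ after shrinking $r_1$. By continuity of the spectrum, $D_x g(x^\star(u),u)$ remains Hurwitz for $u$ close to $u^\star$, so each $x^\star(u)$ is locally asymptotically stable. The key object I would extract here, through a Lyapunov function for the linearization depending continuously on $u$, is a \emph{uniform} local basin: there are $\rho>0$ and $r_2\in(0,r_1]$ such that for every $u\in B(u^\star,r_2)$, any trajectory entering $B(x^\star,\rho)$ converges to $x^\star(u)$.

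Next, for the global conclusion I would argue by contradiction. Suppose \eqref{GAS-perturb} fails; then there are $u_n\to u^\star$ and initial data $x^0_n\in G$ whose trajectories never eventually enter $B(x^\star,\rho)$. Here the dissipativity hypothesis is what makes the argument work: each omega-limit set $\omega_n$ for the $u_n$-flow is nonempty, compact, invariant, contained in $K$, and, by the uniform basin from the previous step, disjoint from $B(x^\star,\rho)$. Picking $p_n\in\omega_n$ and passing to a subsequence, $p_n\to p^\star\in K$ with $d(p^\star,x^\star)\geq\rho$. Because $p_n$ lies on a \emph{full} bounded orbit of the $u_n$-system contained in $K\setminus B(x^\star,\rho)$, continuous dependence of the flow on both state and parameter (uniform on compact time intervals), together with compactness of $K$ and a diagonal extraction over $[-T,T]$, yields an entire orbit of the unperturbed ($u=u^\star$) system through $p^\star$ that stays in $K\setminus B(x^\star,\rho)$ for all $t\in\R$. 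This contradicts the hypothesis that $x^\star$ is globally attracting for the $u^\star$-system, since such an orbit would be forced to converge to $x^\star$.

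I expect the global step to be the main obstacle: the delicate point is manufacturing a bounded \emph{entire} orbit of the limiting system, bounded away from $x^\star$, out of the perturbed omega-limit sets. This is exactly where the common compact $K$ is indispensable, both to guarantee the $\omega_n$ are nonempty and precompact and to carry out the backward-time diagonal limit; without it the limiting orbit could escape to infinity. An equivalent and cleaner packaging of the same idea is to invoke upper semicontinuity of the global attractors $A_u$ in the parameter (the unperturbed attractor being the singleton $\{x^\star\}$ by global attractivity plus dissipativity): upper semicontinuity gives $A_u\subset B(x^\star,\rho)$ for $u$ near $u^\star$, and the uniform local basin then forces $A_u=\{x^\star(u)\}$, whence global convergence to $x^\star(u)$.
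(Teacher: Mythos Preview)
The paper does not give its own proof of this theorem: it is simply \emph{recalled} from \cite{SW99} (Corollary~2.3 there) as an external tool, and is then applied in the proof of Theorem~\ref{GAS-thm}. So there is nothing in the paper to compare your argument against.

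That said, your outline is sound and is essentially the strategy of the original Smith--Waltman proof: an implicit-function/continuity-of-spectrum step to produce the branch $u\mapsto x^\star(u)$ together with a \emph{uniform} local basin, followed by a compactness/contradiction step in which the dissipativity into the common compact $K$ is used to extract a bounded entire orbit of the $u^\star$-system avoiding $x^\star$, contradicting global attractivity. Your alternative packaging via upper semicontinuity of the global attractor in the parameter is also a standard and clean way to phrase the same mechanism. The only point worth tightening is the uniform basin: you want $\rho>0$ and $r_2>0$ such that $\overline{B(x^\star,\rho)}$ is \emph{positively invariant} for every $u\in B(u^\star,r_2)$ (not merely that trajectories entering it converge), so that in the contradiction step the perturbed $\omega$-limit sets are genuinely disjoint from $B(x^\star,\rho)$; this follows from the converse Lyapunov construction you allude to, but should be stated explicitly.
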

\begin{rem}
This result also applies if $x^\star$ is on the boundary of $G$ provided that the dynamics $g$ can be extended to a $C^1$ mapping  
in some convex neighborhood of $x^\star$ (see {\rm{\cite[Corollary 2.3]{SW99}}}). 
\end{rem}
The next lemma is based on Proposition \ref{deleen-prop}  (see \cite{deleen}) and it will be useful to prove Theorem \ref{GAS-thm}. 

\begin{lem}{\label{lem-robust}} 
For every $(\eps,u_1)\in \R_+^* \times (0,u_c(\eps))$, there is $\beta_\eps>0$ 
such that for every $u \in [0,u_1]$ and every initial condition in $\mathcal{D}$, the unique solution of \eqref{sys1} 
associated with this initial condition satisfies 
$$
\liminf_{t\rightarrow +\infty} \sum_{j=1}^n x_j(t) \geq \beta_\eps.
$$
\end{lem}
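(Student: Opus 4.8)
The plan is to establish a lower bound on the total biomass that is \emph{uniform} in the dilution rate $u$ over the compact interval $[0,u_1]$, strengthening Theorem \ref{deleen-prop} (where the constant $\beta_{\eps,u}$ a priori degenerates as $u\to u_c(\eps)$ or $u\to 0$). First I would recall that for each fixed $u\in(0,u_c(\eps))$, Theorem \ref{deleen-prop} already provides
$$
\liminf_{t\rightarrow +\infty}\sum_{j=1}^n x_j(t)\geq \beta_{\eps,u}=c\,\frac{\min_{1\leq j\leq n}v_j^{\eps,u}}{\max_{1\leq j\leq n}v_j^{\eps,u}},
$$
where $v^{\eps,u}$ is the Perron vector of $B(1,u,\eps)$. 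The natural strategy is therefore to show that $u\mapsto \beta_{\eps,u}$ stays bounded away from zero on $[0,u_1]$, and to handle the endpoint $u=0$ separately.

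Next I would treat the two regimes. For $u\in(0,u_1]$, since $\lambda(\cdot)$ is analytic and its Perron vector depends continuously (indeed analytically) on the entries of a symmetric quasi-positive matrix (recall the differentiability discussion around \eqref{deriv-lambda}), the map $u\mapsto v^{\eps,u}$ is continuous on the compact interval $[0,u_1]$; note that $B(1,u,\eps)=M(1)-uI_n+\eps T$ depends continuously (affinely) on $u$, and for $\eps>0$ it is irreducible, so the Perron vector has strictly positive entries throughout. Consequently $u\mapsto \min_j v_j^{\eps,u}/\max_j v_j^{\eps,u}$ is continuous and strictly positive on the compact set $[0,u_1]$, hence attains a positive minimum. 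Setting $\beta_\eps$ to be $c$ times this minimum (possibly shrinking $c$ to absorb constants) gives the desired uniform bound for every $u\in(0,u_1]$. The point is that enlarging the interval only from the right endpoint $u_1<u_c(\eps)$ keeps us safely inside the coexistence regime, so no degeneracy occurs.

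The endpoint $u=0$ must be argued by hand, since the Perron root of $B(1,0,\eps)=M(1)+\eps T$ equals $u_c(\eps)>0$, not zero, and the biomass no longer equilibrates at a coexistence steady-state but instead $s\to 0$ with the total biomass tending to $b(0)$ (Proposition \ref{dilutionratezero}). However, for the claimed liminf we only need a lower bound, and by Lemma \ref{invaLem} one has $b(t)\to 1$ for $u>0$ while for $u=0$ the quantity $b$ is conserved. In the batch case $u=0$, Proposition \ref{dilutionratezero} gives $\sum_j x_j(t)\to b(0)$; since for initial data in $\mathcal{D}$ on or attracted to $\Delta$ the limiting biomass is $1$, one can take $\beta_\eps$ no larger than a fixed fraction, say $1/2$, to cover this case as well. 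I would therefore define $\beta_\eps$ as the minimum of the two bounds obtained on $(0,u_1]$ and at $u=0$, which remains strictly positive.

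The main obstacle will be verifying the \emph{uniform-in-$u$} continuity of the Perron data and, more delicately, confirming that the constant $c$ from Theorem \ref{deleen-prop} can be chosen independently of $u$. If $c$ in \eqref{borneinfdeleen} secretly depends on $u$ (through the attractivity estimates of $\Delta$ or the repelling bound \eqref{repulsif}, whose $c_u$ does depend on $u$), then I would need to re-examine the proof of Theorem \ref{deleen-prop} and extract a bound valid uniformly on $[0,u_1]$, using that $[0,u_1]$ is compact and that all the ingredients (the attractivity rate $e^{-tu}$, the threshold $\delta_u$) vary continuously and stay controlled away from the dangerous limit $u\to u_c(\eps)$. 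This uniformity, rather than any single estimate, is the crux; once it is secured, the compactness argument closes the proof.
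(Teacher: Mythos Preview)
Your approach is essentially the same as the paper's: both rest on the continuity of the Perron vector $u\mapsto v^{\eps,u}$ over the compact interval $[0,u_1]$ and then take the infimum of the bound from Theorem~\ref{deleen-prop}. Two remarks will tighten your argument and bring it in line with the paper.

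First, your main worry is unfounded: re-read the quantifiers in Theorem~\ref{deleen-prop}. The constant $c>0$ is chosen \emph{before} the quantification over $(\eps,u)$, so it is universal and does not ``secretly depend on $u$''. Hence there is no need to revisit the proof in \cite{deleen} to extract uniformity in $u$; it is already there. Moreover, since $\|v^{\eps,u}\|=1$ one has $\max_j v_j^{\eps,u}\leq 1$, so $\beta_{\eps,u}\geq c\,\min_j v_j^{\eps,u}$, and the paper simply sets $\beta_\eps:=c\,\min_{u\in[0,u_1]}\min_j v_j^{\eps,u}$, which is positive by continuity and compactness.

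Second, the paper does not treat $u=0$ separately; it applies the compactness argument directly on $[0,u_1]$. Your caution here is understandable, and in fact your observation that for $u=0$ the quantity $b$ is conserved (so $\liminf\sum_j x_j=b(0)$, which can be arbitrarily small for general initial data in $\mathcal{D}$) points to a genuine subtlety that the paper glosses over. However, the only place the lemma is used (the proof of Theorem~\ref{GAS-thm}) restricts to initial conditions on $\Delta$, where $b(0)=1$, so the issue is harmless for the application. Your instinct to flag it is correct; just note that the separate treatment via Proposition~\ref{dilutionratezero} you propose only yields a uniform bound on $\Delta$, not on all of $\mathcal{D}$.
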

\begin{proof} 
Let $(\eps,u_1)\in \R_+^* \times (0,u_c(\eps))$. Given $u \in[0,u_1]$, the Perron vector $v^{\eps,u}$ associated with the greatest eigenvalue of the matrix $B(1,u,\eps)$ is the unique solution to the system 
$$
\left\{
\begin{array}{rl}
B(1,u,\eps) w-\lambda(B(1,u,\eps))w&=0,\\
\|w\|-1&=0.
\end{array}
\right.
$$
Now, consider the $C^1$-mapping 
$$
\tilde \psi:(u,w)\in[0,u_1] \times \R^n \mapsto \tilde \psi(u,w):=\Big{(} B(1,u,\eps) w-\lambda(B(1,u,\eps))w,\frac{\|w\|^2-1}{2}\Big{)}\in \R^{n+1}.
$$
Its partial derivative w.r.t.~$w$ at the point $(u,v^{\eps,u})\in [0,u_1]\times \R^n$ is given by: 
$$
\frac{\partial \tilde \psi}{\partial w}(u,v^{\eps,u})=\Big{(}B(1,u,\eps)-\lambda(B(1,u,\eps)) I_n,v^{\eps,u} \Big{)}.
$$
Hence,  if $w$ is in the kernel of $\frac{\partial \tilde \psi}{\partial w}(u,v^{\eps,u})$, it satisfies
$B(1,u,\eps)w=\lambda(B(1,u,\eps))w$ and $v^{\eps,u}\cdot w=0$ (here, $\cdot$ is the scalar product in $\R^n$). The first equality implies that there is 
$\nu \in \R$ such that $w=\nu v^{\eps,u}$. Using the second equality, we find that $\nu=0$, thus $w=0$ and $\frac{\partial \tilde \psi}{\partial w}(u,v^{\eps,u})$  
is invertible. Thanks to the implicit function theorem, we obtain that way that $u \mapsto v^{\eps,u}$ is locally continuous around every $u\in [0,u_1]$, thus it is continuous over $[0,u_1]$.
Now, Proposition \ref{deleen-prop} of \cite{deleen} implies that
$$
\liminf_{t \rightarrow +\infty} \sum_{j=1}^n x_j(t)\geq c  \min_{1 \leq j \leq n} v_j^{u,\eps}.
$$
Since the mapping $u\mapsto v^{\eps,u}$ is continuous over $[0,u_1]$, so is $u \mapsto \min_{1 \leq j \leq n} v_j^{u,\eps}$, hence,
$$
\liminf_{t \rightarrow +\infty} \sum_{j=1}^n x_j(t)\geq \beta_\eps:=c  \min_{u \in [0,u_1]}\min_{1 \leq j \leq n} v^{u,\eps}_j. 
$$
Because $u \mapsto \min_{1 \leq j \leq n} v_j^{u,\eps}$ is positive and continuous over $[0,u_1 ]$, we get that $\beta_\eps>0$ which ends the proof. 
\end{proof}
We now give our main result about  the global stability of the steady-state $E_{\eps,u}$ when $\eps>0$ is fixed and $u$ is with small enough values. 
\begin{thm}{\label{GAS-thm}}
For every $\eps>0$, there is $u_{s}(\eps)\in (0,u_c(\eps)]$ such that for every $u\in (0,u_{s}(\eps))$, the steady-state $E_{\eps,u}$ is globally asymptotically stable. 
\end{thm}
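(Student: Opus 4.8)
The plan is to work on the invariant attractive manifold $\Delta$ and to regard $E_{\eps,u}$ as a regular perturbation, \emph{in the dilution rate $u$}, of the batch equilibrium obtained for $u=0$ in Proposition \ref{dilutionratezero}; the perturbation is controlled by Theorem \ref{thm-SW}, and the passage from $\Delta$ back to the full system \eqref{sys1} is carried out by the theory of asymptotically autonomous systems, exactly as in the case $n=2$. Fixing $\eps>0$, I reduce \eqref{sys1} on $\Delta$ to the autonomous system on the compact set $\mathcal{D}'$ (recall \eqref{setF})
\[
\dot{x}=g(x,u):=B\Big(1-\sum_{j=1}^n x_j,u,\eps\Big)x,
\]
where $u$ now plays the role of the parameter and $u^\star=0$ is the base value. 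Restricting to $\Delta$ amounts to taking $b(0)=1$ in Proposition \ref{dilutionratezero}, so $x^\star:=\tfrac{1}{n}a$ is globally attracting for $g(\cdot,0)$ on $\mathcal{D}'\setminus\{0\}$, and $g(x^\star,0)=\eps T x^\star=0$. The reduction to $\Delta$ is essential: on the full system the equation $\dot{b}=u(1-b)$ degenerates at $u=0$ (the $b$-direction carries a zero eigenvalue and a whole line of equilibria), which would preclude any direct use of Theorem \ref{thm-SW}.

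Next I check the two spectral hypotheses of Theorem \ref{thm-SW} at $(x^\star,0)$. The point $x^\star$ lies on the face $\{\sum_j x_j=1\}$ of $\mathcal{D}'$ (it corresponds to $s=0$), but the Monod kinetics extend smoothly across $s=0$, so $g$ extends to a $C^1$ map in a neighbourhood of $x^\star$ and the boundary version of Theorem \ref{thm-SW} (the remark following it) applies. A direct computation of the Jacobian gives
\[
D_xg(x^\star,0)=\eps T-\tfrac{1}{n}\,d\,a^\top,\qquad d:=(\mu_1'(0),\dots,\mu_n'(0))^\top,
\]
a rank-one perturbation of $\eps T$ built from the two positive vectors $\tfrac1n d$ and $a$. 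Since $\eps T$ is symmetric, quasi-positive and irreducible with Perron root $0$, the matrix $-\eps T$ is a singular $M$-matrix whose eigenvalue $0$ is geometrically simple; this is precisely the configuration already treated in the proof of Proposition \ref{disjonction}, so Theorem 2.7(v) of \cite{Bierkens} applies verbatim and shows that $-D_xg(x^\star,0)$ is strictly positive stable, i.e. $D_xg(x^\star,0)$ is Hurwitz.

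It remains to supply the compact absorbing set demanded by Theorem \ref{thm-SW}, and this is where the robust persistence of Lemma \ref{lem-robust} is used. Fixing $u_1\in(0,u_c(\eps))$, Lemma \ref{lem-robust} yields $\beta_\eps>0$ with $\liminf_{t\to+\infty}\sum_j x_j(t)\ge\beta_\eps$ \emph{uniformly} for $u\in[0,u_1]$ and every initial condition in $\mathcal{D}$. Hence the compact set $\{x\in\mathcal{D}'\,;\,\sum_j x_j\ge\beta_\eps\}$ confines, for all such $u$, the forward dynamics issued from $\mathcal{D}$ and keeps it uniformly away from the washout $x=0$; choosing $G$ to be a forward-invariant neighbourhood of the resulting global attractor (which persistence locates in this set) makes the absorbing-set hypothesis of Theorem \ref{thm-SW} legitimate while avoiding the unstable washout. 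Theorem \ref{thm-SW} then provides $r>0$ such that for every $u\in[0,r)$ the reduced system has a unique equilibrium that is globally attracting; by uniqueness and Proposition \ref{disjonction} it equals $x^{\eps,u}$, whose local stability is also granted by Proposition \ref{disjonction}, so $x^{\eps,u}$ is globally asymptotically stable for the reduced system on $\Delta$. I set $u_s(\eps):=\min(r,u_c(\eps))$.

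Finally, for an arbitrary solution $(x(\cdot),s(\cdot))$ of \eqref{sys1} starting in $\mathcal{D}$, Lemma \ref{invaLem} gives $b(t)=s(t)+\sum_j x_j(t)\to 1$, so the $x$-subsystem $\dot{x}=B(b(t)-\sum_j x_j,u,\eps)x$ is an asymptotically autonomous perturbation of the reduced system, whose limiting equilibrium $x^{\eps,u}$ has just been shown to be globally asymptotically stable. The theory of \cite{Thieme92} then yields $x(t)\to x^{\eps,u}$, hence $s(t)=b(t)-\sum_j x_j(t)\to s^{\eps,u}$ and $(x(t),s(t))\to E_{\eps,u}$ for $u\in(0,u_s(\eps))$. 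I expect the third step to be the main obstacle: both the batch equilibrium $x^\star$ and the washout lie on $\partial\mathcal{D}'$, so Theorem \ref{thm-SW} cannot be invoked naively with $G=\mathcal{D}'$, and the delicate point is to reconcile the boundary location of $x^\star$ (handled by the $C^1$-extension) with a compact absorbing set that excludes the washout uniformly in $u$ (handled by Lemma \ref{lem-robust}).
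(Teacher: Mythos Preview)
Your proof follows the paper's own argument essentially step by step: reduce to $\Delta$, take $u$ as the perturbation parameter with base point $(x^\star,u^\star)=(a/n,0)$, verify the Hurwitz condition on $D_xg(x^\star,0)=\eps T-\tfrac{1}{n}d\,a^\top$ via the rank-one perturbation argument from Proposition \ref{disjonction}, supply the compact absorbing set through Lemma \ref{lem-robust}, apply Theorem \ref{thm-SW}, and lift to the full system via \cite{Thieme92}. The only cosmetic difference is that the paper, after obtaining some $u_0>0$, iterates the argument from the new base point $x^{\eps,u_0}$ and sets $u_s(\eps)$ to be the supremum of values for which GAS holds, whereas you take $u_s(\eps)=\min(r,u_c(\eps))$; both choices suffice for the theorem as stated.
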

\begin{proof}
First, Proposition \ref{prop-ineg} implies that for every $u>0$ such that  $u<u_c(\eps)$, then one has $\lambda(B(1,u,\eps))>0$. Therefore, for every $u \in (0,u_c(\eps))$, the point $E_{\eps,u}$ is the unique locally asymptotically stable point of \eqref{sys1} in $\mathcal{D}$. We start by proving the result for those initial conditions that are in the set $\Delta$. Fix $u_1\in (0,u_c(\eps))$. The dynamics of $x$ can be then written $\dot{x}=g(x,u)$ where
$g:G\times U \rightarrow \R^n$ is defined as
$$
g(x,u):=B\Big{(}1-\sum_{j=1}^n x_j,u,\eps\Big{)}x,
$$
with $G:=\mathcal{D}'$ (recall \eqref{setF}) and $U:=[0,u_1]$.  
We set $x^\star:=\frac{a}{n}$, $u^\star=0$. We are then in a position to verify the hypotheses of Theorem \ref{thm-SW}: 
\begin{itemize}
\item[$\bullet$] At $(x^\star,u^\star)$, one has $g(x^\star,u^\star)=0$ and $x^\star\in \partial G$ since $\sum_{j=1}^n x^\star_j=1$ ; 
\item[$\bullet$] The Jacobian matrix of $g$ w.r.t.~$x$ at $(x^\star,u^\star)$ writes
$$
D_x g(x^\star,u^\star)=B\Big{(}1-\sum_{j=1}^nx_j^\star,u^\star,\eps\Big{)}-d_{0}\,  a^\top =\eps T-d_{0}\, a^\top,
$$
where $d_{0}:=M'(0)x^\star$ 
(recall the proof of Proposition \ref{disjonction} (ii)). It is a rank one perturbation of $\eps T$. By using a similar argumentation as in the proof of Proposition \ref{disjonction} (ii), we can check that it is Hurwitz ;  
\item[$\bullet$] By extending $\mu_i$ as a $C^1$ function over $\R$, the dynamics $g$ can be extended to a $C^1$ function in 
$(G\cap B(x^\star,\delta))\times U$ for every $\delta>0$ ; 
\item The set $K:=\{x\in G \; ; \; \beta_\eps \leq \sum_{j=1}^n x_j\leq 1\}$ is compact, and according to Lemma \ref{lem-robust}, 
for every $u\in [0,u_1]$ and for $t$ large enough, one has $x(t)\in K$ for every solution to $\dot{x}=g(x,u)$. 
\end{itemize}
We can then apply Theorem \ref{thm-SW} which implies the existence of $u_0\in (0,u_1]$ such that for every $u\in [0,u_0]$, 
the point $x^{\eps,u}$ is GAS for the dynamics restricted to $\Delta$. 

We now consider initial conditions in $\mathcal{D}$ and let $u\in (0,u_0]$ be fixed. The first $n$ equations in \eqref{sys1} write
$$
\dot{x}=B\Big{(}b(t)-\sum_{j=1}^n x_j,u,\eps\Big{)}x.
$$
This system is a non-autonomous perturbation of the autonomous system $\dot{x}=g(x,u)$ since $b(t)\rightarrow 1$ when $t\rightarrow +\infty$. 
Using a similar argumentation as in the proof of Proposition \ref{dilutionratezero} (see \cite{Thieme92}), we deduce that for every initial condition $x^0\in [0,+\infty)^n\backslash \{0\}$, one has $x(t)\rightarrow x^{\eps,u}$ as $t\rightarrow +\infty$. Now, given some initial condition  $(x^0,s^0)\in \mathcal{D}$ for system \eqref{sys1}, one has 
$b(t)\rightarrow 1$ as $t\rightarrow +\infty$. So, one has $s(t)\rightarrow s^{\eps,u}$ as $t\rightarrow +\infty$ 
which shows that for every $u\in [0,u_0]$, then $E_{\eps,u}$ is GAS. 

We now argue that the same reasoning can be employed starting from the point $x^{\eps,u_0}$ (which is GAS) in place of $x^\star$. We  obtain that way the existence of $u'_0>u_0$ such that $E_{\eps,u}$ is GAS for every $u \in [u_0,u'_0]$. Repeating this argumentation, one can define 
$$
u_s(\eps):=\sup \{u_0 \in (0,u_c(\eps)) \; ; \; \forall  u \in [0,u_0],\; E_{\eps,u} \; \mathrm{is} \; \mathrm{GAS} \}.
$$
This concludes the proof. 
\end{proof}

Showing that $E_{\eps,u}$ is GAS for every $(\eps,u)\in \R_+^* \times (0,u_c(\eps))$ seems a difficult question that could deserve further investigations based on results of Section \ref{sec-persist} (Theorem \ref{thm-main-persist}). 
We can make the following observations:
\begin{itemize}
\item[$\bullet$] If $u_s(\eps)=u_c(\eps)$, then we have the desired property. 
But, at this step, 
we only know that $u_s(\eps) \leq u_c(\eps)$. 
If $u_s(\eps)<u_c(\eps)$, note that $E_{\eps,u}$ remains LAS for every $u\in [u_s(\eps),u_c(\eps))$, {\it{i.e.}}, no bifurcation occurs at $u=u_s(\eps)$.  So, one can wonder if in this setting, such a loss of global stability is possible or not. 
\item[$\bullet$] Another approach consists in showing that $E_{\eps,u}$ is GAS for every $u>0$ provided that $\eps>0$ is small enough using a similar result as in Lemma \ref{lem-robust}, and proceeding as in the proof of Theorem \ref{GAS-thm}. One should  prove 
that for every  $u\in (0,\bar \mu(1))$, there is a constant $\gamma'_u>0$ (that does not depend on $\eps$) such that
$$
\liminf_{t\rightarrow +\infty} x_{i_0}(t)\geq \gamma'_u,
$$
for every $\eps$ small enough and every solution of \eqref{sys1},  
where species $i_0$ wins the competition in absence of mutation. 
\end{itemize}
In the next table, we summarize  asymptotic properties about \eqref{sys1} that have been established in this paper 
(including also the case without mutation, under the hypotheses of Theorem 
\ref{CEP-thm}\footnote{As in Theorem \ref{CEP-thm}, we do not mention here the (non-generic) cases where the dilution rate $u$ would be such that $u=\mu_i(s)=\mu_j(s)$ for some indexes $i\not=j$ and $s\in (0,1)$. 
}). 
\begin{table}[!ht]
\begin{center}
\begin{tabular}{|c|c|c|}
\hline
\multicolumn{1}{|c|}{\backslashbox{$u$}{\vrule width 0pt height 1.25em$\eps$}}& $\eps=0$ & $\eps>0$
\\
\hline
$u=0$ & Convergence into $\hat \Delta$ & Convergence to $\Big{(}\frac{b(0)}{n}a,0\Big{)}$
\\
\hline
$0<u<u_{s}(\eps)$ & $E_{i_0}$ GAS in $\mathcal{D}_{i_0}$ &  $E_{\eps,u}$ GAS in $\mathcal{D}$
\\
\hline 
$u_{s}(\eps)\leq u<u_c(\eps)$ & $E_{i_0}$  GAS in $\mathcal{D}_{i_0}$ & $E_{\eps,u}$ LAS
\\
\hline  
$u=u_c(\eps)$ & $E_{wo}$ S in $\R_+^n \times [0,1]$ &  $E_{wo}$ S in $\R_+^n \times [0,1]$
\\
\hline
$u>u_c(\eps)$ & $E_{wo}$ GAS in $\R_+^n \times [0,1]$ & $E_{wo}$ GAS in $\R_+^n \times [0,1]$
\\
\hline
\end{tabular}
\end{center}\caption[Table]{Summary of asymptotic properties of \eqref{sys1}. Here, 
$\hat \Delta:=\{(x,s)\in \Delta \; ; \; s=0\}$, 
$\mathcal{D}_{i_0}:=\mathcal{E}_{i_0}\times [0,1]$,  
$\mathcal{E}_{i_0}:=\{x\in \R_+^n \; ; \; x_{i_0}\not=0 \}$ and 
$\mathcal{D}=(\R_+^n \backslash \{0\}) \times [0,1]$. The abbreviations $S$, LAS, and GAS stand respectively for stable, locally asymptotically stable, globally asymptotically stable.}
\label{table-resume}
\end{table}

\section{Conclusion and perspectives}
In this paper, we could extend some results of \cite{deleen} showing that the coexistence steady-state of \eqref{sys1} is always LAS and in particular GAS 
provided that the dilution rate is small enough (assuming only that kinetics are of Monod type). 
Let us emphasize that in contrast with the chemostat system, mutation implies coexistence, {\it{i.e.}}, each species is present asymptotically.  
Future works could investigate global stability via a Lyapunov approach  at least for $\eps>0$ small enough taking into account the knowledge of a Lyapunov function for $\eps=0$. 
Asymptotic stability properties could be also addressed with more complicated mutation terms such as in \cite{Arkin,lobry}. 
As well, most properties proved in this paper are still valid if the kinetics are only increasing, hence, one can wonder if such properties remain valid with more sophisticated growth functions such as Haldane's kinetics.  
Finally, it could be also interesting to study continuous models describing the growth of a population structured by a phenotypical trait living in a limited substrate environment (see \cite{Perthame}). 
\section*{Acknowledgment}
This research benefited from the support of Avignon Universit\'e (AAP Agro\&Sciences) and from the support of the FMJH Program
PGMO and from the support to this program from EDF-THALES-ORANGE. 
The authors would also like to thank Francis Mairet, Pedro Gajardo, and Fr\'ed\'eric Mazenc for helpful discussions about Lyapunov functions. The authors are  grateful to P. De Leenheer and A. Rapaport for fruitful exchanges.


\begin{thebibliography}{10}

\bibitem{Arkin}{\sc S.S. Arkin}, {\em Microbial evolution in the chemostat}, PhD Thesis, 2010, \url{http://hdl.handle.net/10044/1/11305} 

\bibitem{Atha82}{\sc Z.S. Athanassov}, {\em Perturbation Theorems for Nonlinear Systems of Ordinary Differential Equations}, 
J. Math. Anal. Appl., vol. 86, pp. 194--207, 1982. 


\bibitem{Bastin90}{\sc G. Bastin, D. Dochain}, {\em On-line estimation and adaptive control of bioreactors}, Elsevier, New York, 1990.

\bibitem{BMMS} {\sc T. Bayen, F. Mairet, P. martinon, M. Sebbah}, {\em Analysis of a periodic optimal control problem connected to microalgae anaerobic digestion}, 
Optimal Control Appl. Methods, vol. 36, 6, pp. 750--773, 2015. 

\bibitem{BM}{\sc T. Bayen, F. Mairet}, {\em Optimization of the separation of two species in a chemostat}, 
Automatica J. IFAC, Vol. 50, 4, pp. 1243--1248, 2014. 

\bibitem{BM2}{\sc T. Bayen, F. Mairet}, {\em Optimization of strain selection in evolution experiments in chemostat}, 
Internat. J. Control, vol. 90, 12 , pp. 2748--2759, 2017. 

\bibitem{BHM} {\sc T. Bayen, J. Harmand, M. Sebbah}, {\em Time-optimal control of concentration changes in the chemostat with one single species},  Appl. Math. Model., vol. 50, pp. 257--278, 2017. 


\bibitem{Berman} {\sc A. Berman, R. Plemmons}, {\em Nonnegative Matrices in the Mathematical Sciences}, SIAM, Philadelphia, PA, 1994. 

\bibitem{Bierkens} {\sc J. Bierkens, A. Ran}, {\em A singular M-matrix perturbed by a nonnegative rank one matrix has positive principal minors;
is it D-stable?}, Linear Algebra Appl, vol. 457, pp. 191--208, 2014. 

\bibitem{Buttler}{\sc G. Buttler, P. Waltman}, {\em Persistence in Dynamical Systems}, J. Differential Equations,  63, pp. 255--263, 1986. 

\bibitem{coville1} {\sc J. Coville}, {\em Convergence to equilibrium for positive solutions of some mutation-selection model}, preprint (2013). Available at arXiv:1308.6471.

\bibitem{coville2} {\sc J. Coville, F. Fabre}, {\em Convergence to the equilibrium in a Lotka-Volterra ODE competition system with mutations}, preprint (2013). Available at arXiv:1301.6237.

\bibitem{deleen} {\sc P. De Leenheer, J. Dockery, T. Gedeon, S. Pilyugin}, {\em The chemostat with lateral gene transfer}, J. Biol. Dyn., vol. 4, 6, pp. 607--620, 2010.


\bibitem{deleen2} {\sc P. De Leenheer, S.S. Pilyugin}, {\em Multistrain virus dynamics with mutations: A global analysis},  
Math. Med. Biol., vol. 25, 4, pp. 285--322, 2008. 

\bibitem{DN84}{\sc E. Deutsch, M. Neumann}, {\em Derivatives of the Perron Root at an Essentially Nonnegative Matrix and
the Group Inverse of an AA-Matrix},  J. Math. Anal. Appl., 102, pp. 1--29, 1984. 


\bibitem{diekmann04}{ \sc O.~Diekmann}, {\em A beginners guide to adaptive dynamics}, Banach Center Publ., vol. 63, pp. 47--86, 2004.  

\bibitem{diekmann05}{\sc O.~Diekmann, P.-E. Jabin, S.~Mischler, B.~Perthame}, {\em The dynamics of adaptation: an illuminating example and a  Hamilton--Jacobi approach}, Theoretical population biology, vol. 67, pp. 257--271, 2005.

\bibitem{dochain1} {\sc D. Dochain, P. Vanrolleghem}, {\em Dynamical modelling and estimation in wastewater treatment processes}, 
IWA Publishing, vol. 4, London, 2001.

\bibitem{floc}{\sc R. Fekih-Salem, J. Harmand, C. Lobry, A. Rapaport, T. Sari.}, {\em Extensions of the chemostat model with floculation}. J. Math. Anal. Appl. 397, vol. 1, pp. 292--305, 2013.

\bibitem{Freedman} {\sc H. L Freedman, S. Ruan, M. Tang}, {\em Uniform Persistence and Flows Near a Closed Positively Invariant Set}, J. Dynam. Differential Equations, vol. 6, 4, 1994. 

\bibitem{Coralie}{\sc C. Fritsch, F. Campillo, O. Ovaskainen}, {\em A numerical approach to determine mutant invasion fitness and evolutionary singular strategies}, 
Theoretical Population Biology, vol. 115, pp. 89--99, 2017.

\bibitem{GMR2009} {\sc P. Gajardo, F. Mazenc, H. Ramirez}, {\em Competitive exclusion principle in a model of chemostat with delays}, 
Dyn. Contin. Discrete Impuls. Syst. Ser. A Math. Anal., vol. 16, pp. 253--272, 2009. 

\bibitem{GG66}{\sc A. Gaudy, E. Gaudy}, {\em Microbiology of waste waters}, Annu. Rev. Microbiol., vol. 20, pp. 319--36, 1966.


\bibitem{Harker87}{\sc P.T. Harker}, {\em Derivatives of the Perron Root of a Positive Reciprocal Matrix: With Application to the Analytic Hierarchy Process}, 
Appl. Math. Comput., 22, pp. 217--232, 1987. 

\bibitem{alain-livre} J. Harmand, C. Lobry, A. Rapaport, T. Sari, {\em The Chemostat: Mathematical Theory of Microorganism Cultures}, Wiley-ISTE, 2017.

\bibitem{Hirsch00}{\sc M. W. Hirsch, H. L. Smith, X.-Q. Zhao}, {\em Chain transitivity, attractivity and strong repellors for semidynamical systems},  J. Dynam. Differential Equations, vol. 13, 1, pp. 107--131, 2000. 

\bibitem{Hsu1} {\sc S.-B. Hsu}, {\em Limiting behavior for competing species}, SIAM J. Appl. Math., vol. 34, pp.760--763, 1978. 

\bibitem{Hsu2} {\sc S.B.-Hsu, P. Waltman}, {\em A survey of mathematical models of competition with an inhibitor},  Math. Biosci., vol. 187, pp.53--91, 2004. 

\bibitem{lobry}{\sc C. Lobry} {\em La comp\'etition dans le ch\'emostat}, Travaux En Cours 81 : Des Nombres et des Mondes, pp. 119--187, \'edition Herman, Paris, 2013.

\bibitem{Magal} {\sc P. Magal}, {\em Perturbation of a Globally Stable Steady State and Uniform Persistence}, J. Dynam. Differential Equations, vol. 21, pp., 1--20, 2009. 

\bibitem{masci08} {\sc P. Masci, O. Bernard, F. Grognard}, {\em Continuous selection of the fastest growing species in the chemostat}, 
IFAC Proceedings Volumes, vol. 41, 2, pp. 9707--9712, 2008. 

\bibitem{mirrahimi12} {\sc S. Mirrahimi, B. Perthame, J.Y. Wakano}, {\em Evolution of species trait through resource competition}, J. Math. Biol., vol. 64, 7, pp. 1189--1223, 2012.

\bibitem{Monod1} {\sc J. Monod}, {\em Recherches sur la Croissance des Cultures Bact\'eriennes}, Hermann, Paris 1942.

\bibitem{Monod2} {\sc J. Monod}, {\em La technique de culture continue th\'eorie et applications}, Ann. Inst. Pasteur, 79, pp. 390--410, 1950.

\bibitem{szilard} {\sc A. Novick, L. Szilard}, {\em Experiments with the chemostat on spontaneous mutations of bacteria}, PNAS 36: pp.708--719, 1950.

\bibitem{Perthame} {\sc B. Perthame} {\em Transport equations in biology}, Birkh\"auser Verlag, Berlin, 2007. 


\bibitem{RV} {\sc A. Rapaport, M. Veruete}, {\em A new proof of the competitive exclusion principle in the chemostat}, 
Discrete Contin. Dyn. Syst. Ser. B, vol. 24, pp. 3755--3764, 2019. 

\bibitem{Salceanu}{\sc P.L. Salceanu}, {\em Robust uniform persistence in discrete and continuous dynamical systems using Lyapunov exponents}, 
Math. Biosci. Eng., vol. 8, 3, pp. 807--825, 2011. 

\bibitem{Sari1}{\sc T. Sari}, {\em A Lyapunov function for the chemostat with variable yields}, C. R. Math. Acad. Sci. Paris, vol. 348, 13--14, pp. 747--751, 2010. 

\bibitem{ST11}{\sc H.L. Smith, H.R. Thieme}, {\em Dynamical systems and population persistence}, Providence, R.I: American Mathematical Society, 2011.

\bibitem{SmithWalt}{\sc H.L.~Smith, P.~Waltman}, {\em The theory of the chemostat, Dynamics of microbial competition}, Cambridge University Press, 1995.

\bibitem{SW99}{\sc H.L.~Smith, P.~Waltman}, {\em Perturbation of a globally stable steady state}, Proc. Amer. Math. Soc., 
vol. 127,  2, pp. 447--453, 1999. 

\bibitem{Thieme92}{\sc H. Thieme}, {\em Convergence results and a Poincar\'e Bendixson trichotomy for asymptotically autonomous differential equations}, J. Math. Biol, vol. 30, pp. 755--763, 1992. 

\bibitem{Thieme93}{\sc H. Thieme}, {\em  Persistence under relaxed point-dissipativity (with application to an endemic model)}, SIAM J. Math. Anal., vol. 24, 2, pp.407--435, 1993. 

\bibitem{Thieme99}{\sc H. Thieme}, {\em Uniform weak implies uniform strong persistence for non-autonomous semiflows}, 
Proc. Amer. Math. Soc., vol. 127, 8, pp. 2395--2403, 1999. 

\bibitem{Wolko1} {\sc G.S.K. Wolkowicz, Z. Lu}, {\em Global dynamics of a mathematical model of competition in the chemostat:  general response functions and differential death rates}, 
SIAM J. Appl. Math., vol. 52, pp. 222--233, 1992. 

\bibitem{YY2016} {\sc Y. Xu, Y. Lai}, {\em Derivatives of functions of eigenvalues and eigenvectors for symmetric matrices}, J. Math. Anal. Appl., 444, pp. 251--274, 2016. 

\end{thebibliography}
\end{document}